%Manuscript of first revision
\documentclass[11pt,a4paper,leqno]{article}

\usepackage[T1]{fontenc}
\usepackage[utf8]{inputenc}
\usepackage[french,german,english]{babel}

\title{Path properties of the solution to the stochastic heat equation with Lévy noise}
\author{Carsten Chong\thanks{Institut de mathématiques, École Polytechnique Fédérale de Lausanne, Station 8, CH-1015 Lausanne, e-mails: carsten.chong@epfl.ch, robert.dalang@epfl.ch
\newline		
\indent \, This research is partially supported by the Swiss National Foundation for Scientific Research.}~, Robert C.~Dalang$^\ast$ and Thomas Humeau$^\ast$}

\usepackage{amsmath}
\usepackage{amsfonts}
\usepackage{amssymb}
\usepackage{multicol}
\usepackage{color}
\usepackage{dsfont}
\usepackage{graphicx}
\usepackage{listings}
\usepackage{hyperref}
\usepackage{comment}
\usepackage{amsthm}
\usepackage{soul}
\usepackage{stmaryrd}
\usepackage{fancyhdr}
\usepackage{comment}
\usepackage{enumitem}
\setenumerate{leftmargin=*}
\allowdisplaybreaks

\addtolength{\hoffset} {-1.8cm}
\addtolength{\textwidth} {3.6cm}
\addtolength{\voffset} {-2cm}
\addtolength{\textheight} {3cm}

\newcommand{\hs}{H_{r,{loc}}(\R^d)}
\newcommand{\R}{\mathbb{R}}

\newcommand{\lp}{\left(}
\newcommand{\rp}{\right)}
\newcommand{\rc}{\right ]}
\newcommand{\lc}{\left [}
\newcommand{\N}{\mathbb{N}}
\newcommand{\C}{\mathbb{C}}
\newcommand{\Z}{\mathbb{Z}}
\newcommand{\Q}{\mathbb{Q}}
\newcommand{\bbp}{\mathbb{P}}

\newcommand{\Scd}{\mathcal S(\R^d)}
\newcommand{\I}{\leqslant}
\newcommand{\eps}{\varepsilon}
\newcommand{\s}{\geqslant}

\newcommand{\E}{\mathbb{E}}

\newcommand{\PR}{\mathbb{P}}

\newcommand{\temd}{\mathcal S'(\R^d)}
\newcommand{\m}{\mathcal}

\newcommand{\dd}{\mathrm{d}}

\newcommand{\F}{\mathcal F}
\newcommand{\calp}{\mathcal P}
\newcommand{\cals}{\mathcal S}
\numberwithin{equation}{section}
\newtheorem{theo}{Theorem}[section]
\newtheorem{prop}[theo]{Proposition}
\newtheorem{rem}[theo]{Remark}
\newtheorem{df}[theo]{Definition}
\newtheorem{lem}[theo]{Lemma}

\newcommand{\scal}[2]{\left \langle #1 , #2 \right \rangle }

\DeclareUnicodeCharacter{00A0}{ }

\vspace{1cm}

\begin{document}
	\date{}
	\maketitle

	\begin{abstract}
		We consider sample path properties of the solution to the stochastic heat equation, in $\R^d$ or bounded domains of $\R^d$, driven by a Lévy space--time white noise. When viewed as a stochastic process in time with values in an infinite-dimensional space, the solution is shown to have a càdlàg modification in fractional Sobolev spaces of index less than $-\frac d 2$. Concerning the partial regularity of the solution in time or space when the other variable is fixed, we determine critical values for the Blumenthal--Getoor index of the Lévy noise such that noises with a smaller index entail continuous sample paths, while Lévy noises with a larger index entail sample paths that are unbounded on any non-empty open subset. Our results apply to additive as well as multiplicative Lévy noises, and to light- as well as heavy-tailed jumps.
	\end{abstract}
	
	\vfill
	
	\noindent
		{\em AMS 2010 Subject Classifications:} ~~ 60H15, 60G17, 60G51, 60G52 \\

	\vspace{1cm}
	
	\noindent
	{\em Keywords:}
	stochastic PDEs; càdlàg modification; Lévy noise; sample path properties; stable noise
	
	\vspace{0.5cm}
	
	\newpage

\section{Introduction}

Let $T>0$ and consider, on a stochastic basis $(\Omega,\mathcal F,(\mathcal F_t)_{t\in[0,T]},\bbp)$ satisfying the usual conditions, 
the stochastic heat equation driven by a Lévy space--time white noise on $[0,T]\times D$ with Dirichlet boundary conditions:
\begin{equation}
	\label{SHE}
	\left\{\begin{array}{ll} \frac{\partial u}{\partial t}(t,x) = \Delta u(t,x)+\sigma(u(t,x)) \dot L(t, x)\, , & (t,x) \in (0,T)\times D \, ,\\ u(t,x)=0\, , & \text{for all } (t,x)\in [0,T]\times\partial D \, ,  \\
		u(0,x)=u_0(x)\, , & \text{for all } x\in D\, , \end{array}\right.
\end{equation}
where $D$ is the whole space $\R^d$ or a bounded domain in $\R^d$, $\sigma\colon \R\to \R$ is a Lipschitz function, $u_0\colon \bar D\to\R$ is a bounded continuous initial condition vanishing on $\partial D$, and $\dot L$ is a Lévy space--time white noise. If $D=\R^d$, the boundary conditions on $u$ and $u_0$ are considered void. 

A predictable random field 
$u=\lp u(t,x)\colon (t,x) \in [0,T]\times D \rp$ is called a \emph{mild solution} to \eqref{SHE} if for all $(t,x) \in  [0,T]\times D$,
\begin{equation}
	\label{mildsolution}
	u(t,x)=V(t,x)+\int_0^t \int_D G_D(t-s;x,y) \sigma(u(s,y)) \,L(\dd s , \dd y)\, 
\end{equation}
almost surely, where 
\begin{equation}\label{initial-condition}
	V(t,x)=\int_D G_D(t;x,y)u_0(y)\,\dd y\,,\quad (t,x)\in[0,T]\times D\,,
\end{equation}
is the solution to the homogeneous version of \eqref{SHE}. %As usual, we identify mild solutions that are modifications of each other.

In \eqref{mildsolution} and \eqref{initial-condition}, $G_D$ denotes the \emph{Green's function} of the heat operator on $D$, which for $D=\R^d$ equals the Gaussian density
\begin{equation} \label{heat-kernel} g(t,x) = (4\pi t)^{-\frac d 2}e^{-\frac{|x|^2}{4t}}\mathds 1_{t\s 0} \end{equation}
(when $t=0$, we interpret $g(0,x)$ as the Dirac delta function $\delta_0(x)$),
while on a bounded domain $D$ with smooth boundary it has the spectral representation
\begin{equation}\label{GD-spectral}
	G_D(t;x,y)=\sum_{j\s 1} \Phi_j(x) \Phi_j (y) e^{-\lambda_j t}\mathds 1_{t\s 0}\, ,  \qquad \text{for all} \ x,y \in D\, ,
\end{equation}
where $(\lambda_j)_{j\s 1}$ are the eigenvalues of $-\Delta$ with vanishing Dirichlet boundary conditions, and $(\Phi_j)_{j\s 1}$ are the corresponding eigenfunctions forming a complete orthonormal basis of $L^2(D)$.

In the special case where $\dot L$ is a \emph{Gaussian} noise, the existence, uniqueness and regularity of solutions to Equation~\eqref{SHE} have been extensively studied in the literature, see e.g.\ \cite{bally,chen_dalang,davar, spdewalsh} for the case of space--time white noise, \cite{dalang99, sanzsole02, sanzsole03} for noises that are white in time but colored in space, and \cite{nualart_huang} for noises that may exhibit temporal covariances as well. In all cases, the mild solution to \eqref{mildsolution} is jointly locally Hölder continuous in space and time, with exponents that depend on the covariance structure of the noise.

By contrast, suppose that $\dot L$ is a \emph{Lévy space--time white noise without Gaussian part}, that is, 
\begin{equation}\label{noise}
	\begin{aligned}
		L( \dd t, \dd x)&= b\,\dd t\,\dd x+\int_{|z|\I 1} z\, \tilde J(\dd t, \dd x, \dd z)+\int_{|z|> 1} z  \,J(\dd t, \dd x, \dd z)\\
		&=:L^B(\dd t,\dd x)+L^M(\dd t, \dd x) +L^P(\dd t , \dd x)\, ,
	\end{aligned}
\end{equation}
where  {$b\in\R$}, $J$ is an $(\mathcal F_t)_{t\in[0,T]}$-Poisson random measure on $[0,T]\times D \times \R$ with intensity $\dd t \, \dd x \, \nu(\dd z)$, and $\tilde J$ is the compensated version of $J$. Here $\nu$ is a
\emph{Lévy measure}, that is, $\nu(\{0\}) =0$ and $\int_\R \lp z^2\wedge 1 \rp \,\nu(\dd z)<+\infty$, and we assume that $\nu$ is not identically zero. The existence and uniqueness of solutions for equations like \eqref{SHE} with Lévy noise have been investigated in \cite{ Balan14, Balan15, Chong2016, Chongheavytailed, peszat, loubert}.

Already in the linear case with $\sigma(x)\equiv1$, due to the singularity of the Green's kernel on the diagonal $x=y$ near $t=0$, each jump of the noise creates a Dirac mass for the solution. Even worse, if $\nu(\R)=\infty$, these space--time jump points form a dense subset of $[0,T]\times D$. Hence one cannot expect the solution to have any continuity properties jointly in space and time. 

In this article, we thus take two different viewpoints and consider
\begin{enumerate}
	\item the path properties of $t\mapsto u(t,\cdot)$ as a process with values in an infinite-dimensional space;
	\item the path properties of the partial maps $t\mapsto u(t,x)$ for fixed $x\in D$, and of $x\mapsto u(t,x)$ for fixed $t\in[0,T]$.
\end{enumerate}  

For each $t\s 0$, $u(t,\cdot)$ may take values in $L^p(D)$ for some $p>0$ almost surely, but since
 each atom of the Lévy noise introduces a Dirac delta into the solution, the process $t\mapsto u(t,\cdot)$ cannot have a càdlàg version in such a space (see also \cite{Brzezniak10b} or \cite[Proposition~9.25]{peszat}). Instead, one should consider spaces of distributions containing delta functions, such as negative fractional Sobolev spaces $H_r(D)$ for $r<-\frac d 2$ (see Sections~\ref{sobolevspace}, \ref{frac_sobolev_space} and \ref{fracsobolevdef}). If $\sigma=1$ and the noise has a finite second moment, the existence of a càdlàg modification in such spaces follows from a result of \cite{Kotelenez82} on maximal inequalities for stochastic convolutions in an infinite-dimensional setting, see also \cite[Chapter~9.4.2]{peszat}. This type of result has also been obtained in the case of additive (possibly colored) Lévy noise in \cite{Brzezniak09,Brzezniak10,Peszat13}. To our best knowledge, the question of existence of càdlàg versions in the case of multiplicative noise has only been studied in \cite{Hausenblas05}. For the relation of the results of this paper to our results, see Remark~\ref{remark}. 
 
 In Section~\ref{cadlag} of this paper, we substantially generalize the aforementioned results in the case of a Lévy space--time white noise \eqref{noise}: Without any further assumptions than those required for the existence of solutions, we prove in Theorems~\ref{cadlagsolution-general}, \ref{cadlagsolution} and \ref{cadlag_bdd_domain}, for both a bounded domain $D$ and the case $D=\R^d$, that $t\mapsto u(t,\cdot)$ has a càdlàg modification in $H_r(D)$ and $H_{r,loc}(\R^d)$, respectively, for any $r<-\frac d 2$. 
% Let us remark that there is an important difference between \eqref{SHE} on $\R^d$ compared to the same equation on a bounded domain. In order to understand this, let us consider the stopping times
% \begin{equation}\label{tauN} \tau_N=\inf\left \{ t\in [0,T]\colon J\lp [0,t]\times D \times [-N,N]^c \rp\neq 0\right \},\end{equation}
% with the convention $\inf \emptyset = +\infty$. In the case of a bounded domain $D$, since $J$ has a finite intensity on $[0,T]\times D\times [-N,N]^c$, the sequence $(\tau_N)_{N\s1}$ is almost surely strictly positive, increasing and equal to  $+\infty$ for large $N$. In particular, for $\omega$ outside a set of probability $0$, the path $t\mapsto u(t,\cdot)(\omega)$ coincides with the path $t\mapsto u_N(t,\cdot)(\omega)|_{N=N(\omega)}$, where $N(\omega)\in\N$ is the smallest $N$ such that $\omega\in\{\tau_N=+\infty\}$, and $u_N$ is the solution to \eqref{SHE} where we remove all jumps of size larger than $N$ from $\dot L$. Thus, in order to analyze the path properties of $t\mapsto u(t,\cdot)$, there is no loss of generality if we assume that the jumps of $\dot L$ are bounded. 
% 
% This argument, however, breaks down on $D=\R^d$. Unless $\nu$ has bounded support, the intensity of $J$ on $[0,T]\times\R^d\times[-N,N]^c$ is infinite for all $N\in\N$ such that we have $\tau_N=0$ almost surely for all $N\in\N$. In particular, on any time interval $[0,\eps]$ where $\eps>0$, we already have infinitely many jumps of arbitrarily large size. This is also the reason why Example~2.3 in \cite{Hausenblas05} is not valid.
To this end, we start our analysis by considering the stochastic heat equation on the interval $D=[0,\pi]$ in Section~\ref{bddinterval}. Treating this basic case first has the advantage that we can directly proceed to the main steps of the proof while avoiding the technical difficulties of the general case. Next, in Section~\ref{equation_whole_space}, we demonstrate how the proof for $D=[0,\pi]$ can be directly extended to the case $D=\R^d$, provided $\sigma$ is bounded and $\nu$ has finite second moments. But in order to cover the general case of Lipschitz continuous $\sigma$ and heavy-tailed noises, we need to use stopping time techniques from \cite{Chongheavytailed} to deal with the (infinitely many) large jumps of the noise, as well as results from the integration theory for general random measures (see the Appendix) to compensate the absence of finite second moments for $d\s 2$, due to the singularity of the heat kernel and the small jumps of the noise. Finally, the proof for $D=[0,\pi]$ does not extend to bounded domains in $\R^d$ with $d\s 2$ because the eigenfunctions are typically no longer uniformly bounded. Instead, the proof we give in Section~\ref{equation_bdd_domain} makes use of the fact that in the interior of $D$, the Green's function $G_D$ can be decomposed into the Gaussian density $g$ (where we can use the results of Section~\ref{equation_whole_space}) and a smooth function. With the methods reviewed in the Appendix, we also obtain sufficient control at the boundary of $D$.

Regarding the partial regularity of $t\mapsto u(t,x)$ and $x\mapsto u(t,x)$, \cite[Section~2]{loubert} obtained the following result on $D=\R^d$: If the Lévy measure $\nu$ of $L$ satisfies $\int_\R |z|^p \,\nu(\dd z)<+\infty$ for some $p<\frac 2 d$, then for fixed $t$, the process $x\mapsto u(t,x)$ has a continuous modification. Similarly, if $\int_\R |z|^p \,\nu(\dd z)<+\infty$ for some $p<1$, then there exists a continuous modification of $t\mapsto u(t,x)$ for every fixed $x$.  Extending the results of \cite{loubert}, our Theorems~\ref{continuityspacewholespace} and \ref{fixedspacewholespace}, which also apply to bounded domains, show that it suffices to check whether $\int_{[-1,1]} |z|^p \,\nu(\dd z)<+\infty$ is finite, which would include, for example, $\alpha$-stable noises with $\alpha<\frac 2 d$ (for spatial regularity) and $\alpha<1$ (for temporal regularity). Furthermore, these conditions are essentially sharp as we show in Theorems~\ref{label} and \ref{alphafixedspace}: If $\sigma\equiv 1$, and if $\nu$ has the same behavior near the origin as the Lévy measure of an $\alpha$-stable noise, then for $\frac 2 d \I \alpha < 1+\frac 2 d$ (resp. $1\I \alpha<1+\frac 2 d$), the paths of $x\mapsto u(t,x)$ (resp. $t\mapsto u(t,x)$) are unbounded on any non-empty open subset of $D$ (resp. $[0,T]$). Let us remark that the last conclusion was observed  in \cite{Mytnik03} for an $\alpha$-stable noise $\Lambda$ and the (non-Lipschitz) function $\sigma(x)=x^{\frac 1 \alpha}$ with $\alpha\in(1,1+\frac 2 d)$ via a connection between the resulting equation and stable super-Brownian motion (note that our $\alpha$ is $1+\beta$ in this reference).

In what follows, the letter $C$, occasionally with subscripts indicating the parameters that it depends on, denotes a strictly positive finite number whose value may change from line to line.

\section{Regularity of the solution in fractional Sobolev spaces}\label{cadlag}

\subsection{The stochastic heat equation on an interval}\label{bddinterval}

For the interval $D=[0,\pi]$, the Green's function $G=G_D$ has the explicit representation
\begin{equation}
	\label{greenfunction}
	G(t;x,y):=G_D(t; x,y)=\frac 2 \pi \sum_{k\s 1} \sin(kx) \sin(ky) e^{-k^2 t}\mathds 1_{t\s 0}\, .
\end{equation}
The existence and uniqueness of mild solutions to \eqref{SHE} in this case basically follow from  \cite{Chong2016}. 
\begin{prop}
	\label{existence}
	Let $\sigma\colon\R\to \R$ be a Lipschitz function, $u_0\colon[0,\pi]\to\R$ be continuous with $u_0(0)=u_0(\pi)=0$, and $L$ be a pure jump	Lévy white noise as in \eqref{noise}.
	Furthermore, define 
	\begin{equation}\label{tauN}\tau_N=\inf\left \{ t\in [0,T] \colon J\lp [0,t]\times[0,\pi]\times[-N,N]^c\rp\neq 0\right \}\, ,\quad N\in\N\,,\end{equation}
	with the convention $\inf \emptyset = +\infty$.
	Then $(\tau_N)_{N\s1}$ is an increasing sequence of stopping times such that $\tau_N>0$ and $\tau_N = +\infty$ for large values of $N$. In addition, up to modifications, \eqref{SHE} has a mild solution $u$ satisfying 
	\begin{equation}\label{p-moment}
		\sup_{(t,x)\in [0,T]\times [0,\pi]}\E\lc |u(t,x)|^p \mathds 1_{t\I \tau_N}\rc <+\infty\, ,
	\end{equation}
	for any $0<p<3$ and $N\in\N$. Furthermore, up to modifications, this solution is unique among all predictable random fields that satisfy \eqref{p-moment}.
%	\begin{equation}\label{solSHE}
%		u(t,x)=\int_0^t \int_{0}^\pi G(t-s;x,y) \sigma(u(s,y)) L(\dd s , \dd y)\qquad \text{a.s.}
%	\end{equation}
\end{prop}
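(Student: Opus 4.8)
The plan is to reduce the statement to a fixed-point argument handled by known results, modulo the localization by the stopping times $\tau_N$. First I would verify the claims about $(\tau_N)$: each $\tau_N$ is a stopping time because $J$ is an $(\mathcal F_t)$-Poisson random measure, and the sets $[0,t]\times[0,\pi]\times[-N,N]^c$ are increasing in $t$, so the infimum defining $\tau_N$ is indeed a stopping time; monotonicity in $N$ is immediate since the exceptional sets shrink as $N$ grows. Since $\nu$ is a Lévy measure, $\nu(\{|z|>N\})<\infty$ for every $N$, so $J([0,T]\times[0,\pi]\times[-N,N]^c)$ is a Poisson random variable with finite mean; hence for $N$ large enough this random variable is zero on an event of probability close to one, and more precisely, because the total number of jumps of size exceeding $1$ on $[0,T]\times[0,\pi]$ is a.s. finite, there is an a.s.\ finite (random) $N_0$ such that $\tau_N=+\infty$ for all $N\ge N_0$; in particular $\tau_N>0$ a.s.\ for each $N$ since a Poisson process has no jump at time $0$.

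Next I would set up the equation ``below $\tau_N$''. On the stochastic interval $[0,\tau_N]$ the noise $L$ coincides with its truncated version $L_N$ obtained by discarding all jumps of size $>N$, which has all moments; in particular the jump part has finite $p$-th moments for every $p>0$, and the Green's function $G$ on $[0,\pi]$ satisfies the standard integrability bound $\int_0^t\int_0^\pi G(s;x,y)^q\,\dd y\,\dd s<\infty$ for $q<3$, reflecting the $\tfrac{1}{2}$-spatial, $\tfrac14$-temporal regularity of the heat kernel on an interval. This is exactly the regime in which the existence-uniqueness theory of \cite{Chong2016} applies: one runs a Picard iteration in the Banach space of predictable random fields with norm $\sup_{(t,x)}\big(\E[|u(t,x)|^p\mathds 1_{t\le\tau_N}]\big)^{1/p}$ for a suitable $p\in(2,3)$, using the Lipschitz property of $\sigma$ together with the Burkholder-Davis-Gundy / Kunita-type moment inequality for stochastic integrals against $L_N$ and the bound $\int_0^t\int_0^\pi G(t-s;x,y)^q\,\dd y\,\dd s\le C\,t^{\gamma}$ with $\gamma>0$ to get a contraction on short time intervals, then patching. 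The boundedness of $u_0$ guarantees $\sup_{t,x}|V(t,x)|<\infty$, so $V$ contributes no obstruction to \eqref{p-moment}.

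The one genuinely new point beyond citing \cite{Chong2016} is consistency of the localized solutions: the solutions constructed below $\tau_N$ for different $N$ must agree on the overlap $[0,\tau_N]$ when $N\le M$, which follows from the uniqueness part of the fixed-point argument together with the fact that $L$ and $L_M$ coincide on $[0,\tau_N]$; since $\tau_N\uparrow+\infty$ a.s., these pieces glue to a single predictable random field $u$ on $[0,T]\times[0,\pi]$ solving \eqref{mildsolution}, and \eqref{p-moment} holds for every $N$ and every $p<3$ by construction. Uniqueness among predictable fields satisfying \eqref{p-moment} is then obtained by a Gronwall-type argument applied to $\E[|u^{(1)}(t,x)-u^{(2)}(t,x)|^p\mathds 1_{t\le\tau_N}]$, again exploiting the contraction estimate on small time increments and letting $N\to\infty$.

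I expect the main obstacle to be bookkeeping rather than depth: carefully justifying that the stochastic integral in \eqref{mildsolution} is well defined under only the moment condition \eqref{p-moment} with $p<3$ (not $p=2$), which forces one to work with the $L^p$-theory of stochastic integrals against the martingale measure $L^M$ and the pathwise (finite-variation) treatment of $L^P$ on $[0,\tau_N]$, and to check that the relevant moment inequalities interact correctly with the time-singularity $\int_0^t\int_0^\pi G(t-s;x,y)^q\,\dd y\,\dd s\le C t^{\gamma}$. Once the right norm and the right form of the stochastic-integral inequality are in place, the contraction and the localization are routine, and the statement follows.
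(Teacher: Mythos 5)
Your overall strategy is the same as the paper's: reduce to the truncated noise on $\{t\le\tau_N\}$, invoke the fixed-point machinery of \cite{Chong2016}, and glue the localized solutions. The elementary facts about $(\tau_N)_{N\ge1}$ are handled exactly as in the paper (finitely many jumps of size $>1$ on the bounded set $[0,T]\times[0,\pi]$, hence an a.s.\ finite random level beyond which $\tau_N=+\infty$), and the localization/gluing/uniqueness bookkeeping you describe is indeed routine.

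However, the one step you defer as ``the right form of the stochastic-integral inequality'' is precisely the only thing the paper actually has to prove, and you do not supply it. \cite[Theorem~3.5]{Chong2016} delivers \eqref{p-moment} only for $p\in(0,2]$; to reach all $p<3$ one must extend the moment estimate \cite[Lemma~6.1(2)]{Chong2016} for the convolution against $L^M$ to exponents $2<p<3$. A direct Burkholder--Davis--Gundy bound gives
$\E[(\int G^2|\phi_1-\phi_2|^2|z|^2)^{p/2}]$, which does not close into a contraction in the norm $\sup_{t,x}\E[|\cdot|^p]$ without further work. The paper's argument is: apply \cite[Theorem~1]{marinelli} to get the two-term bound \eqref{firstcalc}; on the first term use H\"older's inequality \emph{with respect to the finite measure} $|G(t-s;x,y)|^2\,\dd s\,\dd y$ (finite because $G\le Cg$ and $g\in L^2([0,T]\times\R)$ in $d=1$) to pull the expectation of $|\phi_1-\phi_2|^p$ inside; then use $|G|^2\le|G|+|G|^p$ to arrive at a kernel $|G|+|G|^p$, which is integrable for $p<3$. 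Without this (or an equivalent) computation, your contraction in the $p\in(2,3)$ norm is not justified, so the claim \eqref{p-moment} for $p>2$ remains unproved in your write-up even though the rest of the architecture is correct.
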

\begin{proof}
	Since $[0,\pi]$ is a bounded interval, almost surely, there is only a finite number of jumps larger than $N$ in $[0,T]\times [0,\pi]$. This immediately implies the statements about $(\tau_N)_{N\s1}$. Next, by \cite[(B.5)]{bally}, we know that $G(t;x,y)\I Cg (t,x-y)$ for any $(t,x,y)\in [0,T]\times[0,\pi]^2$, with $g$ as in \eqref{heat-kernel}. Consequently, $(1)$ to $(4)$ of Assumption B of \cite{Chong2016} are satisfied, and we can apply \cite[Theorem~3.5]{Chong2016} to obtain the existence of a unique mild solution to \eqref{SHE} satisfying \eqref{p-moment} for all $p\in(0,2]$. In order to extend this to all $p\in(2,3)$, we notice that the only step in the proof of \cite[Theorem~3.5]{Chong2016} that uses $p\I 2$ is the moment estimate (6.9) given in \cite[Lemma~6.1(2)]{Chong2016} with respect to the martingale part $L^M$. We now elaborate how this estimate can be extended to exponents $2< p<3$. For predictable processes $\phi_1$ and $\phi_2$,
%	\begin{align*}
%		&\E\left[ \left|\int_0^t\int_{0}^\pi G(t-s;x,y)\left(\sigma(\phi_1(s,y))-\sigma(\phi_2(s,y))\right) \,L_N(\dd s,\dd y)\right|^p \right]\\
%		&\qquad \I C \E\left[\left|\int_0^t\int_{0}^\pi G(t-s;x,y)\left(\sigma(\phi_1(s,y))-\sigma(\phi_2(s,y))\right) \,\dd s\,\dd y\right|^p \right]\\
%		&\qquad\qquad+C\E\left[ \left|\int_0^t\int_{0}^\pi \int_{|z|\I N} G(t-s;x,y)\left(\sigma(\phi_1(s,y))-\sigma(\phi_2(s,y))\right) z\, \tilde J(\dd s,\dd y,\dd z)\right|^p \right]\, .
%	\end{align*}
	%We apply Hölder's inequality to the first integral (with respect to the the measure $G(t-s;x,y)\,\dd s\, \dd y$) and 
	we can use \cite[Theorem~1]{marinelli} to get the upper bound
	\begin{equation}\label{firstcalc}	\begin{aligned}
		&\E\left[ \left|\int_0^t\int_{0}^\pi \int_{|z|\I N} G(t-s;x,y)\left(\sigma(\phi_1(s,y))-\sigma(\phi_2(s,y))\right) \,\tilde J (\dd s,\dd y,\dd z)\right|^p \right]\\
		&\qquad \I C\E\left[\left( \int_0^t \int_0^\pi \int_{|z|\I N} |G(t-s;x,y)|^2 |\phi_1(s,y)-\phi_2(s,y)|^2 |z|^2 \,\dd s\,\dd y \,\nu(\dd z) \right)^{\frac{p}{2}}\right]\\
		&\qquad\quad\qquad+C\E\left[ \int_0^t \int_0^\pi \int_{|z|\I N} |G(t-s;x,y)|^p |\phi_1(s,y)-\phi_2(s,y)|^p |z|^p \,\dd s\,\dd y \,\nu(\dd z) \right]\, .
	\end{aligned}\end{equation}
	Using Hölder's inequality with respect to the measure $|G(t-s;x,y)|^2\,\dd s\, \dd y$, the first term is further bounded by
	\begin{align*}
		%& C \left( \int_0^t\int_{0}^\pi |G (t-s;x,y)|\,\dd s\,\dd y \right)^{p-1}\int_0^t\int_0^\pi |G(t-s;x,y)| \E[|\phi_1(s,y)-\phi_2(s,y)|^p]\, \dd s\,\dd y\\
C_N\left( \int_0^t\int_{0}^\pi |G(t-s;x,y)|^2\,\dd s\,\dd y \right)^{\frac{p}{2}-1}  \int_0^t \int_0^\pi  |G(t-s;x,y)|^2 \E[|\phi_1(s,y)-\phi_2(s,y)|^p]\,\dd s\,\dd y 
		%&\qquad+C\int_0^t \int_0^\pi |G(t-s;x,y)|^p \E[|\phi_1(s,y)-\phi_2(s,y)|^p]  \,\dd s\,\dd y
	\end{align*}
	%with constants that are independent of $\phi_1$ and $\phi_2$.
	Since $G(t;x,y)\I Cg (t,x-y)$ for any $(t,x,y)\in [0,T]\times[0,\pi]^2$, and $\int_0^T \int_\R g^2(t,x) \,\dd t\,\dd x<+\infty$, we obtain the following estimate from \eqref{firstcalc} and the simple inequality $|x|^2 \I |x|+|x|^p$ for all $p\s2$:
%	\begin{align*}
%		&\qquad C \left( \int_0^t\int_{0}^\pi g (t-s,x-y)\,\dd s\,\dd y \right)^{p-1}\int_0^t\int_0^\pi |G(t-s;x,y)| \E[|\phi_1(s,y)-\phi_2(s,y)|^p]\, \dd s\,\dd y\\
%		&\qquad \quad+C\left( \int_0^t\int_{0}^\pi g^2(t-s,x-y)\,\dd s\,\dd y \right)^{\frac{p}{2}-1}  \int_0^t \int_0^\pi  |G(t-s;x,y)|^2 \E[|\phi_1(s,y)-\phi_2(s,y)|^p]\,\dd s\,\dd y \\
%		&\qquad\quad+C\int_0^t \int_0^\pi |G(t-s;x,y)|^p \E[|\phi_1(s,y)-\phi_2(s,y)|^p]  \,\dd s\,\dd y\, .
%	\end{align*}
%	Because $|x|^2 \I |x|+|x|^p$ for all $p\s2$, we finally obtain the upper estimate
	\begin{equation*}
		\begin{aligned}
			&\E\left[ \left|\int_0^t\int_{0}^\pi \int_{|z|\I N} G(t-s;x,y)\left(\sigma(\phi_1(s,y))-\sigma(\phi_2(s,y))\right) \,\tilde J(\dd s,\dd y,\dd z)\right|^p \right]\\
			&\qquad \I C_N\int_0^t \int_{0}^\pi (|G(t-s;x,y)|+|G(t-s;x,y)|^p)\E[|\phi_1(s,y)-\phi_2(s,y)|^p]  \,\dd s\,\dd y\,.
		\end{aligned}
	\end{equation*}
Since $\int_0^T g^p(t,x)\,\dd t\,\dd x < +\infty$ for $p<3$ (see e.g.\ \cite[(3.20)]{Chong2016}), this is exactly the extension of \cite[Lemma~6.1(2)]{Chong2016} needed to complete the proof of \cite[Theorem~3.5]{Chong2016}. 
\end{proof}

%Also, 
%\begin{equation}\label{finiteexpectation}
% \E\lc J\lp [0,t]\times[0,\pi]\times[-N,N]^c\rp\rc=\int_0^t\int_0^\pi\int_\R \mathds 1_{|z|>N} \dd s \dd y \nu(\! \dd z) <+\infty\, .
%\end{equation}
%In fact, we have that for almost all $\omega\in \Omega$, there exists an integer $R(\omega)$ such that for any $N\s R(\omega)$, $\tau_N(\omega)> T$. We use these stopping times to truncate the noise, and we can define $L_N:= L\mathds 1_{t\I \tau_N}$. Then, 

%%%%%%%%%%%%%%%%%%%%%%%%%%%%%%%%%%%%%%%%%%%%
%%%%%%%%%%%%%%%%%%%%%%%%%%%%%%%%%%%%%%%%%%%%

\subsubsection{The fractional Sobolev spaces $H_r([0,\pi])$}\label{sobolevspace}
For any function $f\in L^2([0,\pi])$, we can define its Fourier sine coefficients 
\begin{equation}\label{sinecoef}
 a_n(f)=\sqrt{\frac 2 \pi} \int_0^\pi f(x) \sin(nx) \,\dd x\, , \qquad n\in \N\, .
\end{equation}
Then, by Parseval's identity, $\|f\|^2_{L^2([0,\pi])}=\sum_{n\s 1}a_n(f)^2$.
For any $r\s 0$, we define 
\begin{equation*}
H_r([0,\pi]) :=\left\{ f\in L^2([0,\pi])\colon \| f \|^2_{H_r} := \sum_{n\s 1}\lp 1+n^2\rp ^r a_n(f)^2 <+\infty\right\}\, .
\end{equation*}
This is a Hilbert space for the inner product 
$\scal f h _{H_r}:= \sum_{n\s 1} \lp 1+n^2\rp ^r a_n(f) a_n(h)$.
For $r>0$, we define $H_{-r}([0,\pi])$ as the dual space of $H_{r}([0,\pi])$, that is, the space of continuous linear functionals on $H_{r}([0,\pi])$. Then $H_{-r}([0,\pi])$ is isomorphic to the space of sequences $b=(b_n)_{n\s 1}$ such that
\begin{equation*}
\| b \|^2_{H_{-r}}:= \sum_{n\s 1}\lp 1+n^2\rp ^{-r} b_n^2 <+\infty\, . 
\end{equation*}
More precisely, for $r>0$ and $\tilde f\in H_{-r}([0,\pi])$, the coefficients $b_n$ are given by $b_n= \tilde f\big(\sqrt{\frac 2 \pi}\sin(n\cdot)\big)$. Then, $\| \tilde f\|_{H_{-r}} = \| b \|_{H_{-r}}$ and the duality between $H_{-r}([0,\pi])$ and $H_{r}([0,\pi])$ is given by
$$\scal{b}{h}=\sum_{n\s 1} b_n a_n(h)\I \|b\|_{H_{-r}} \, \|h\|_{H_r}\, .$$
For example, it is easy to check that $\delta_x \in H_r([0,\pi])$ for any $x\in (0,\pi)$ and $r<-\frac 1 2$. Indeed, $\delta_x(\sin(n\cdot))=\sin(nx)$, and for any $r<-\frac 1 2$,
$$ \|\delta_x\|_{H_{r}}^2= \frac{2}{\pi}\sum_{n\s 1}\lp 1+n^2\rp ^{r} \sin^2(nx) \I \frac{2}{\pi}\sum_{n\s 1}\lp 1+n^2\rp ^{r}<+\infty\, .$$

%%%%%%%%%%%%%%%%%%%%%%%%%%%%%%%%%%%%%%%%%%%%
%%%%%%%%%%%%%%%%%%%%%%%%%%%%%%%%%%%%%%%%%%%%

\subsubsection{Existence of a càdlàg solution in $H_r([0,\pi])$ with $r<-\frac 1 2$}\label{cadlag-interval}
{In order to motivate why we consider fractional Sobolev spaces $H_r([0,\pi])$ with $r<-\frac 1 2$, we start with a special case.} Suppose that $b=0$ and that $\nu$ is a symmetric measure with $\nu(\R)<+\infty$. Then
we can rewrite $L=\sum_{i=1}^{N_T} Z_i \delta_{(T_i, X_i)}$, where $(T_i,X_i,Z_i)$ are the atoms of the Poisson random measure $J$, and
$$u(t,x)=\sum_{i= 1}^{N_T} G(t-T_i; x, X_i)  \sigma( u(T_i, X_i) )Z_i\, .$$
In this case, it suffices to check whether for fixed $i\s 1$, $t\mapsto G(t-T_i; \cdot, X_i)$ is càdlàg in $H_r([0,\pi])$. Using the series representation \eqref{greenfunction},
we immediately see that the function $x\mapsto G(t-T_i; x,X_i)$ belongs to $H_r([0,\pi])$ if and only if 
$$\sum_{k\s 1}(1+k^2)^r  \sin(kX_i)^2 e^{-2k^2 (t-T_i)}\mathds 1_{t\s T_i} <+\infty\, .$$
This is the case for any $r\in \R$ if $t\neq T_i$. However, for $t\downarrow T_i$, we have to restrict to $r<-\frac 1 2$. Indeed, for the càdlàg property, the only point where a problem might appear is at $t=T_i$. At this point, the existence of a left limit is obvious since $G(t-T_i; \cdot, X_i)=0$ for any $t<T_i$. For right-continuity, we use the fact that $(1-e^{-k^2 h})^2\I k^{2\eps}h^\eps$ for any $0<\eps< -\frac 1 2 -r$, so
\begin{align*}
 \left \| G(h; \cdot, X_i)-G(0; \cdot, X_i)\right \|_{H_r}^2 &=\frac 2 \pi \sum_{k\s 1}(1+k^2)^r  \sin(kX_i)^2 \lp 1-e^{-k^2 h}\rp^2\\
 &\I \frac 2 \pi \sum_{k\s 1}(1+k^2)^r  k^{2\eps}h^\eps \I C h^\eps\to 0 \qquad \text{as } h\to 0\, .
\end{align*}
Therefore, $t\mapsto u(t,\cdot)$ is càdlàg in  $H_r([0,\pi])$. 
For the general case, we first treat the drift term.
\begin{lem}\label{driftinterval}
	Assume that $u$ is the unique solution to \eqref{SHE} as in Proposition~\ref{existence}.
	{Then, 
	$$F(t,x)=V(t,x)+\int_0^t\int_0^\pi G(t-s;x,y)\sigma(u(s,y))\,\dd s\,\dd y  $$
is jointly continuous in $(t,x)\in[0,T]\times[0,\pi]$. 
	 In particular, for every $r\I 0$, the process $t\mapsto F(t,\cdot)$
	is continuous in $H_r([0,\pi])$.}
\end{lem}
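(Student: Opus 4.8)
The plan is to prove that $F$ is \emph{jointly} continuous on the compact set $[0,T]\times[0,\pi]$; the final assertion then follows at once, since for $r\I0$ one has $(1+n^2)^r\I1$, hence $\|f\|_{H_r}\I\|f\|_{L^2([0,\pi])}$, so uniform continuity of $F$ gives $\sup_{x}|F(t,x)-F(t_0,x)|\to0$ as $t\to t_0$, and therefore $\|F(t,\cdot)-F(t_0,\cdot)\|_{H_r}\I\|F(t,\cdot)-F(t_0,\cdot)\|_{L^2}\to0$. Write $F=V+H$ with $H(t,x):=\int_0^t\int_0^\pi G(t-s;x,y)\sigma(u(s,y))\,\dd s\,\dd y$. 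The continuity of $V$ is classical: since $u_0$ is continuous on $[0,\pi]$ with $u_0(0)=u_0(\pi)=0$, the function $V$ solves the homogeneous heat equation with Dirichlet boundary conditions, is smooth for $t>0$, and $V(t,\cdot)\to u_0$ uniformly as $t\downarrow0$; alternatively one reads this off the series representation \eqref{greenfunction} together with $G(t;x,y)\I Cg(t,x-y)$ and dominated convergence.

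For $H$, I would first fix $\omega$ outside a null set and observe that $h(s,y):=\sigma(u(s,y))$ belongs to $L^2([0,T]\times[0,\pi])$ almost surely. Indeed, since $\sigma$ is Lipschitz, $|h(s,y)|\I C(1+|u(s,y)|)$, so by \eqref{p-moment} with $p=2$ we get $\E\bigl[\int_0^T\int_0^\pi|h(s,y)|^2\mathds 1_{s\I\tau_N}\,\dd s\,\dd y\bigr]<+\infty$ for every $N$; since $\tau_N=+\infty$ for $N$ large by Proposition~\ref{existence}, it follows that $\int_0^T\int_0^\pi|h(s,y)|^2\,\dd s\,\dd y<+\infty$ a.s. It therefore suffices to prove the \emph{deterministic} statement: for every $h\in L^2([0,T]\times[0,\pi])$, the map $(t,x)\mapsto\int_0^t\int_0^\pi G(t-s;x,y)h(s,y)\,\dd s\,\dd y$ is jointly continuous on $[0,T]\times[0,\pi]$.

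For this deterministic statement I would use $G(t;x,y)\I Cg(t,x-y)$ together with $\int_0^T\int_\R g(s,z)^q\,\dd s\,\dd z<+\infty$ for all $q<3$ (see e.g.\ \cite[(3.20)]{Chong2016}), in particular for $q=2$, the conjugate exponent of $p=2$. By Hölder's inequality this gives the uniform bound $\sup_{(t,x)}\bigl|\int_0^t\int_0^\pi G(t-s;x,y)h(s,y)\,\dd s\,\dd y\bigr|\I C\|h\|_{L^2([0,T]\times[0,\pi])}$, and hence $\sup_{(t,x)}|H^{h}(t,x)-H^{h_n}(t,x)|\I C\|h-h_n\|_{L^2}$ whenever $h_n\to h$ in $L^2$, where $H^f$ denotes the convolution with forcing $f$. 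Choosing the $h_n$ continuous, each $H^{h_n}$ is jointly continuous: splitting $\int_0^t=\int_{(t-\eps)_+}^t+\int_0^{(t-\eps)_+}$, the near-diagonal piece is bounded by $C\|h_n\|_\infty\int_0^\eps\int_\R g(s,z)\,\dd s\,\dd z=C\|h_n\|_\infty\eps$ uniformly in $(t,x)$, while in the remaining piece one has $t-s\s\eps$, so $G$ stays bounded and continuous and dominated convergence applies. Thus $H^{h_n}$ is a uniform limit of continuous functions, hence continuous, and then so is $H=H^{h}$. Combining with the continuity of $V$ finishes the proof. The only real obstacle is this deterministic convolution lemma: one must control the singularity of $G$ on the diagonal near $s=0$, which is exactly why the integrability exponent is restricted (one needs $h\in L^p$ with $p>3/2$, comfortably covered by the $p=2$ moment bound of Proposition~\ref{existence}) and why the approximation-by-continuous-functions step cannot be bypassed; everything else is routine.
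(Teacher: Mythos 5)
Your proof is correct, but it follows a genuinely different route from the paper's. The paper expands the convolution term using the spectral representation \eqref{greenfunction}, observes that each Fourier mode $\sin(kx)\int_0^t\int_0^\pi e^{-k^2(t-s)}\sin(ky)\sigma(u(s,y))\,\dd s\,\dd y$ is jointly continuous, and establishes uniform convergence of the series by showing that the expectation of $\sum_k\sup_{(t,x)}|\cdot|$ is finite (Hölder with exponents $\frac53,\frac52$ plus the moment bound \eqref{p-moment}), so joint continuity holds almost surely; the passage to $H_r$ is then the same as yours. You instead bypass the series entirely: you use \eqref{p-moment} with $p=2$ only to get $\sigma(u)\in L^2([0,T]\times[0,\pi])$ a.s., and then prove a purely deterministic lemma — the heat convolution of an $L^2$ forcing is jointly continuous — via the Gaussian domination $G(t;x,y)\I Cg(t,x-y)$, square-integrability of $g$ in $d=1$, and approximation of the forcing by continuous functions, with the near-diagonal singularity handled by splitting off $\int_{(t-\eps)_+}^t$. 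Your argument is essentially the one the paper itself uses later for $D=\R^d$ (Lemma~\ref{drift-bdd}, which invokes \cite[Corollary~3.9.6]{Bogachev} for the same deterministic fact), so it is more robust: it does not rely on the explicit eigenfunction expansion and would transfer to general bounded domains, whereas the paper's series argument is specific to $[0,\pi]$. The paper's version yields the extra (unused) information that the Fourier series of $F(t,\cdot)$ converges uniformly. Both proofs rest on the same two ingredients — the moment bound \eqref{p-moment} and the integrability of powers of $g$ up to exponent $3$ — just deployed in different orders.
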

\begin{proof} The continuity of $V$ is standard, and with \eqref{greenfunction}, the integral term in $F$ equals
$$ \frac{2}{\pi} \sum_{k\s 1} \sin(kx)\int_0^t\int_0^\pi e^{-k^2(t-s)} \sin(ky)\sigma(u(s,y))\,\dd s\,\dd y\, .$$
Each term in this series is jointly continuous in $(t,x)$. Hence, it suffices to show the uniform convergence of the series. Using Hölder's inequality and the fact that $u$ has uniformly bounded moments of any order $p<3$, we obtain this from 
\begin{align*}
	&\E\left[\sum_{k\s 1} \sup_{(t,x)\in[0,T]\times[0,\pi]} \left|\sin(kx) \int_0^t\int_0^\pi e^{-k^2(t-s)} \sin(ky)\sigma(u(s,y))\,\dd s\,\dd y \right|\right]\\
	&\hspace{1cm} \I C \sum_{k\s 1}\E\left[\sup_{(t,x)\in[0,T]\times[0,\pi]} \lp \int_0^t e^{-\frac53 k^2(t-s)} \, \dd s \rp^{\frac 3 5} \lp\int_0^t\int_0^\pi |\sigma(u(s,y))|^{\frac52}\,\dd s\,\dd y\rp^{\frac 2 5}\right]\\
	&\hspace{1cm} \I C\sum_{k\s 1}\lp \int_0^T e^{-\frac53 k^2s}\, \dd s\rp^{\frac 3 5}\left(\int_0^T\int_0^\pi \E[|\sigma(u(s,y))|^\frac52]\,\dd s\,\dd y  \right)^{\frac 2 5} \I C \sum_{k\s 1} {k^{-\frac65}} < +\infty\, .
\end{align*}
Then, to prove the continuity of $t\mapsto F(t,\cdot)$ in $H_r([0,\pi])$, it suffices to show the continuity in $L^2([0,\pi])$ because $L^2([0,\pi]) \hookrightarrow H_r([0,\pi])$ (that is, $L^2([0,\pi])$ is continuously embedded in $H_r([0,\pi])$). The continuity in $L^2([0,\pi])$ in turn follows from the fact that $F$ is uniformly continuous on the compact domain $[0,T]\times[0,\pi]$.
\end{proof}

%\begin{rem}\label{remcoef}
 %The calculation of the Fourier sine coefficients above is in fact valid in a more general case: if $\sigma$ is no longer bounded, but has at most linear growth, then we know by Proposition \ref{existence} that $u$ has uniformly bounded moments of order $2$, and the same reasoning applies.
%\end{rem}

\begin{prop}\label{cadlagsolutioninterval}
	Let $L$ be a pure jump Lévy white noise, and let $\sigma$ {be a bounded and Lipschitz function}. Let $u$ be the mild solution to the stochastic heat equation \eqref{SHE}. Then, for any $r<-\frac 1 2$, the stochastic process $(u(t, \cdot))_{t\in [0,T]}$ has a càdlàg version in $H_r([0,\pi])$. 
\end{prop}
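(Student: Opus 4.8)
The plan is to use the Lévy--Itô decomposition \eqref{noise} and split the mild solution \eqref{mildsolution} as $u=F+\Psi^P+\Psi^M$, where $F(t,x)$ gathers the initial condition $V(t,x)$ together with the drift $b\int_0^t\int_0^\pi G(t-s;x,y)\sigma(u(s,y))\,\dd s\,\dd y$, where $\Psi^P(t,x)=\int_0^t\int_0^\pi\int_{|z|>1}G(t-s;x,y)\sigma(u(s,y))z\,J(\dd s,\dd y,\dd z)$ is the large-jump part, and where $\Psi^M(t,x)=\int_0^t\int_0^\pi\int_{|z|\le 1}G(t-s;x,y)\sigma(u(s,y))z\,\tilde J(\dd s,\dd y,\dd z)$ is the compensated small-jump part. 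It then suffices to prove that each of the three summands, seen as an $H_r([0,\pi])$-valued process, admits a càdlàg modification, and to add them. For $F$ this is immediate from Lemma~\ref{driftinterval} (applied to the drift term), which gives joint continuity in $(t,x)$ and hence continuity in $H_r([0,\pi])$ for every $r\le 0$. For $\Psi^P$, since $[0,\pi]$ is bounded, $J$ has only finitely many atoms $(T_i,X_i,Z_i)$ with $|Z_i|>1$ in $[0,T]\times[0,\pi]\times\R$, so almost surely $\Psi^P(t,\cdot)=\sum_i c_i\,G(t-T_i;\cdot,X_i)$ is a finite sum with $\mathcal F_{T_i}$-measurable, a.s.\ finite scalar coefficients $c_i$ (here the boundedness of $\sigma$ is convenient); by the computation carried out just before Lemma~\ref{driftinterval}, the map $t\mapsto G(t-T_i;\cdot,X_i)$ is càdlàg in $H_r([0,\pi])$ for $r<-\tfrac12$ (it vanishes for $t<T_i$, and $\|G(h;\cdot,X_i)\|_{H_r}^2\le Ch^\eps\to 0$ as $h\downarrow 0$), and a finite sum of càdlàg $H_r([0,\pi])$-valued processes is càdlàg.

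The heart of the matter is the small-jump part $\Psi^M$. I would project onto the Fourier sine basis: a stochastic Fubini argument (legitimate because $G$ is square-integrable on $[0,T]\times[0,\pi]^2$ and $\sigma(u)$ is bounded) shows that the $n$-th sine coefficient of $\Psi^M(t,\cdot)$ is $\sqrt{2/\pi}\,M_n(t)$, where
\[
M_n(t)=\int_0^t\int_0^\pi\int_{|z|\le 1} e^{-n^2(t-s)}\sin(ny)\,\sigma(u(s,y))\,z\,\tilde J(\dd s,\dd y,\dd z)=e^{-n^2 t}\bar\mu_n(t),
\]
with $\bar\mu_n(t)=\int_0^t\int_0^\pi\int_{|z|\le 1}e^{n^2 s}\sin(ny)\sigma(u(s,y))z\,\tilde J(\dd s,\dd y,\dd z)$ a càdlàg square-integrable martingale. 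In particular $t\mapsto M_n(t)$ is càdlàg, so the truncation $\Psi^{M,N}(t,\cdot):=\sum_{n=1}^N\sqrt{2/\pi}\,M_n(t)\sin(n\cdot)$ is a càdlàg $H_r([0,\pi])$-valued process for each $N$. To pass to the limit $N\to\infty$ uniformly in $t$, the key estimate I would prove is the maximal bound
\[
\E\Big[\sup_{t\in[0,T]}M_n(t)^2\Big]\le C\qquad\text{for all }n\in\N,
\]
with $C$ independent of $n$. This follows from an Ornstein--Uhlenbeck-type maximal inequality: $M_n$ solves $\dd M_n(t)=-n^2M_n(t)\,\dd t+\dd\mu_n(t)$ with $\mu_n(t)=\int_0^t\int_0^\pi\int_{|z|\le 1}\sin(ny)\sigma(u(s,y))z\,\tilde J$ a martingale whose predictable quadratic variation is $\le\|\sigma\|_\infty^2\tfrac\pi2\big(\int_{|z|\le1}z^2\,\nu(\dd z)\big)\,t$, a bound free of $n$; applying Itô's formula to $M_n^2$, discarding the nonpositive drift $-2n^2\int_0^t M_n^2$, and combining Burkholder--Davis--Gundy with a Young-inequality absorption yields the claim. (Equivalently one may invoke a Kotelenez-type maximal inequality for the stochastic convolution $\Psi^M(t,\cdot)=\int_0^t S(t-s)\,\dd Z(s)$, where $S$ is the contraction semigroup generated by $\Delta$ on $H_r([0,\pi])$ and $Z(t)=\int_0^t\int_0^\pi\int_{|z|\le1}\sigma(u(s,y))z\,\delta_y\,\tilde J(\dd s,\dd y,\dd z)$ is an $H_r([0,\pi])$-valued square-integrable martingale precisely because $\int_0^\pi\|\delta_y\|_{H_r}^2\,\dd y=\sum_{n\ge1}(1+n^2)^r<\infty$ for $r<-\tfrac12$; see \cite[Ch.~9.4.2]{peszat}.)

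Granting this bound, the conclusion is routine: since $\|\Psi^M(t,\cdot)-\Psi^{M,N}(t,\cdot)\|_{H_r}^2=\tfrac2\pi\sum_{n>N}(1+n^2)^rM_n(t)^2$, interchanging the supremum with the sum gives
\[
\E\Big[\sup_{t\in[0,T]}\big\|\Psi^M(t,\cdot)-\Psi^{M,N}(t,\cdot)\big\|_{H_r}^2\Big]\le C\sum_{n>N}(1+n^2)^r\xrightarrow[N\to\infty]{}0,
\]
because $r<-\tfrac12$ makes $\sum_n(1+n^2)^r$ convergent. Hence $\Psi^{M,N}\to\Psi^M$ uniformly on $[0,T]$ in $H_r([0,\pi])$, almost surely along a subsequence, and a uniform limit of càdlàg functions is càdlàg; so $\Psi^M$ has a càdlàg modification in $H_r([0,\pi])$. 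Adding the three pieces yields a càdlàg modification of $(u(t,\cdot))_{t\in[0,T]}$ in $H_r([0,\pi])$ for every $r<-\tfrac12$.

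I expect the main obstacle to be precisely the maximal bound on $\sup_{t\in[0,T]}M_n(t)^2$ uniformly in $n$. The processes $M_n$ are not martingales — the semigroup factor $e^{-n^2(t-s)}$ depends on the running time $t$ — so Doob's inequality is not directly applicable, and the crude bound $\sup_t|M_n(t)|\le\sup_t|\bar\mu_n(t)|$ is useless, as $\sup_t|\bar\mu_n(t)|$ grows like $e^{n^2T}$. The Itô/BDG-absorption argument (equivalently, the stochastic-convolution maximal inequality) is what extracts the cancellation between $e^{-n^2t}$ and the growth of $\bar\mu_n$, and it is essential that the resulting constant does not depend on $n$, so that the tail series $\sum_{n>N}(1+n^2)^r$ — which is exactly where the restriction $r<-\tfrac12$ enters — can be summed.
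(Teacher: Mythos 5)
Your proof is correct, but it takes a genuinely different route from the paper. The paper reduces to bounded jumps via the stopping times $\tau_N$, then applies the Chentsov--Skorohod criterion of \cite[Chapter III, \S4, Theorem 1]{skorohod}: it establishes continuity in probability plus the product-increment bound $\E[\|u^M(t+h,\cdot)-u^M(t,\cdot)\|_{H_r}^2\,\|u^M(t-h,\cdot)-u^M(t,\cdot)\|_{H_r}^2]\I Ch^{1+\delta}$, which requires fourth-moment estimates \eqref{moment2} for the integrals $I_a^b(k)$ and a four-term case analysis. You instead approximate $\Psi^M$ by the partial sums of its Fourier series and prove a maximal bound $\E[\sup_{t\I T}M_n(t)^2]\I C$ uniform in $n$, so that the tail $\sum_{n>N}(1+n^2)^r\E[\sup_t M_n(t)^2]$ vanishes and the càdlàg property passes through the a.s.\ uniform limit; this needs only second moments of the small jumps and replaces the entire $A_1,\dots,A_4$ computation by one scalar Ornstein--Uhlenbeck/Kotelenez-type inequality (your It\^o--BDG--absorption argument for it is sound, and the a priori finiteness needed for the absorption holds since $\sup_t|M_n(t)|\I\sup_t|\bar\mu_n(t)|\in L^2$). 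Your identification of the key difficulty is exactly right: $M_n$ is not a martingale and the crude bound via $\bar\mu_n$ blows up like $e^{n^2T}$. It is worth noting that the uniform-in-$n$ maximal bound can also be obtained by the paper's own elementary device used later in the proof of Theorem~\ref{cadlagsolution-general} (see \eqref{calc-akn}--\eqref{expectsupinterval}): writing $e^{-n^2(t-s)}=1-\int_s^t n^2e^{-n^2(t-r)}\,\dd r$ and applying the stochastic Fubini theorem gives $|M_n(t)|\I 2\sup_{r\I t}|\mu_n(r)|$, after which Doob's inequality suffices — so your approach is essentially the strategy the authors themselves adopt for the approximation step with unbounded $\sigma$, pushed down to the bounded-$\sigma$ case. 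What the paper's Skorohod route buys is a quantitative increment estimate that transfers verbatim to the $\R^d$ setting of Proposition~\ref{cadlagsolutionbddsigma} by substituting $k\leftrightarrow\xi$ and $\sin(ky)\leftrightarrow e^{-i\xi\cdot y}$; what your route buys is a shorter, second-moment-only argument with explicit càdlàg approximants. Your handling of the drift and large-jump parts is also fine: finitely many large jumps on $[0,T]\times[0,\pi]$ and the computation preceding Lemma~\ref{driftinterval} give the càdlàg property of $\Psi^P$ directly.
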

\begin{proof}
	For $N\in\N$, consider the truncated noise 	
	\begin{equation}\label{LN-interval} L_N(\dd t,\dd x) = b_N\,\dd t\,\dd x + \int_{|z|\I N} z\,\tilde J(\dd t,\dd x,\dd z)\, ,\quad b_N = b-\int_{1<|z|\I N} z\,\nu(\dd z)\,,  \end{equation} 
		as well as the mild solution $u_N$ to \eqref{SHE} driven by $L_N$, that is, 
	\begin{equation}\label{truncatedmildinterval}
		u_N(t,x)=V(t,x)+\int_0^t \int_{0}^\pi G(t-s; x,y) \sigma (u_N(s,y))\, L_N(\dd s , \dd y)\,,\quad (t,x)\in[0,T]\times[0,\pi]\,.
	\end{equation}
 Then, by definition, we have $L_N = L$ and therefore also $u=u_N$ 
on the event $\{T\I \tau_N\}$, where $\tau_N$ was defined in \eqref{tauN}. Since almost surely, $\tau_N=+\infty$ for sufficiently large $N$, we have stationary convergence of the processes $u_N$ in \eqref{truncatedmildinterval} to $u$ (that is, almost surely, for large enough $N$, $u_N(t,x)=u(t,x)$ for all $(t,x)\in[0,T]\times[0,\pi]$).
	As we are interested in sample path properties of the mild solution to \eqref{SHE}, and these properties are identical to those of $u_N$ for sufficiently large $N$, it is enough to consider $u_N$ instead of $u$ in the following. The value of the parameter $N$ has no importance in our study, so we take $N=1$ for simplicity and drop the dependency in $N$. Therefore, it suffices to consider the solution to the integral equation
	\begin{equation}\label{newsolution}
		\begin{aligned}
			u(t,x)&=V(t,x)+{ b\int_0^t \int_0^\pi G(t-s;x,y) \sigma(u(s,y)) \, \dd s \, \dd y}\\
			&\quad \qquad+ \int_0^t \int_0^\pi G(t-s;x,y) \sigma(u(s,y)) \,L^M(\dd s , \dd y)\, ,
		\end{aligned}
	\end{equation}
	in other words, to assume that all jumps of $L$ are bounded by $1$.
	Furthermore, by Lemma \ref{driftinterval}, it only remains to consider the process
	\begin{equation}\label{uM} u^M(t,x)=\int_0^t \int_0^\pi G(t-s;x,y) \sigma(u(s,y)) \,L^M(\dd s , \dd y)\,. \end{equation}	
	
	For this purpose, we need to calculate the  Fourier sine coefficients defined in \eqref{sinecoef}. To lighten the notations, in what follows, we will denote these coefficients by $a_k(t)$. Then, by definition, 
	\begin{equation*}
		a_k(t)= \sqrt{\frac 2 \pi} \int_0^\pi \lp \int_{0}^t \int_0^\pi \int_{|z|\I 1} \sin(kx) G(t-s;x,y) \sigma(u(s,y))z \,\tilde J( \dd s, \dd y , \dd z) \rp \,\dd x\, ,\quad k\s 1\,.
	\end{equation*}
	We want to exchange the stochastic integral and the Lebesgue integral, and because all involved terms are square-integrable, Theorem~\ref{fubini} with $p=2$ allows us to do so. 
	Therefore, 
	\begin{equation}
		\begin{aligned} \label{ak}
			a_k(t)&=  \int_{0}^t \int_0^\pi \int_{|z|\I 1} \sqrt{\frac 2 \pi} \lp  \int_0^\pi \sin(kx) G(t-s;x,y) \,\dd x \rp  \sigma(u(s,y))z  \,\tilde J(\dd s, \dd y , \dd z)\\
			&= \sqrt{\frac 2 \pi} e^{-k^2t}  \int_{0}^t \int_0^\pi   \sigma(u(s,y))  \sin(ky) e^{k^2 s}  \,L^M(\dd s, \dd y )\, .
		\end{aligned}
	\end{equation}
	
	We gather some moment estimates for the family of integrals
	\begin{equation}\label{int}
		%\begin{aligned}
		I_a^b(k):=\int_a^b \int_0^\pi \sin(ky) e^{k^2 s} \sigma(u(s,y)) \,L^M(\dd s , \dd y)\,,%\\
		%	&=\int_a^b \int_0^\pi \int_{|z|\I 1} z \sin(ky) e^{k^2 s} \sigma(u(s,y)) \,\tilde J(\dd s , \dd y, \dd z) \, .
		%	\end{aligned}
	\end{equation}
	where $0 \I a < b\I T$.
	Since $\sigma$ is bounded, we can estimate the second and fourth moments of $I_a^b(k)$ using \cite[Theorem~1]{marinelli}:
	\begin{equation}\label{moment2}
		\begin{aligned}
			\E\lc I_a^b(k)^2\rc &\I C \int_a^b \int_0^\pi \sin^2 (ky) e^{2k^2s} \E\lc \left |\sigma(u(s,y)) \right |^2 \rc \,\dd s \,\dd y \\
			& \I  C \int_a^b e^{2k^2s} \,\dd s= C\frac{ e^{2k^2b}- e^{2k^2a}}{2k^2}\, ,\\
			\E\lc I_a^b(k)^4\rc & \I C \lp  \E\lc \lp \int_a^b \int_0^\pi \sin^2 (ky) e^{2k^2s} \left |\sigma(u(s,y)) \right |^2  \,\dd s\, \dd y\rp^2 \rc \right. \\
			&\quad\qquad\left .\vphantom{\lp \int \rp^2} + \int_a^b \int_0^\pi \sin^4 (ky) e^{4k^2s} \E\lc \left |\sigma(u(s,y)) \right |^4 \rc\, \dd s \,\dd y \rp \\
			&\I  C\left(\frac{ (e^{2k^2b}- e^{2k^2a})^2}{4k^4} +\frac{ e^{4k^2b}- e^{4k^2a}}{4k^2}\right)  \, ,
		\end{aligned}
	\end{equation}
	where $C$ also depends on $\int_{|z|\I 1} z^2\, \nu(\dd z)$ and $\int_{|z|\I 1} z^4\, \nu(\dd z)$, both of which are finite. 
	
	Also, for $0\I a<b\I c <d \I T$, still assuming that $\sigma$ is bounded, 
	\begin{equation}\label{productformula}
		\begin{aligned}
			\E\lc \lp I_a^b(k) I_c^d(j)\rp^2 \rc &= \E\lc \lp  \int_c^d \int_0^\pi I_a^b(k) \sin(jy) e^{j^2 s} \sigma(u(s,y)) \,L^M(\dd s , \dd y)\rp^2 \rc \\
			&\I C \int_c^d \int_0^\pi \E \lc I_a^b(k) ^2  \sigma(u(s,y))^2 \rc  \sin^2(jy) e^{2j^2 s} \,\dd s \,\dd y\\
			&\I C\lp \int_a^b e^{2k^2 s}  \,\dd s\rp \lp \int_c^d e^{2j^2 s}  \,\dd s \rp = C \frac{e^{2k^2b}-e^{2k^2 a}}{2k^2} \frac{e^{2j^2d}-e^{2j^2 c}}{2j^2} \, ,
		\end{aligned}
	\end{equation}
	where we used the fact that $I_a^b(k)$ is $\m F_c$-measurable, and \eqref{moment2} in the second inequality. 
	
	We will use \cite[Chapter III, \S4, Theorem 1]{skorohod} to show the existence of a càdlàg version of $t\mapsto u^M(t,\cdot)$. By \cite[Chapter III, \S2, Theorem~1]{skorohod}, $u$ has a separable version, which is, because of \cite[Theorem~4.7]{Chong2016} and \cite[Lemma~B.1]{bally}, continuous in $L^2(\Omega)$, and therefore $t\mapsto u^M(t,\cdot)$ is continuous in $L^2(\Omega)$ as a process with values in $L^2([0,\pi])$ (and thus in $H_r([0,\pi])$ since $r<-\frac 1 2$). Then it suffices to show that for any $t\in [0,T]$, $u(t, \cdot) \in H_r([0,\pi])$, and that for some $\delta>0$, 
	\begin{equation*}
		\E\lc \|u^M(t+h, \cdot)-u^M(t,\cdot)\|_{H_r}^2 \| u^M(t-h, \cdot)-u^M(t,\cdot)\|_{H_r}^2 \rc \I C h^{1+\delta}
	\end{equation*} 
	for any $h\in(0,1)$. 	By \eqref{moment2}, we have
	$\E\lc a_k^2(t) \rc\I CT$, so for $r<-\frac12$,
	we have $$\sum_{k\s 1} (1+k^2)^r a_k^2(t) <+\infty$$ almost surely, and $u(t,\cdot)\in H_r([0,\pi])$ is proved. Next,
	$$\|u^M(t\pm h, \cdot)-u^M(t,\cdot)\|_{H_r}^2=\sum_{k\s 1} (1+k^2)^r \lp a_k(t\pm h)-a_k(t)\rp^2\,, $$
so 
	using \eqref{ak},
	\begin{equation*}
		\begin{aligned}
			&a_k(t+h)-a_k(t)=-\sqrt{\frac 2 \pi} e^{-k^2t} \lc (1-e^{-k^2h})I_0^t(k)-e^{-k^2h} I_t^{t+h}(k)  \rc\, , \\
			&a_j(t-h)-a_j(t)=-\sqrt{\frac 2 \pi}e^{-j^2(t-h)} \lc (e^{-j^2h}-1)I_0^{t-h}(j)+ e^{-j^2h} I_{t-h}^{t}(j)  \rc\, .
		\end{aligned}
	\end{equation*}
	Therefore, using the classical inequality $(a+b)^2\I 2 (a^2+b^2)$,
	\begin{equation}\label{star}
		\begin{aligned}
			&\|u^M(t+h, \cdot)-u^M(t,\cdot)\|_{H_r}^2  \| u^M(t-h, \cdot)-u^M(t,\cdot)\|_{H_r}^2 \\
			&\qquad \I \frac{4}{\pi^2} \sum_{k,j \s 1} (1+k^2)^r(1+j^2)^r \lp |A_1(j,k)|+|A_2(j,k)|+|A_3(j,k)|+|A_4(j,k)| \rp^2\\
			&\qquad\I C \sum_{k,j \s 1} (1+k^2)^r(1+j^2)^r \lp A_1(j,k)^2+A_2(j,k)^2+A_3(j,k)^2+A_4(j,k)^2 \rp\, ,
		\end{aligned}
	\end{equation}
	for some constant $C$, where 
	\begin{equation*}
		\begin{aligned}
			A_1(j,k)&:= e^{-k^2t}e^{-j^2(t-h)}(1-e^{-k^2h})(e^{-j^2h}-1)I_0^t(k)I_0^{t-h}(j) \, ,\\
			A_2(j,k)&:= e^{-k^2t}e^{-j^2(t-h)}(1-e^{-k^2h})e^{-j^2h}I_0^t(k)I_{t-h}^t(j)\, ,\\
			A_3(j,k)&:= e^{-k^2t}e^{-j^2(t-h)}e^{-k^2h}(e^{-j^2h}-1)I_t^{t+h}(k)I_0^{t-h}(j)\, ,\\
			A_4(j,k)&:= e^{-k^2t}e^{-j^2(t-h)}e^{-k^2h}e^{-j^2h}I_t^{t+h}(k)I_{t-h}^t(j)\, .
		\end{aligned}
	\end{equation*}
	We treat each of the four terms separately. %Intuitively, the last two terms will be the easiest to deal with. Indeed, we can use \eqref{productformula}, which gives an order $h$ from $\E\lc \lp I_t^{t+h}(k)I_0^{t-h}(j)\rp^2 \rc$ and an order $h$ from $(e^{-j^2h}-1)$ for the term $A_3$. Also, \eqref{productformula} gives an order $h^2$ from the term $A_4$. This suggests an order $h^2$ for these two terms. We will see in the following that this is indeed the case.
	
\vspace{0.25\baselineskip}
	
	\noindent
	\underline{$A_1(j,k)$:} 
	\begin{equation*}
		\begin{aligned}
			\E\lc A_1(j,k)^2 \rc &=e^{-2k^2t}e^{-2j^2(t-h)}(1-e^{-k^2h})^2(e^{-j^2h}-1)^2\E\lc ( I_0^t(k)I_0^{t-h}(j)) ^2 \rc \\
			& \I C e^{-2k^2t}e^{-2j^2(t-h)}(1-e^{-k^2h})^2(e^{-j^2h}-1)^2 \\
			& \quad\qquad \times \E\lc ( I_0^{t-h}(k)I_0^{t-h}(j)) ^2 + ( I_{t-h}^t(k)I_0^{t-h}(j)) ^2  \rc\\
			&=: \tilde A_1(j,k) + \tilde A_2(j,k)  \, .
		\end{aligned}
	\end{equation*}
	By \eqref{productformula}, we can write
	\begin{equation*}
		\begin{aligned}
			\E\lc  ( I_{t-h}^t(k)I_0^{t-h}(j)) ^2  \rc &\I C\, \frac{ e^{2j^2(t-h)}-1 }{2j^2}\, \frac{ e^{2k^2t}-e^{2k^2(t-h)}}{2k^2} \\
			&\I  Ce^{2j^2(t-h)}\, \frac{1-e^{-2j^2(t-h)}}{2j^2} e^{2k^2t}\,  \frac{1-e^{-2k^2h}}{2k^2} &\I C\, \frac{e^{2k^2t}e^{2j^2(t-h)}}{2j^2}\, h\, ,
		\end{aligned}
	\end{equation*}
	where we used $1-e^{-2k^2h} \I 2k^2h$ and $1-e^{-2j^2(t-h)} \I 1$ in the last inequality. Since $(1-e^{-k^2h})^2\I 1$ and $(1-e^{-j^2h})^2\I j^2 h$,  we deduce that
	\begin{equation}\label{A2}
		\tilde A_2(j,k)\I C h^2\, .
	\end{equation}
	Also, by the Cauchy-Schwarz inequality,
	\begin{equation}\label{B2}
		\begin{aligned}
			\E\lc  ( I_{0}^{t-h}(k)I_0^{t-h}(j)) ^2  \rc &\I  \E\lc  I_{0}^{t-h}(k)^4 \rc^{\frac 1 2}  \E\lc  I_{0}^{t-h}(j)^4 \rc^{\frac 1 2}\, .
		\end{aligned}
	\end{equation}
	By \eqref{moment2} and subadditivity of the square root,
	\begin{equation}\label{order4}
		\begin{aligned}
			&\E\lc  I_{0}^{t-h}(k)^4 \rc^{\frac 1 2}%&\I  C \lp \lp \int_0^{t-h}\int_0^\pi e^{2k^2s}\, \dd s \,\dd y \rp^2 + \int_0^{t-h}\int_0^\pi e^{4k^2s} \,\dd s\, \dd y \rp^{\frac 1 2}\\ 
			\I C\Bigg(\frac{e^{2k^2(t-h)}-1}{2k^2}+ \Bigg( \frac{e^{4k^2(t-h)}-1}{4k^2} \Bigg)^{\frac 1 2} \Bigg)\\
			&\qquad\I C e^{2k^2t} \Bigg(\frac{e^{-2k^2h}-e^{-2k^2t}}{2k^2}+ \Bigg( \frac{e^{-4k^2h}-e^{-4k^2t}}{4k^2} \Bigg)^{\frac 1 2} \Bigg) \I  C e^{2k^2t} \lp\frac{1}{2k^2}+  \frac{1}{2k}  \rp  \, .
		\end{aligned}
	\end{equation}
	Let $0<\delta<\frac 3 2$, to be chosen later. Then, multiplying each term by $(1-e^{-k^2h})^2$ and using $(1-e^{-k^2h})^2\I  k^{2} h$ for the first term, and
$(1-e^{-k^2h})^2=(1-e^{-k^2h})^{\frac 1 2 +\delta}(1-e^{-k^2h})^{\frac 3 2 -\delta} \I  k^{1+2\delta} h^{\frac 1 2+\delta}$
	for the second term of the sum, we get
	\begin{equation}\label{B3}
		\begin{aligned}
			(1-e^{-k^2h})^2  \E\lc  I_{0}^{t-h}(k)^4 \rc^{\frac 1 2}&\I C e^{2k^2t} \lp h + k^{2\delta} h^{\frac 1 2 +\delta}  \rp\, .
		\end{aligned}
	\end{equation}
	A similar calculation yields 
	\begin{equation}\label{B4}
		\begin{aligned}
			(1-e^{-j^2h})^2\E\lc  I_{0}^{t-h}(j)^4 \rc^{\frac 1 2} \I C e^{2j^2(t-h)} \lp h + j^{2\delta} h^{\frac 1 2 +\delta}  \rp\, .
		\end{aligned}
	\end{equation}
	Then, we combine \eqref{B2}, \eqref{B3} and \eqref{B4} to obtain 
	\begin{equation}\label{A3}
		\tilde A_1(j,k)\I C \lp h^2+j^{2\delta}k^{2\delta} h^{1+2\delta}\rp\, .
	\end{equation}
	Therefore, \eqref{A2} and \eqref{A3} give
$$\E\lc A_1(j,k)^2 \rc \I C\lp h^2+ j^{2\delta}k^{2\delta} h^{1+2\delta}\rp\,.$$
	
\vspace{0.25\baselineskip}
	
	\noindent
	\underline{$A_2(j,k)$:} We treat this term in a similar way to $A_1(j,k)$:
	\begin{equation*}
		\begin{aligned}
			\E\lc A_2(j,k)^2\rc &= e^{-2k^2t}e^{-2j^2(t-h)}(1-e^{-k^2h})^2e^{-2j^2h}\E\lc ( I_0^t(k)I_{t-h}^t(j)) ^2 \rc\\
			&\I C e^{-2k^2t}e^{-2j^2(t-h)}(1-e^{-k^2h})^2e^{-2j^2h}\E\lc (  I_0^{t-h}(k)I_{t-h}^{t}(j)) ^2+ ( I_{t-h}^t(k)I_{t-h}^t(j)) ^2 \rc \\
			&=: B_1(j,k)+ B_2(j,k)\, .
		\end{aligned}
	\end{equation*}
	In the same way as for the term $\tilde A_2(j,k)$, we get
	\begin{equation}\label{indepterm}
		B_1(j,k)\I Ch^2 \, .
	\end{equation}
	We use the Cauchy-Schwarz inequality to deal with the term $B_2(j,k)$:
	\begin{equation*}
		\begin{aligned}
			B_2(j,k)\I C e^{-2k^2t}e^{-2j^2(t-h)}(1-e^{-k^2h})^2e^{-2j^2h} \E\lc  I_{t-h}^{t}(k)^4 \rc^{\frac 1 2}  \E\lc  I_{t-h}^{t}(j)^4 \rc^{\frac 1 2}\, .
		\end{aligned}
	\end{equation*}
	As in \eqref{order4}, we get
	\begin{align*}
		\E\lc  I_{t-h}^{t}(j)^4 \rc^{\frac 1 2} &\I C e^{2j^2t} \Bigg(\frac{1-e^{-2j^2h}}{2j^2}+ \Bigg( \frac{1-e^{-4j^2h}}{4j^2} \Bigg)^{\frac 1 2} \Bigg) \I C e^{2j^2t} \lp h+ \sqrt h \rp\, ,
	\end{align*}
and similarly $\E\lc  I_{t-h}^{t}(k)^4 \rc^{\frac 1 2} \I C e^{2k^2t} \lp h+ \sqrt h \rp$.
	Also, for $0<\delta <1$, since $(1-e^{-k^2h})^2\I  k^{2\delta}h^\delta$, 
	\begin{equation}\label{bterm2}
		B_2(j,k)\I C k^{2\delta} h^{\delta} \lp h+ \sqrt h \rp^2 \I C k^{2\delta} h^{1+\delta}\, .
	\end{equation}
	By \eqref{indepterm} and \eqref{bterm2},
	\begin{equation*}
		\E\lc A_2(j,k)^2\rc \I C k^{2\delta}h^{1+\delta} \, .
	\end{equation*}
	
\vspace{0.25\baselineskip}
	
	\noindent
	\underline{$A_3(j,k)$:} By \eqref{productformula},
	\begin{equation*}
		\begin{aligned}
			\E\lc A_3(j,k)^2\rc &=  e^{-2k^2t}e^{-2j^2(t-h)}e^{-2k^2h}(e^{-j^2h}-1)^2 \E \lc ( I_t^{t+h}(k)I_0^{t-h}(j)) ^2 \rc \\
			&\I C e^{-2k^2t}e^{-2j^2(t-h)}e^{-2k^2h}(e^{-j^2h}-1)^2 \, \frac{e^{2j^2(t-h)}-1}{2j^2} \, \frac{e^{2k^2(t+h)}-e^{2k^2t}}{2k^2} \\
			&\I C \, (e^{-j^2h}-1)^2 \, \frac{1-e^{-2j^2(t-h)}}{2j^2} \, \frac{1-e^{-2k^2h}}{2k^2} \I C \, \frac{(e^{-j^2h}-1)^2}{2j^2}\, \frac{1-e^{-2k^2h}}{2k^2}  \, .
		\end{aligned}
	\end{equation*}
	Then, since $(1-e^{-j^2h})^2 \I j^2h$ and $1-e^{-2k^2h} \I 2k^2h$, we get
	\begin{equation*}
		\E\lc A_3(j,k)^2\rc \I C h^2 \, .
	\end{equation*} 
	
\vspace{0.25\baselineskip}
	
	\noindent
	\underline{$A_4(j,k)$:} Again, by \eqref{productformula},
	\begin{equation*}
		\begin{aligned}
			\E\lc A_4(j,k)^2\rc &= e^{-2k^2t}e^{-2j^2(t-h)}e^{-2k^2h}e^{-2j^2h}\E \lc ( I_t^{t+h}(k)I_{t-h}^t(j))^2\rc \\
			&\I C e^{-2k^2t}e^{-2j^2(t-h)}e^{-2k^2h}e^{-2j^2h}\, \frac{e^{2j^2t}-e^{2j^2(t-h)}}{2j^2}\, \frac{e^{2k^2(t+h)}-e^{2k^2t}}{2k^2} \\
			&\I C\, \frac{1-e^{-2j^2h}}{2j^2}\, \frac{1-e^{-2k^2h}}{2k^2}\, .
		\end{aligned}
	\end{equation*}
	Therefore, as for the previous term we get
	\begin{equation*}
		\E\lc A_4(j,k)^2\rc \I C h^2 \, .
	\end{equation*} 
	Then, for every $r<-\frac 1 2$, we can pick $0<\delta<1$ such that $r+\delta<-\frac 1 2$. Then,
	\begin{equation*}
		\E\lc \|u^M(t+h, \cdot)-u^M(t,\cdot)\|_{H_r}^2 \| u^M(t-h, \cdot)-u^M(t,\cdot)\|_{H_r}^2\rc \I C h^{1+\delta}\, ,
	\end{equation*}
	so we deduce that $(u^M(t,\cdot))_{t\s 0}$ has a càdlàg version in $H_r([0,\pi])$ for any $r<-\frac 1 2$.
\end{proof}

\begin{rem}\label{extensionrk}
 The result of Proposition \ref{cadlagsolutioninterval} is in fact valid for any predictable random field $u$ whose Fourier sine coeficients can be written in the form
 $$a_k(u(t,\cdot)) = C  e^{-k^2t} \int_0^t\int_0^\pi \sin(ky) e^{k^2s} Z(s,y)\,  {L(\dd s , \dd y)} \, ,$$
where $Z$ is another predictable and bounded random field.
\end{rem}

For unbounded $\sigma$, we deduce the result from Proposition \ref{cadlagsolutioninterval} via an approximation argument.
\begin{theo}\label{cadlagsolution-general}
Let $u$ be the mild solution to the stochastic heat equation \eqref{SHE} constructed in Proposition~\ref{existence}. Then, for any $r<-\frac 1 2$, the  process $(u(t, \cdot))_{t\in [0,T]}$ has a càdlàg version in $H_r([0,\pi])$. 
\end{theo}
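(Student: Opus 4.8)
The plan is to deduce this from Proposition~\ref{cadlagsolutioninterval} by approximating the (possibly unbounded) coefficient $\sigma(u)$ in the stochastic convolution by bounded ones. As a preliminary reduction, exactly as at the start of the proof of Proposition~\ref{cadlagsolutioninterval}, the stationary convergence $u_N\to u$ lets us assume that all jumps of $L$ are bounded by $1$; then the $\tau_N$-cutoff in \eqref{p-moment} disappears and $\sup_{(t,x)\in[0,T]\times[0,\pi]}\E[|u(t,x)|^p]<+\infty$ for every $p<3$. In particular $\sigma(u)\in L^2(\Omega\times[0,T]\times[0,\pi])$ and, since $\sigma$ is Lipschitz, $\sup_{(s,y)}\PR(|\sigma(u(s,y))|>m)\le C_p\,m^{-p}$ for every $p<3$. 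Moreover, as in the proof of Proposition~\ref{cadlagsolutioninterval}, the non-stochastic part $F$ of $u$ is continuous in $H_r([0,\pi])$ by Lemma~\ref{driftinterval}, so it is enough to produce a càdlàg modification of the process $u^M$ in \eqref{uM}.

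For $m\in\N$ set $Z_m(s,y):=(-m)\vee\sigma(u(s,y))\wedge m$, a bounded predictable random field with $Z_m\to\sigma(u)$ in $L^2(\Omega\times[0,T]\times[0,\pi])$, and let $w_m$ be the random field whose Fourier sine coefficients are $a_k(w_m(t,\cdot))=\sqrt{2/\pi}\,e^{-k^2 t}\int_0^t\int_0^\pi Z_m(s,y)\sin(ky)e^{k^2 s}\,L^M(\dd s,\dd y)$, as in \eqref{ak}. By Remark~\ref{extensionrk} each $w_m$ has a càdlàg modification $\tilde w_m$ in $H_r([0,\pi])$ for every $r<-\tfrac12$, and a second-moment computation as in \eqref{moment2} gives $\sup_{t\in[0,T]}\E[\|w_m(t,\cdot)-u^M(t,\cdot)\|_{H_r}^2]\to0$ as $m\to\infty$. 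The key point is that $v_m:=\tilde w_{m+1}-\tilde w_m$ is again of the form in Remark~\ref{extensionrk}, with coefficient $Z_{m+1}-Z_m$, which is bounded by $1$ and vanishes off $\{|\sigma(u(s,y))|>m\}$; hence $\sup_{(s,y)}\E[|Z_{m+1}(s,y)-Z_m(s,y)|^q]\le\sup_{(s,y)}\PR(|\sigma(u(s,y))|>m)\le C_p\,m^{-p}$ for all $q\ge2$. Rerunning the moment estimates from the proof of Proposition~\ref{cadlagsolutioninterval} for $v_m$ --- bounding $|Z_{m+1}-Z_m|$ by $1$ wherever a deterministic coefficient bound is needed, and extracting the smallness from the $\E[I_a^b(k)^2]$- and $\E[I_a^b(k)^4]$-type terms, each of which now picks up a factor $C_p\,m^{-p}$ --- yields, for $r<-\tfrac12$ and a suitable $\delta>0$,
\begin{equation*}
	\E\big[\|v_m(t+h,\cdot)-v_m(t,\cdot)\|_{H_r}^2\,\|v_m(t-h,\cdot)-v_m(t,\cdot)\|_{H_r}^2\big]\le C_p\,m^{-p}\,h^{1+\delta},\qquad h\in(0,1).
\end{equation*}

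This two-increment bound is precisely of the type handled by the chaining argument underlying Skorokhod's criterion \cite[Chapter~III, \S4, Theorem~1]{skorohod}; since $v_m$ vanishes at $t=0$, it yields for every $q\in(1,2)$ the maximal estimate $\E[\sup_{t\in[0,T]}\|v_m(t,\cdot)\|_{H_r}^q]\le C_p\,m^{-pq/2}$. Taking $p\in(2,3)$, the series $\sum_{m\s1}\E[\sup_{t\in[0,T]}\|\tilde w_{m+1}(t,\cdot)-\tilde w_m(t,\cdot)\|_{H_r}]$ converges, so almost surely $(\tilde w_m)_{m\s1}$ converges uniformly on $[0,T]$ to a process that is càdlàg in $H_r([0,\pi])$. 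Since $w_m(t,\cdot)\to u^M(t,\cdot)$ in $L^2(\Omega;H_r([0,\pi]))$ for each $t$, this limit is a modification of $u^M$, and therefore $u=F+u^M$ has a càdlàg modification in $H_r([0,\pi])$ for every $r<-\tfrac12$.

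I expect the passage from the truncated coefficients back to $u$ itself to be the main obstacle. The tempting shortcut --- that $u$ coincides, up to a stopping time $\tau_n\uparrow+\infty$, with the solution built from a truncation $\sigma_n$ of $\sigma$ --- is not available here, because each jump of $L$ instantaneously adds a Dirac mass to $u(t,\cdot)$, so $u$ is pathwise unbounded as a function of $(t,x)$ and there is no state-space exit time to localize by. One is therefore forced to truncate $\sigma(u)$ inside the convolution, and the pointwise-in-$t$ $L^2$-convergence of the truncations has to be upgraded to almost sure uniform-in-$t$ convergence. The mechanism that makes this work is exactly that applying the estimates of Proposition~\ref{cadlagsolutioninterval} to the increments $Z_{m+1}-Z_m$ produces the small factor $\PR(|\sigma(u)|>m)^{1/2}$, whose summability over $m$ --- guaranteed by the (only available) moments of order $<3$ --- closes the telescoping. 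Keeping these constants explicit, and checking that bounding $|Z_{m+1}-Z_m|$ by $1$ in the product-formula steps (rather than by $\mathds 1_{\{|\sigma(u)|>m\}}$) extracts the smallness without costing any power of $h$, is where care is needed.
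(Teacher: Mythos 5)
Your overall strategy --- truncate the coefficient $\sigma(u)$ inside the stochastic convolution, invoke Proposition~\ref{cadlagsolutioninterval} and Remark~\ref{extensionrk} for the truncated processes, and upgrade pointwise-in-$t$ $L^2$-convergence to almost sure uniform-in-$t$ convergence --- is exactly the paper's, and you correctly identify the uniform convergence as the crux. However, the mechanism you propose for it has a genuine gap. From the two-increment bound $\E[\|v_m(t+h)-v_m(t)\|_{H_r}^2\|v_m(t-h)-v_m(t)\|_{H_r}^2]\I C_p m^{-p}h^{1+\delta}$, the Skorokhod/Billingsley chaining gives a tail bound of order $\lambda^{-4}\cdot m^{-p}$ for $\sup_t\min(\|v_m(t)-v_m(0)\|,\|v_m(T)-v_m(t)\|)$ (note the $\min$: an extra endpoint moment bound is needed to control $\sup_t\|v_m(t)\|$ itself), and hence $\E[\sup_t\|v_m(t,\cdot)\|_{H_r}^q]\I C\,m^{-pq/4}$, not $m^{-pq/2}$. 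The exponent $q/2$ you claim is inconsistent with the homogeneous scaling: if $v_m=\eps_m X$ then the two-increment quantity scales like $\eps_m^4$ while $\sup_t\|v_m\|^q$ scales like $\eps_m^q$, forcing the power $q/4$ on the constant. Since the only available smallness per telescoping step is a single factor $\PR(|\sigma(u)|>m)\I C_pm^{-p}$ with $p<3$ (the increment $Z_{m+1}-Z_m$ is indicator-like, so all its moments are of order $m^{-p}$, not powers thereof), the resulting series $\sum_m m^{-p/4}$ diverges, and the telescoping does not close.

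The paper's proof sidesteps this entirely: writing $e^{-k^2(t-s)}=1-\int_s^t k^2e^{-k^2(t-r)}\,\dd r$ and applying the stochastic Fubini theorem, the Fourier coefficient of $u^M(t,\cdot)-u^M_n(t,\cdot)$ is bounded, uniformly in $k$ and $t$, by $C\sup_{r\in[0,t]}|\int_0^r\int_0^\pi\sin(ky)\sigma_{(n)}(s,y)\,L^M(\dd s,\dd y)|$, a running supremum of a genuine $L^2$-martingale. Doob's maximal inequality then gives $\E[\sup_{t\in[0,T]}\|u^M(t,\cdot)-u^M_n(t,\cdot)\|_{H_r}^2]\to0$ directly (summing over $k$ with weights $(1+k^2)^r$, $r<-\frac12$), and a subsequence converges uniformly almost surely --- no rate in $n$ is needed, so the quantitative obstruction you run into never arises. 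If you want to salvage your route, you should replace the two-increment/chaining step by this Doob-type maximal bound applied to $u^M-w_m$ itself rather than to the telescoped differences.
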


\begin{rem} The constraint $r<-\frac12$ in Theorem~\ref{cadlagsolution-general} is optimal. This follows from the discussion at the beginning of Section~\ref{cadlag-interval} and the fact that the Dirac delta distribution $\delta_a$, $a\in(0,\pi)$, does not belong to $H_s([0,\pi])$ for any $s\s-\frac12$.
%Let $D$ be a bounded and smooth domain $D\subset \R^d$, or $D=\R^d$. In the case of a very simple noise with only one deterministic jump of size one at position $(s,y)$, the solution $u$ to the stochastic heat equation on $D$ with this simple noise can be written as 
%$$u(t,x)=G_D (t-s; x,y)\, , \qquad (t,x)\in [0,T]\times D\,$$
%where $G_D$ is the Green's function for the heat operator on the domain $D$ with Dirichlet boundary conditions. Then, at $t=s$, we can formally write $u(s,\cdot)=C \delta_y$, where $\delta_y$ is the Dirac distribution at $y$. Then, it is easy to show that $\delta_y$ does not belong to any fractional Sobolev space of order $r\s -\frac d 2$. Therefore, the constraint $r<-\frac 1 2$ in Theorem \ref{cadlagsolution-general} is optimal, and the constraints $r<-\frac d 2$ in Theorems \ref{cadlagsolution} and \ref{cadlag_bdd_domain} are also optimal.
\end{rem}

\begin{proof}[Proof of Theorem \ref{cadlagsolution-general}]
By the argument given at the beginning of the proof of Proposition~\ref{cadlagsolutioninterval} and by Lemma \ref{driftinterval}, we only need to consider $u^M$ as defined in \eqref{uM}. Let $\sigma_n(u)= \sigma(u)\mathds 1_{|u|\I n}$. We define
 $$u^M_n(t,x)=\int_0^t \int_0^\pi G(t-s;x,y) \sigma_n(u(s,y)) \,L^M(\dd s , \dd y) \, .$$
As in \eqref{ak}, the Fourier sine coefficients of $t\mapsto u^M(t, \cdot)-u^M_n(t,\cdot)$ are given by
% 
% By the stochastic Fubini Theorem used in \eqref{seriesrepresentation} \hl{need finite measure, check} (see \cite[Theorem 5]{fubiniapplebaum}), for $(t,x)\in [0,T]\times[0,\pi]$,
%\begin{equation*}
% u(t,x)-u_n(t,x)= \int_0^t \int_0^\pi G(t-s;x,y)\lp \sigma(u(s,y))- \sigma_n(u(s,y)) \rp L^M(\dd s , \dd y)=\sum_{k\s 1} \sin(kx)a_{k,n}(t)\, ,
%\end{equation*}
%where
\begin{align*}
 a_{k,n}(t)=\sqrt{\frac 2 \pi} \int_0^t \int_0^\pi \sin(ky) e^{-k^2(t-s)}(  \sigma(u(s,y))- \sigma_n(u(s,y))  )\, L^M(\dd s, \dd y) \, .
\end{align*}
 Therefore, for any $t\in [0,T]$,
\begin{align}\label{hsnorm2interval}
 \| u^M(t,\cdot)-u^M_n(t,\cdot)\|_{H_r}^2=\sum_{k\s 1} (1+k^2)^r  a_{k,n}^2(t)\, .
\end{align}
Then, using
$e^{-k^2(t-s)}=1-\int_s^t k^2 e^{-k^2(t-r)}\,\dd r$
and Theorem~\ref{fubini} and \eqref{p-est} with $p=2$, we can rewrite 
\begin{align*}
 a_{k,n}(t) &=\sqrt{\frac 2 \pi}\lp \int_0^t \int_0^\pi \sin(ky) \sigma_{(n)} (s,y) \,L^M(\dd s, \dd y)\right . \\
 &\quad \qquad\left . - \int_0^t \int_0^\pi \sin(ky) \lp \int_s^t k^2 e^{-k^2(t-r)}\,\dd r\rp \sigma_{(n)} (s,y)\,L^M(\dd s, \dd y) \rp \\
 &=\sqrt{\frac 2 \pi}\lp \int_0^t \int_0^\pi \sin(ky) \sigma_{(n)} (s,y)\, L^M(\dd s, \dd y)\right . \\
 &\quad \qquad\left . - \int_0^t  k^2 e^{-k^2(t-r)} \lp \int_0^r \int_0^\pi \sin(ky) \sigma_{(n)} (s,y)\,L^M(\dd s, \dd y) \rp \, \dd r \rp \, ,
\end{align*}
where $\sigma_{(n)} (s,y):= \sigma(u(s,y))- \sigma_n(u(s,y))$. Therefore,
\begin{equation}\label{calc-akn}
\begin{aligned}
\left | a_{k,n}(t) \right | %&\I \sqrt{\frac 2 \pi}\left | \int_0^t \int_0^\pi \sin(ky) \sigma_{(n)} (s,y) \,L^M(\dd s, \dd y) \right | \\
%&\quad\qquad + \sqrt{\frac 2 \pi}\int_0^t k^2 e^{-k^2(t-r)} \left | \int_0^r \int_0^\pi \sin(ky) \sigma_{(n)} (s,y)\,L^M(\dd s, \dd y) \right |  \,\dd r \\
&\I C \sup_{r \in [0,t]} \left | \int_0^r \int_0^\pi \sin(ky) \sigma_{(n)} (s,y)\, L^M(\dd s, \dd y) \right |   \lp 1+ \int_0^t k^2 e^{-k^2(t-r)}\,\dd r \rp \\
&\I C \sup_{r \in [0,t]} \left | \int_0^r \int_0^\pi \sin(ky) \sigma_{(n)} (s,y)\,L^M(\dd s, \dd y) \right |  \, ,
\end{aligned}
\end{equation}
where $C$ does not depend on $k$. So by Doob's inequality, we deduce that
\begin{equation}\label{expectsupinterval}
 \E \lc \sup_{t\in [0,T]} a_{k,n}^2(t) \rc \I C \int_0^T \int_0^\pi \sin^2(ky) \E \lc \sigma_{(n)}^2 (s,y) \rc \dd s \,\dd y\, .
\end{equation}
By \eqref{p-moment}, \eqref{hsnorm2interval} and \eqref{expectsupinterval}, it follows from dominated convergence that for any $r<-\frac 1 2$,
\begin{equation*}
	\begin{aligned}
	 \E \lc \sup_{t\in [0,T]}  \| u(t,\cdot)-u_n(t,\cdot)\|_{H_r}^2\rc &\I C \sum_{k\s 1} (1+k^2)^r    \int_0^T \int_0^\pi \sin^2(ky) \E \lc \sigma_{(n)}^2 (s,y) \rc \dd s \,\dd y\to 0
	\end{aligned}
\end{equation*}
as $n\to+\infty$.
%Furthermore, by Hölder's inequality and Markov's inequality,
%\begin{equation}\label{inequality}
% \begin{aligned}
% \E \lc \sigma_{(n)}^2 (s,y) \rc &=\E \lc \sigma(u(s,y)) ^2 \mathds 1_{|u(s,y)| >n} \rc \I \E \lc \sigma(u(s,y)) ^{\frac 8 3} \rc^{\frac 3 4} \lc \PR \lp  |u(s,y)| >n \rp \rc^{\frac 1 4}\\
% &\I C\lp 1+ \E \lc \left | u(s,y)\right | ^{\frac 8 3} \rc \rp^{\frac 3 4} \frac{ \E \lc \left | u(s,y)\right | ^{\frac 8 3} \rc^{\frac 1 4}}{n^{\frac 2 3}} \I \frac{C}{n^{\frac 2 3}} \lp 1+ \E \lc \left | u(s,y)\right | ^{\frac 8 3} \rc \rp \, .
%\end{aligned}
%\end{equation}
%By Proposition \ref{existence}, the solution $u$ has uniformly bounded moments in space and time of any order $p<3$, so we deduce that
%\begin{equation}\label{sigmainterval}
%  \E \lc \sigma_{(n)}^2 (s,y) \rc \I \frac{C}{n^{\frac 2 3}}\, ,
%\end{equation}
%for some constant $C$. Inserting this in \eqref{expectsupinterval}, we obtain
%\begin{equation*}
%\begin{aligned}
%  \E \lc \sup_{t\in [0,T]} a_{k,n}^2(t) \rc & \I \frac{C}{n^{\frac 2 3}}\, .
%\end{aligned}
%\end{equation*}
%By \eqref{hsnorm2interval}, we deduce that for any $r<-\frac 1 2$,
%\begin{equation*}
%\begin{aligned}
% \E \lc \sup_{t\in [0,T]}  \| u(t,\cdot)-u_n(t,\cdot)\|_{H_r}^2\rc &\I \sum_{k\s 1} (1+k^2)^r    \E \lc \sup_{t\in [0,T]} a_{k,n}^2(t) \rc \I \frac{C}{n^{\frac 2 3}} \to 0
%\end{aligned}
%\end{equation*}
%as $n\to +\infty$.
Therefore, $ \sup_{t\in [0,T]}  \| u(t,\cdot)-u_n(t,\cdot)\|_{H_r}\to 0$ in $L^2(\Omega)$ as $n\to +\infty$, and there is a subsequence $(n_k)_{k\s 0}$ such that $\sup_{t\in [0,T]}  \| u(t,\cdot)-u_{n_k}(t,\cdot)\|_{H_r}\to 0$ almost surely as $k\to +\infty$. This means that $u_{n_k}(t,\cdot)$ converges to $u(t,\cdot)$ in $H_r([0,\pi])$ uniformly in time for any $r<-\frac 1 2$. Since $\sigma_{n_k}$ is bounded, $t \mapsto u_{n_k}(t,\cdot)$ has  a càdlàg version in $H_r([0,\pi])$ by Proposition \ref{cadlagsolutioninterval} and Remark~\ref{extensionrk}. Therefore, $t \mapsto u(t,\cdot)$ has a càdlàg version in $H_r([0,\pi])$ for any $r<-\frac 1 2$. 
\end{proof}

\begin{comment}
\begin{rem}
The result of Theorem \ref{cadlagsolution-general} is in fact valid for any random field $u$ that can be written in the form
$$u(t,x) = v(t,x)+  \int_0^t\int_0^\pi G(t-s;x,y) Z(s,y)  \,L(\dd s , \dd y) \, ,$$
where $v$ is continuous on $[0,T]\times[0,\pi]$, and $Z$ is a predictable random field such that for some $p>2$,
\[ \sup_{(t,x)\in[0,T]\times[0,\pi]} \E\lc |Z(t,x)|^p\mathds{1}_{t\I \tau_N} \rc<+\infty \]
for every $N\in \N$, with $\tau_N$ as defined in \eqref{tauN}. %Indeed, as before it is enough to show the result for 
%$$u_N(t,x) = C  \int_0^t\int_0^\pi G(t-s;x,y) Z(s,y)  L_N(\dd s , \dd y) \, ,$$
%and then restrict to $N=1$ and $b=0$ using Lemma \ref{driftinterval}. Replacing $\sigma_{(n)}$ by $Z_{(n)}:=Z\mathds 1_{|Z|>n}$ in the proof of Theorem \ref{cadlagsolution-general}, we can use Hölder's inequality as in \eqref{inequality} to get	 
%\begin{equation*}
%\begin{aligned}
% \E\lc Z_{(n)}^2(s,y)\mathds 1_{s\I \tau_1}  \rc & \I \lp \E\lc Z^p(s,y)\mathds 1_{s\I \tau_1}  \rc \rp^{\frac 2 p} \lc \PR\lp Z(s,y)  \mathds 1_{s\I \tau_1} >n \rp \rc^{\frac{p-2}{p}} \\
 %&\I  \frac{\E\lc Z^p(s,y)\mathds 1_{s\I \tau_1}  \rc}{n^{p-2}}\\
 %&\I \frac{\sup_{(t,x)\in[0,T]\times\R} \E\lc |Z(t,x)|^p\mathds{1}_{t\I \tau_1} \rc}{n^{p-2}}\, .
%\end{aligned}
%\end{equation*}
%This bound replaces \eqref{sigmainterval} and the remainder of the proof carries through.
\end{rem}
\end{comment}
%%%%%%%%%%%%%%%%%%%%%%%%%%%%%%%%%%%%%%%%%%%%
%%%%%%%%%%%%%%%%%%%%%%%%%%%%%%%%%%%%%%%%%%%%

 %%%%%%%%%%%%%%%%%%%%%%%%%%%%%%%%%%%%%%%%%%%%
%%%%%%%%%%%%%%%%%%%%%%%%%%%%%%%%%%%%%%%%%%%%
%%%%%%%%%%%%%%%%%%%%%%%%%%%%%%%%%%%%%%%%%%%%

\subsection{The stochastic heat equation on $\R^d$}\label{equation_whole_space}
In \cite{Chongheavytailed}, the first author proved the existence of a solution to the stochastic heat equation on $\R^d$ under assumptions on the driving noise that are general enough to include the case of $\alpha$-stable noises. More specifically, suppose that $D=\R^d$ in \eqref{SHE} and
%\begin{equation}
% \label{SHE}
% \left\{\begin{array}{ll}\frac{\partial u}{\partial t}(t,x)= \Delta u(t,x) + \sigma(u(t,x)) \dot L(t,x)\, , & (t,x)\in [0,T]\times \R^d\, ,  \\u(0,x)=u_0(x) \, ,& \text{for all} \ x\in \R^d\, , \end{array}\right. 
%\end{equation}
%where $\sigma$ is Lipschitz continuous, $u_0$ is continuous and bounded, and $\dot L$ is a pure-jump Lévy white noise as in \eqref{noise} (of course, the Poisson random measure $J$ is now defined on $[0,T]\times \R^d\times \R$). More precisely, we suppose that 
%\begin{equation}\label{noiserd}
%	\begin{aligned}
 %L( \dd t, \dd x)&= b\,\dd t\,\dd x+\int_{|z|\I 1} z \tilde J(\dd t, \dd x, \dd z)+\int_{|z|> 1} z  J(\dd t, \dd x, \dd z)\\
 %&=:L^B(\dd t,\dd x)+L^M(\dd t, \dd x) +L^P(\dd t , \dd x)\, ,
	% \end{aligned}
%\end{equation}
%where  {$b\in\R$}, $J$ is a Poisson random measure on $[0,\infty)\times \R^d \times \R$ with intensity $\dd t \, \dd x \, \nu( \dd z)$, and $\tilde J$ is the compensated version of $J$. The measure $\nu$ is a Lévy measure, that is, $\nu(\{0\}) =0$ and $\int_\R \lp z^2\wedge 1 \rp \nu(\dd z)<+\infty$.
 that the following hypotheses hold:
\begin{itemize}
 \item[\hypertarget{hyp1}{\textbf{(H)}}] There exists $0<p<1+\frac 2 d$ and $\frac{p}{1+\lp 1+\frac 2 d -p\rp}<q\I p$ such that
\begin{equation*}
 \int_{|z|\I 1}|z| ^p \,\nu(\dd z)+\int_{|z|>1}|z|^q\,\nu( \dd z)<+\infty\, .
\end{equation*}
If $p<1$, we assume that
$b_0:=b-\int_{|z|\I 1} z\,\nu(\dd z)=0$.
\end{itemize}
% The heat kernel for the stochastic heat equation \eqref{SHE} on this domain is given by the usual Gaussian density function
%\begin{equation*}
%g(t,x-y)=\frac{1}{\lp4\pi t\rp^{\frac d 2}}e^{-\frac{|x-y|^2}{4t}}\, ,
%\end{equation*}
%and a mild solution of \eqref{SHE} is a predictable random field $u$ such that for any $(t,x) \in [0,T]\times \R^d$, 
%\begin{equation}\label{mildsol}
% u(t,x)=\int_{\R^d}g(t,x-y)u_0(y)\,\dd y+ \int_0^t \int_{\R^d} g(t-s, x-y) \sigma (u(s,y)) L(\dd s , \dd y) \qquad \text{a.s.}
%\end{equation}
In contrast to the situation on a bounded domain, we can no longer use the stopping times $\tau_N$ in \eqref{tauN} (with $[0,\pi]$ replaced by $\R^d$) to localize equation \eqref{SHE}. Indeed, since $\R^d$ is unbounded, the $\dd t\,\dd x\,\nu(\dd z)$-measure of $[0,T]\times\R^d \times [-N,N]^c$ will in general be infinite for any $N\in\N$. In particular, on any time interval $[0,\eps]$ where $\eps>0$, we already have infinitely many jumps of arbitrarily large size, which implies that $\tau_N=0$ almost surely for all $N\in\N$. 

Therefore,  instead of using the stopping times \eqref{tauN}, the idea is to use truncation levels that increase with the distance to the origin. More precisely, let $h\colon\R^d\to \R$ be the function $h(x)=1+|x|^\eta$, for some $\eta$ to be chosen later, and define for $N\in \N$,
\begin{equation}\label{stoppingtimes}
\tau_N= \inf \left \{t\in [0,T] \colon J\lp [0,t] \times \left \{ (x,z)\colon |z| > Nh(x)\right \} \rp>0 \right \}\, .
\end{equation}
For every $N\s 1$, we can now introduce a truncation of $L$ by 
 \begin{equation}\label{LN}
 L_N( \dd t, \dd x)= b\,\dd t\,\dd x+\int_{|z|\I 1} z\, \tilde J(\dd t, \dd x, \dd z)+\int_{1<|z|<Nh(x)} z  \,J(\dd t, \dd x, \dd z)
\end{equation}
(do not confuse this with $L_N$ defined in \eqref{LN-interval}, which was for the case of the interval $[0,\pi]$),
which in turn gives rise to the equation
%By definition, a mild solution to the truncated equation \eqref{SHE} where $\dot L$ is replaced by $\dot L_N$, is a predictable random field $u_N$ such that for any $(t,x) \in [0,T]\times \R^d$, 
\begin{equation}\label{truncatedmild}
 u_N(t,x)= V(t,x)+ \int_0^t \int_{\R^d} g(t-s, x-y) \sigma (u_N(s,y)) L_N(\dd s , \dd y)\,.
\end{equation}

\begin{prop}\label{existencerealline}
Let $\sigma$ be Lipschitz continuous, $u_0$ be bounded and continuous, and $L$ be a Lévy white noise as in \eqref{noise} satisfying \hyperlink{hyp1}{\textbf{(H)}} for some $p,q>0$. Then, if we choose $\frac d q<\eta<\frac{2-d(p-1)}{p-q}$, we have $\tau_N>0$ for every $N\s 1$ and almost surely, $\tau_N= +\infty$ for large $N$ (recall the convention $\inf \emptyset = +\infty$). Moreover, for any $N\s 1$, there exists a solution $u_N$ to \eqref{truncatedmild} such that for some constant $C_N<+\infty$, we have
\begin{equation}\label{moment1}
 \sup_{(t,x)\in [0,T]\times [-R,R]^d}\E \lc\left |u_N(t,x) \right | ^p \rc \I  E_{\frac{2-d (p-1)}{2(p\vee 1)},\frac{1}{p\vee 1}}\Big( C_N R^{\frac{\eta(p-q)}{p\vee1}}\Big) < + \infty\, ,\quad R>1\, ,
\end{equation}
where $E_{\alpha,\beta}(z)=\sum_{k \s 0} \frac{z^k}{\Gamma(\alpha k+\beta)}$ for $\alpha,\beta > 0$ and $z\in\R$ are the Mittag--Leffler functions. 

Furthermore, we have $u_N(t,x) = u_{N+1}(t,x)$ on $\{t \I \tau_N \}$, and the random field $u$ defined by $u(t,x)=u_N(t,x)$ on $\{t\I \tau_n\}$, is a mild solution to \eqref{SHE} on $D=\R^d$.
 \end{prop}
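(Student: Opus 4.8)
The plan is to follow the construction of \cite{Chongheavytailed}; the proof is mostly a matter of recalling its main steps and tracking the dependence on $R$ and $N$ that produces \eqref{moment1}.

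\emph{The stopping times.} First, hypothesis \hyperlink{hyp1}{\textbf{(H)}} gives the tail bound $\nu(\{|z|>\lambda\})\I C\lambda^{-q}$ for all $\lambda\s1$, since $\int_{|z|>1}|z|^q\,\nu(\dd z)<+\infty$. As $h(x)=1+|x|^\eta\s1$, the $\dd t\,\dd x\,\nu(\dd z)$-intensity of the jumps defining $\tau_N$ in \eqref{stoppingtimes} satisfies
\begin{equation*}
 T\int_{\R^d}\nu\big(\{|z|>Nh(x)\}\big)\,\dd x\I CTN^{-q}\int_{\R^d}(1+|x|^\eta)^{-q}\,\dd x<+\infty\,,
\end{equation*}
the last integral being finite precisely because $\eta>\frac dq$. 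Hence the restriction of $J$ to $\{(t,x,z)\colon|z|>Nh(x)\}$ is a compound Poisson random measure with finitely many atoms, whose smallest time coordinate is almost surely strictly positive, so $\tau_N>0$; moreover the displayed intensity tends to $0$ as $N\to+\infty$ while the events $\{\tau_N<+\infty\}$ decrease in $N$, so $\bbp(\bigcap_N\{\tau_N<+\infty\})=\lim_N\bbp(\tau_N<+\infty)=0$, i.e.\ $\tau_N=+\infty$ for large $N$ almost surely.

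\emph{Existence of $u_N$ and the moment bound.} For fixed $N$ I would build $u_N$ by Picard iteration, $u_N^{(0)}=V$ and $u_N^{(m+1)}=V+\int_0^\cdot\int_{\R^d}g(\cdot-s,\cdot-y)\sigma(u_N^{(m)}(s,y))\,L_N(\dd s,\dd y)$, measuring convergence in $\E[|\cdot|^p]$ (a metric when $p<1$, using $|a+b|^p\I|a|^p+|b|^p$, which is why \textbf{(H)} forces $b_0=0$ in that regime, so that no genuine drift term survives and the drift plus small-jump part becomes an absolutely convergent jump integral). The key inputs are the $p$-th moment inequality for stochastic integrals against random measures (\cite[Theorem~1]{marinelli} together with the integration theory recalled in the Appendix) and the fact that $\int_0^T\int_{\R^d}g(s,y)^p\,\dd s\,\dd y<+\infty$ exactly when $p<1+\frac2d$, which yields a time kernel $(t-s)^{-d(p-1)/(2(p\vee1))}$ of exponent $<1$. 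Since the jump part of $L_N$ at a point $y$ carries mass only up to level $Nh(y)$, one has $\int_1^{Nh(y)}|z|^{p\vee1}\,\nu(\dd z)\I C(Nh(y))^{(p\vee1)-q}$, and after integrating against the Gaussian kernel (whose tails give $\int_{\R^d}g(t-s,x-y)(1+|y|)^\theta\,\dd y\I C(1+|x|)^\theta$ for $s\I t\I T$) this is what generates the spatial growth $(|x|\vee1)^{\eta(p-q)/(p\vee1)}$. Propagating the self-consistent ansatz $\E[|u_N^{(m)}(t,x)|^p]\I E_{\alpha,\beta}\big(C_N(|x|\vee1)^{\eta(p-q)/(p\vee1)}\big)$ through the iteration, using monotonicity and the growth properties of the Mittag--Leffler functions and a fractional Gronwall estimate in time, gives \eqref{moment1} in the limit; a standard contraction then yields convergence of $u_N^{(m)}$ to a mild solution $u_N$ of \eqref{truncatedmild}.

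\emph{Consistency and gluing.} Before time $\tau_N$ there is no atom of $J$ of size exceeding $Nh(x)$, so $L_N$, $L_{N+1}$ and $L$ coincide on $[0,\tau_N)\times\R^d$; a pathwise-uniqueness argument (Gronwall applied to $t\mapsto\sup_{x\in[-R,R]^d}\E[|u_N(t,x)-u_{N+1}(t,x)|^p\mathds1_{t\I\tau_N}]$, using the moment bounds just obtained) forces $u_N=u_{N+1}$ on $\{t\I\tau_N\}$. Since almost surely $\tau_N=+\infty$ for large $N$, the field $u$ with $u=u_N$ on $\{t\I\tau_N\}$ is well defined on all of $[0,T]\times\R^d$, and because on $\{t\I\tau_N\}$ the field $u_N$ solves \eqref{truncatedmild} with $L_N$ replaced by $L$, $u$ satisfies \eqref{mildsolution}. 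The main obstacle is the second step: closing the a priori estimate in the self-consistent form \eqref{moment1} requires careful bookkeeping of how the $x$-dependent truncation level $Nh(x)$ interacts with the Gaussian smoothing, and it works only when the interval $\big(\frac dq,\frac{2-d(p-1)}{p-q}\big)$ for $\eta$ is nonempty — which, after rearrangement, is exactly the quantitative condition $\frac{p}{1+(1+\frac2d-p)}<q$ in \textbf{(H)}. All of this is carried out in \cite{Chongheavytailed}, to which I would refer for the details, reproducing only the exponent computation needed to display \eqref{moment1}.
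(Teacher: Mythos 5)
Your proposal is correct and follows the same overall route as the paper: everything except the precise form of \eqref{moment1} is delegated to \cite[Theorem~3.1]{Chongheavytailed}, with \cite[Theorem~1]{marinelli} supplying the $p$-th moment inequality needed to extend the cited moment lemma to $2<p<3$ when $d=1$ (a point the paper makes explicit and you cover implicitly by invoking the same reference). The one place where you package things differently is the derivation of the Mittag--Leffler bound. You propose to propagate the ansatz $\E\lc|u_N^{(m)}(t,x)|^p\rc\I E_{\alpha,\beta}\big(C_N(|x|\vee1)^{\eta(p-q)/(p\vee1)}\big)$ through the Picard iteration via a fractional Gronwall argument; the paper instead takes the explicit partial sums already computed in \cite{Chongheavytailed}, whose $n$-th term is of the form $C_N^n\big(\big(R^{n\eta(p-q)}+\Gamma(\tfrac{1+n\eta(p-q)}{2})\big)\Gamma(1-\tfrac d2(p-1))^n/\Gamma(1+(1-\tfrac d2(p-1))n)\big)^{1/(p\vee1)}$, and then recognizes the sum as a Mittag--Leffler function after an application of Stirling's formula, $\Gamma(1+ax)^{1/(p\vee1)}\s C\,\Gamma(\tfrac{1+ax}{p\vee1})$. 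The two routes encode the same computation, but note that the induction as you state it does not close literally: one Picard step applied to $E_{\alpha,\beta}(C(|y|\vee1)^\theta)$ does not return a function of exactly that form, since each step shifts the index of the series and multiplies the spatial weight by a further factor $h(y)^{p-q}$ --- which is precisely why the $n$-th term carries $R^{n\eta(p-q)}$. What one actually propagates is the truncated series with those explicit coefficients; the Mittag--Leffler function appears only as an upper bound for the full sum at the end, and the Stirling step is needed to match its parameters. Your remaining bookkeeping (the tail bound $\nu(\{|z|>\lambda\})\I C\lambda^{-q}$, the integrability of $(1+|x|^\eta)^{-q}$ for $\eta>\frac dq$, the monotonicity argument for $\tau_N=+\infty$ eventually, and the equivalence of the non-emptiness of the $\eta$-interval with the lower bound on $q$ in \textbf{(H)}) is correct and consistent with the paper.
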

\begin{proof}
	The result is a direct application of \cite[Theorem 3.1]{Chongheavytailed} except for the moment property \eqref{moment1}. The finiteness of the left-hand side is included in the cited theorem for $p< 1+\frac 2 d$. In the case $d=1$ and $2<p<3$, the only thing we need is an extension of \cite[Lemma~3.3(2)]{Chongheavytailed}, which can be obtained by combining the arguments given in the proof of \cite[Lemma~3.3(2)]{Chongheavytailed} and the proof of Proposition~\ref{existence}. Indeed, for predictable $\phi_1$ and $\phi_2$, proceeding as in the proof of \cite[Lemma~3.3]{Chongheavytailed} but using the moment inequalities of \cite[Theorem~1]{marinelli}, we see that
	\begin{align*}
		&\E\left[ \left|\int_0^t\int_{\R} g(t-s,x-y)\left(\sigma(\phi_1(s,y))-\sigma(\phi_2(s,y))\right) L_N(\dd s,\dd y)\right|^p \right]\\
		&\qquad \I C\int_0^t \int_\R (g(t-s,x-y)+g^p(t-s,x-y))\E[|\phi_1(s,y)-\phi_2(s,y)|^p] h(y)^{p-q}\dd s\,\dd y\, .
	\end{align*}

	In order to obtain the bound involving the Mittag--Leffler functions, observe from the calculations between  the last display on page 2272 and equation (3.13) of \cite{Chongheavytailed} that for every fixed $N\s1$, there exists $C_N<+\infty$ such that
	\begin{align*} &\sup_{(t,x)\in [0,T]\times [-R,R]^d}\E[|u_N(t,x)|^p] \I \sum_{n\s 1} C_N^n\left(  \frac{\left( R^{n\eta(p-q)}+\Gamma\left(\textstyle\frac{1+{n\eta(p-q)}}{2}\right)\right)\Gamma\left(1-\frac{d}{2}(p-1)\right)^n}{\Gamma\left(1+(1-\frac{d}{2}(p-1))n\right)} \right)^{\frac 1 {p\vee1}}\\
		&\qquad \I \sum_{n\s 1} C_N^n \left(  \frac{\Gamma\left(\textstyle\frac{1+{n\eta(p-q)}}{2}\right)}{\Gamma\left(1+(1-\frac{d}{2}(p-1))n\right)}\right)^{\frac 1 {p\vee1}} + \sum_{n\s 1} C_N^n \left(  \frac{R^{n\eta(p-q)}}{\Gamma\left(1+(1-\frac{d}{2}(p-1))n\right)}\right)^{\frac 1 {p\vee1}}\,. \end{align*}
	The first series converges for our choice of $\eta$. Furthermore, for all $a>0$, we have by Stirling's formula that $\Gamma(1+ax)^\frac{1}{p\vee1} \s C \Gamma(\frac{1+ax}{p\vee1})$ when $x>1$. Hence,
	\[ \sup_{(t,x)\in [0,T]\times [-R,R]^d}\E[|u_N(t,x)|^p] \I C_N + \sum_{n\s 1} \frac{\big(C_N R^{\frac{\eta(p-q)}{p\vee1}}\big)^n}{\Gamma\left(\frac{1}{p\vee1} +\frac{(2-d(p-1))n}{2(p\vee1)} \right)} \I E_{\frac{2-d (p-1)}{2(p\vee 1)},\frac{1}{p\vee 1}}\Big( C_N R^{\frac{\eta(p-q)}{p\vee1}}\Big)\,,  \]
	which is \eqref{moment1}.
\end{proof}

%%%%%%%%%%%%%%%%%%%%%%%%%%%%%%%%%%%%%%%%%%%%
%%%%%%%%%%%%%%%%%%%%%%%%%%%%%%%%%%%%%%%%%%%%

 \subsubsection{Stationarity of the solution}

The proof of Proposition~\ref{existencerealline} heavily relies on the stopping times $\tau_N$ introduced in \eqref{stoppingtimes}. These are ``centered'' around the origin in the sense that large jumps are permitted if they occur far enough from $x=0$. As a consequence, even if the initial condition is constant, it does not follow a priori from Proposition~\ref{existencerealline} that the solution $u$ to \eqref{SHE} is stationary in space. On the other hand, of course, choosing to center $\tau_N$ around the origin is completely arbitrary. So in this section, we show that the solution constructed in Proposition~\ref{existencerealline} remains the same if we take other spatial reference points for $\tau_N$, from which the stationarity of the solution in space will follow.

To this end, let $\frac d q<\eta<\frac{2-d(p-1)}{p-q}$ and define the family of stopping times $\tau_N^a$ by
\begin{equation*}
 \tau_N^a:=\inf\left \{ t\in [0,T]\colon  J\lp [0,t]\times\left \{(x,z)\colon |z|>Nh(x-a)  \right \} \rp >0 \right \}\,,\quad N\in\N\,,\quad a\in\R^d\,.
\end{equation*}
In particular, $\tau_N^0$ is the same as $\tau_N$ defined in \eqref{stoppingtimes}. Since the intensity measure of $J$ is invariant under translation in the space variable, $\tau_N^a$ has the same law as $\tau_N$, and the conclusions of Proposition~\ref{existencerealline} are valid for $\tau_N^a$. In particular, for any $N\s1$, almost surely $\tau_N^a>0$, and $\tau_N^a= +\infty$ for large $N$. Furthermore, by definition, on the event $\left \{ t\I \tau_N^a\right \}$, $L(\dd t , \dd x)=L_N^a(\dd t , \dd x)$, where
\begin{equation*}
L_N^a(\dd t , \dd x):= b\, \dd t \,\dd x + \int_{|z|\I 1} z \, \tilde J(\dd t , \dd x, \dd z) +\int_{1<|z| \I Nh(x-a)} z \, J(\dd t , \dd x, \dd z) \, .
\end{equation*}
%For $N\in \N$, and $a\in \R^d$, we now consider the truncated stochastic heat equation 
%\begin{equation}
% \label{SHEN}
% \left\{\begin{array}{ll}\frac{\partial u_N^a}{\partial t}(t,x)= \Delta u_N^a (t,x) + \sigma(u_N^a(t,x)) \dot L_N^a(t,x)\, , & (t,x)\in [0,T]\times \R^d\,,  \\u_N^a(0,x)=u_0(x) & \text{for all} \ x\in \R^d\, . \end{array}\right.
%\end{equation}
%More precisely, we say that $u_N^a$ is a mild solution to \eqref{SHEN} if for any $(t,x) \in [0,T]\times \R^d$,
%\begin{equation*}
 %u_N^a(t,x)=\int_0^t\int_{\R^d} g (t-s,x-y) \sigma\lp u_N^a(s,y) \rp L_N^a(\dd s , \dd y) \, , \qquad \text{a.s.}
%\end{equation*}
\begin{prop}\label{lem1}
Let $\sigma \colon \R\to \R$ be Lipschitz continuous, $u_0$ be bounded and continuous, and $L$ be a Lévy white noise as in \eqref{noise} fulfilling the assumption \hyperlink{hyp1}{\textbf{(H)}} with $p,q>0$. Then for any $N\in \N$ and $a\in\R^d$, there exists a mild solution $u_N^a$ to \eqref{SHE} with noise $L^a_N$ instead of $L$ such that \eqref{moment1} also holds for $u_N^a$.
%\end{prop}
%\begin{proof}
 %This results comes from a simple adaptation to the proof of \cite[Theorem 3.1]{Chongheavytailed}. Indeed, the only difference is the shift by $a$ of the truncation function $h$. The result on the moments is a direct consequence of \cite[Theorem 3.1]{Chongheavytailed} if $p\I 2$, and can be extended to $p<1+\frac 2 d$ similarly to the proof of Proposition \ref{existencerealline}.
%\end{proof}
%\begin{prop}\label{lem1}
 Moreover, for $a,b\in \R^d$, $N\in \N$ and $(t,x) \in [0,T]\times \R^d$, we have
\begin{equation}\label{asequality}
 u_N^a(t,x)\mathds 1_{t\I \tau_N^a\wedge \tau_N^b}=u_N^b(t,x)\mathds 1_{t\I \tau_N^a\wedge \tau_N^b}  \quad \text{a.s.}
\end{equation}
\end{prop}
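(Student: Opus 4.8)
The plan is to prove the two assertions separately. For the existence of $u_N^a$ together with the moment bound \eqref{moment1}, I would simply re-run the construction underlying Proposition~\ref{existencerealline} (that is, \cite[Theorem~3.1]{Chongheavytailed}) for the noise $L_N^a$ in place of $L_N$, i.e.\ with the weight $h(\cdot)=1+|\cdot|^\eta$ replaced by the shifted weight $h(\cdot-a)=1+|\cdot-a|^\eta$ in \eqref{LN} and \eqref{truncatedmild}. Since the intensity measure $\dd t\,\dd x\,\nu(\dd z)$ of $J$ is invariant under translations of the space variable, none of the estimates changes; concretely, $u_N^a(\cdot,\cdot+a)$ has the same law as $u_N$, and, because $h(x-a)\I C_a\lp1+|x|^\eta\rp$ for each fixed $a$, \eqref{moment1} carries over to $u_N^a$ with a constant $C_N$ now also allowed to depend on $a$. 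This part is routine and I would dispatch it in a few lines.

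The substance is \eqref{asequality}. Write $\tau:=\tau_N^a\wedge\tau_N^b$ and $w(t,x):=\lp u_N^a(t,x)-u_N^b(t,x)\rp\mathds 1_{\{t\I\tau\}}$. The first step is to observe that the three random measures $L_N^a$, $L_N^b$ and $L$ coincide on the stochastic interval $[0,\tau]$: by the definitions of $\tau_N^a$ and $\tau_N^b$, every atom $(s,y,z)$ of $J$ with $s\I\tau$ satisfies both $|z|\I Nh(y-a)$ and $|z|\I Nh(y-b)$, so no large jump is truncated by either $L_N^a$ or $L_N^b$. Since $u_N^a$ and $u_N^b$ solve \eqref{SHE} with the same initial datum and with noises $L_N^a$, resp.\ $L_N^b$, the deterministic parts $V$ cancel in the difference; inserting the predictable factor $\mathds 1_{\{s\I\tau\}}$ (legitimate because $[0,\tau]$ is a predictable set and $\mathds 1_{\{t\I\tau\}}\mathds 1_{\{s\I\tau\}}=\mathds 1_{\{t\I\tau\}}$ for $s\I t$) and replacing $L_N^a,L_N^b$ by $L$ on $[0,\tau]$, one is led to the identity
\begin{equation*}
 w(t,x)=\mathds 1_{\{t\I\tau\}}\int_0^t\int_{\R^d}\mathds 1_{\{s\I\tau\}}\,g(t-s,x-y)\lp\sigma(u_N^a(s,y))-\sigma(u_N^b(s,y))\rp\,L(\dd s,\dd y)\,,
\end{equation*}
valid for every $(t,x)\in[0,T]\times\R^d$.

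From here the argument mirrors the uniqueness part of \cite[Theorem~3.1]{Chongheavytailed}. Using that $\sigma$ is Lipschitz, the integrand above is dominated in absolute value by $C\,g(t-s,x-y)\,|w(s,y)|$, and by the first part $w$ has finite $p$-th moments that are locally bounded in $x$ by \eqref{moment1}. Feeding this into the very moment estimate established in the proof of Proposition~\ref{existencerealline} (the bound derived there from \cite[Theorem~1]{marinelli}, extending \cite[Lemma~3.3(2)]{Chongheavytailed}), one obtains
\begin{equation*}
 \E\lc|w(t,x)|^p\rc\I C\int_0^t\int_{\R^d}\lp g(t-s,x-y)+g^p(t-s,x-y)\rp h(y-a)^{p-q}\,\E\lc|w(s,y)|^p\rc\,\dd s\,\dd y\,.
\end{equation*}
Iterating this inequality and controlling the spatial growth of both the kernel and of $\E[|w|^p]$ by the Mittag--Leffler bound \eqref{moment1} — exactly as in the cited uniqueness proof — forces $\E[|w(t,x)|^p]=0$ for all $(t,x)$; since $w$ is continuous in $L^p(\Omega)$, being the difference of two such fields, this gives $w\equiv0$, which is \eqref{asequality}.

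The step I expect to be the main obstacle is this last weighted Gronwall/iteration argument: because $h(y-a)^{p-q}$ is unbounded and $w$ has only locally (not globally) bounded moments, the classical Gronwall lemma does not apply, and one has to reproduce the weighted fixed-point estimate of \cite{Chongheavytailed}, carefully tracking the Mittag--Leffler constants. A secondary, more clerical point is the handling of the stopping-time indicators when passing to the identity for $w$, in particular the behaviour at $s=\tau$, where a single large jump may sit; for a fixed $x$ this is harmless because a jump occurring at time exactly $t$ contributes $g(0,x-y)=0$ (a.s.\ $y\neq x$) to $u(t,x)$, so that integration of $L$ over the predictable interval $[0,\tau]$ agrees, on $\{t\I\tau\}$, with integration over $[0,t]$.
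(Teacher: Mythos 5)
Your proof of the first assertion coincides with the paper's (which simply says it ``is proved in the same way as Proposition~\ref{existencerealline}''), but for \eqref{asequality} you take a genuinely different route. The paper does not work with the limit solutions at all: it proves the identity $u_N^{a,n}(t,x)\mathds 1_{t\I \tau_N^a\wedge \tau_N^b}=u_N^{b,n}(t,x)\mathds 1_{t\I \tau_N^a\wedge \tau_N^b}$ by induction on the Picard iterates $u_N^{a,n}$, using only the insertion of the predictable indicator $\mathds 1_{s\I \tau_N^a\wedge\tau_N^b}$ and the fact that $L_N^a=L_N^b$ on the stochastic interval, and then lets $n\to+\infty$ in $L^p(\Omega)$. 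This is ``soft'': no estimate on the difference is needed beyond the $L^p$-convergence of the scheme, which is already part of \cite[Theorem~3.1]{Chongheavytailed}. You instead set $w=(u_N^a-u_N^b)\mathds 1_{t\I\tau}$, derive the integral inequality for $\E\lc|w(t,x)|^p\rc$ via the extended Bichteler--Jacod bound, and iterate it; this amounts to re-proving a uniqueness statement (which Proposition~\ref{existencerealline} does not explicitly assert for $D=\R^d$) by the weighted fixed-point iteration, using the Mittag--Leffler growth bound \eqref{moment1} to guarantee that the $n$-fold iterated kernel applied to the sub-Gaussian a priori bound on $\E[|w|^p]$ tends to zero. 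Your argument does close — the required computation is exactly the one carried out in the proof of Proposition~\ref{existencerealline} — but it is quantitatively heavier than the paper's, and you correctly flag the weighted Gronwall step as the place where all the work sits; the induction along the Picard scheme avoids it entirely. Your treatment of the endpoint $s=\tau$ (a large jump at time exactly $t$ contributes $g(0,x-X_i)=0$ for a.e.\ fixed $x$, and for $s=\tau<t$ the prefactor $\mathds 1_{t\I\tau}$ kills everything) is the same resolution the paper implicitly uses. One small care point: after replacing $L_N^a,L_N^b$ by $L$ on $[0,\tau]$ you must revert to $L_N^a$ (or $L_N^b$) before invoking the moment estimate, since the untruncated $L$ need not admit the $p$-th moment bound; your displayed inequality, carrying the weight $h(y-a)^{p-q}$, shows you in effect do this.
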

\begin{proof}
The first part is proved in the same way as Proposition~\ref{existencerealline}. For \eqref{asequality}, we observe that $L_N^a=L_N^b$ on  $\{t< \tau_N^a\wedge \tau_N^b\}$. Then we use the construction of the solutions $u_N^a$ and $u_N^b$ via a Picard iteration scheme as in the proof of \cite[Theorem~3.1]{Chongheavytailed} and show that at each step of the scheme,
\begin{equation}\label{eq1stationary}
u_N^{a,n}(t,x)\mathds 1_{t\I \tau_N^a\wedge \tau_N^b}=u_N^{b,n}(t,x)\mathds 1_{t\I \tau_N^a\wedge \tau_N^b}  \quad \text{a.s.} 
\end{equation}
For $n=0$, we clearly have $u_N^{a,0}(t,x)=u_N^{b,0}(t,x)=u_0(x)$. Now if \eqref{eq1stationary} holds for some $n\s 0$, then
\begin{align*}
 u_N^{a,n+1}(t,x)\mathds 1_{t\I \tau_N^a\wedge \tau_N^b}&=\mathds 1_{t\I \tau_N^a\wedge \tau_N^b}\int_0^t\int_{\R^d} g (t-s,x-y) \sigma\lp u_N^{a,n}(s,y) \rp\, L_N^a(\dd s , \dd y) \\
 &=\mathds 1_{t\I \tau_N^a\wedge \tau_N^b}\int_0^t\int_{\R^d} g (t-s,x-y) \sigma\lp u_N^{b,n}(s,y) \rp \mathds 1_{s\I \tau_N^a\wedge \tau_N^b} \,L_N^a(\dd s , \dd y)\\
 &=\mathds 1_{t\I \tau_N^a\wedge \tau_N^b}\int_0^t\int_{\R^d} g (t-s,x-y) \sigma\lp u_N^{b,n}(s,y) \rp \mathds 1_{s\I \tau_N^a\wedge \tau_N^b} \,L_N^b(\dd s , \dd y)\\
 &=\mathds 1_{t\I \tau_N^a\wedge \tau_N^b}\int_0^t\int_{\R^d} g (t-s,x-y) \sigma\lp u_N^{b,n}(s,y) \rp\, L_N^b(\dd s , \dd y) \\
 &=  u_N^{b,n+1}(t,x)\mathds 1_{t\I \tau_N^a\wedge \tau_N^b}\, .
\end{align*}
Since $u_N^{a,n}(t,x) \to u_N^{a}(t,x)$ and $u_N^{b,n}(t,x) \to u_N^{b}(t,x)$ as $n\to +\infty$ in $L^p(\Omega)$, we deduce \eqref{asequality}. 
 \end{proof}
 
 In \cite[Definition 5.1]{dalang99}, the second author introduced the property (S) for a stochastic process and a martingale measure, which is a sort of stationarity property in the space variable. In our case, the noise is not necessarily a martingale measure, but we can use a similar definition:
\begin{df}\label{propS}
 We say the family of random fields $u_N^a$ has \emph{property (S)} if the law of the process 
\begin{equation*}
 \lp \lp u_N^a(t,a+x) \, , (t,x)\in [0,T]\times \R^d \rp \, ; \lp L_N^a\lp [0,t]\times \lp a+B\rp \rp \, , (t,B) \in [0,T]\times \m B_b(\R^d) \rp \rp\, ,
\end{equation*}
does not depend on $a$.
\end{df}

\begin{lem}\label{eqinlaw}
 If $u_0(x)\equiv u_0$ is constant, then the family  $(u_N^a\colon a\in\R^d)$ has property (S).
\end{lem}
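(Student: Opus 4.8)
The plan is to exploit the translation invariance of the driving structure. Since the intensity measure $\dd t\,\dd x\,\nu(\dd z)$ of the Poisson random measure $J$ is invariant under spatial shifts $x\mapsto x+a$, the shifted noise $L_N^a$ and the reference noise $L_N^0=L_N$ are related in law by precisely such a translation; more precisely, if $\theta_a$ denotes the spatial shift operator on $[0,T]\times\R^d$, then $J\circ\theta_a^{-1}$ has the same law as $J$, and consequently the pair $(L_N^a([0,\cdot]\times(a+\cdot)),\dots)$ has the same law as $(L_N([0,\cdot]\times\cdot),\dots)$. The first step is therefore to make this statement precise: introduce the map that sends an atom $(t,x,z)$ of $J$ to $(t,x+a,z)$ and observe that it transports the law of $J$ to itself, hence transports $L_N$ to $L_N^a$ (the definition of $L_N^a$ uses the weight $h(x-a)$ precisely so that this works), and transports $\tau_N^0$ to $\tau_N^a$.

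Next I would use the uniqueness of the mild solution together with the Picard scheme from the proof of Proposition~\ref{existencerealline}. The solution $u_N^a$ is the $L^p(\Omega)$-limit of the Picard iterates $u_N^{a,n}$ defined by $u_N^{a,0}(t,x)=u_0(x)=u_0$ (constant) and $u_N^{a,n+1}(t,x)=V(t,x)+\int_0^t\int_{\R^d}g(t-s,x-y)\sigma(u_N^{a,n}(s,y))\,L_N^a(\dd s,\dd y)$, where $V(t,x)=\int_{\R^d}g(t;x-y)u_0\,\dd y=u_0$ by the stochasticity of the heat kernel, so $V$ does not depend on $x$. Applying the spatial shift $\theta_a$ to the whole iteration, one checks by induction on $n$ that the law of $\big(u_N^{a,n}(t,a+x)\,,(t,x)\big)$ together with $\big(L_N^a([0,t]\times(a+B))\,,(t,B)\big)$ does not depend on $a$: the base case is immediate since $u_N^{a,0}(t,a+x)=u_0$, and the inductive step follows because $u_N^{a,n+1}(t,a+x)=V(t,a+x)+\int_0^t\int_{\R^d}g(t-s,a+x-y)\sigma(u_N^{a,n}(s,y))\,L_N^a(\dd s,\dd y)$, and substituting $y=a+y'$ turns the integral against $L_N^a(\dd s,\dd(a+y'))$ into an integral against a random measure whose joint law with the shifted iterate is, by the induction hypothesis and the invariance of $J$, independent of $a$. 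Passing to the $L^p(\Omega)$-limit $n\to\infty$ preserves equality in law of all finite-dimensional distributions, hence property (S) holds for the family $(u_N^a\colon a\in\R^d)$.

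The main obstacle is bookkeeping rather than conceptual: one must keep careful track of the fact that property~(S) is a statement about the joint law of the solution field \emph{and} the noise (as in Definition~\ref{propS}), so it is not enough to know that $u_N^a(t,a+\cdot)$ and $u_N^0(t,\cdot)$ are individually equal in law — the induction must carry the joint law through each Picard step. The clean way to do this is to fix a single measure-preserving map on the underlying probability space (or work with the canonical space of the Poisson random measure) that implements the shift $\theta_a$ on $J$, verify that under this map $L_N^a$ becomes $L_N^0$ while $u_N^{a,n}(\cdot,a+\cdot)$ becomes $u_N^{0,n}(\cdot,\cdot)$ for every $n$, and then note that equality in law is immediate. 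One should also remark that the constancy of $u_0$ is used only to guarantee that the initial iterate $u_N^{a,0}\equiv u_0$ and the deterministic term $V\equiv u_0$ are genuinely shift-invariant; for a non-constant $u_0$ this fails, which is exactly why the hypothesis $u_0(x)\equiv u_0$ appears. Finally, since the $u_N^{a,n}$ converge in $L^p(\Omega)$ uniformly enough (as established in Proposition~\ref{existencerealline}) to justify convergence of finite-dimensional distributions, the limiting family inherits property~(S), which completes the proof.
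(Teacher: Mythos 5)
Your proof is correct and follows essentially the same route as the paper: reduce to the Picard iterates, use the constancy of $u_0$ (and the fact that $V\equiv u_0$) for the base case, carry the joint law of the iterate and the noise through each Picard step via the translation invariance of the law of $J$, and pass to the $L^p(\Omega)$-limit. The paper delegates the inductive step to \cite[Lemma 18]{dalang99}, whereas you spell out the change of variables $y=a+y'$ explicitly, but the argument is the same.
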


\begin{proof}
Similarly to the proof of Proposition~\ref{lem1}, it is enough to show property (S) for the Picard iterates $u_N^{a,n}$ for each $n\s 0$. %, where
%\begin{equation}\label{picard}
%\left\{\begin{array}{ll}u_N^{a,0}(t,x):=0 \, , &  \text{for all} \ (t,x) \in [0,T]\times \R^d \\ u_N^{a,n+1}(t,x):=\int_0^t\int_{\R^d} g (t-s,x-y) \sigma\lp u_N^{a,n}(s,y)\rp L_N^a(\dd s , \dd y)\, , &  \text{for all} \ (t,x) \in [0,T]\times \R^d \end{array}\right.
%\end{equation}
%In the following, to lighten the notations, we drop the subscript $N$ and assume without loss of generality that $N=1$. 
For $n=0$, we obviously have $u^{a,0}_N(t,x+a)=u_0=u^{0,0}_N(t,x)$. So property (S) for $u_N^{a,n}$ follows from the fact that the law of $\lp L_N^a\lp [0,t]\times \lp a+B\rp \rp, (t,B) \in [0,T]\times \m B_b(\R^d) \rp$ does not depend on $a$. Next, assume that $u^{a,n}_N$ has the property (S). Since 
$$ u_N^{a,n+1}(t,x)= u_0 + \int_0^t\int_{\R^d} g (t-s,x-y) \sigma\lp u_N^{a,n}(s,y)\rp\, L_N^a(\dd s , \dd y)\,,$$
we can use the same argument as in \cite[Lemma 18]{dalang99}, since the proof only relies on the fact that $L$ has a law that is invariant under translation in the space variable. 
\end{proof}

\begin{theo}\label{spacestationary}
 If $u_0(x)\equiv u_0$ is constant, for any $a\in \R$, the random field $(u(t,a+x)\colon (t,x)\in [0,T]\times \R^d)$ has the same law as the random field $(u(t,x)\colon (t,x)\in [0,T]\times \R^d)$. 
\end{theo}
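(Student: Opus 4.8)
The plan is to transfer the stationarity from the localized solutions $u_N^a$ (established in Lemma~\ref{eqinlaw}) to the genuine solution $u$ by taking $N\to\infty$. Recall from Proposition~\ref{existencerealline} that $u(t,x)=u_N(t,x)=u_N^0(t,x)$ on $\{t\I\tau_N\}=\{t\I\tau_N^0\}$, and that almost surely $\tau_N^0=+\infty$ for all large $N$. The key observation is that the same construction can be carried out around any spatial reference point $a\in\R^d$: by Proposition~\ref{lem1}, the solutions $u_N^a$ are consistent in $N$ on the events $\{t\I\tau_N^a\wedge\tau_{N'}^a\}$, so one can glue them into a random field $u^a$ with $u^a(t,x)=u_N^a(t,x)$ on $\{t\I\tau_N^a\}$. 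Since $L_N^a=L$ on $\{t\I\tau_N^a\}$ and $\tau_N^a\to+\infty$ a.s., this $u^a$ is a mild solution to \eqref{SHE} with the \emph{original} noise $L$; by the uniqueness part of the existence theory (uniqueness among predictable fields with the moment bound \eqref{moment1}, which $u^a$ inherits from Proposition~\ref{lem1}), we conclude $u^a=u$ up to modification.

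First I would make precise that $u^a$ is well-defined and equals $u$: on $\{t\I\tau_N^a\wedge\tau_M^0\}$ one has $L_N^a=L=L_M^0$, and running the Picard scheme exactly as in the proof of Proposition~\ref{lem1} (replacing $\tau_N^a\wedge\tau_N^b$ there by $\tau_N^a\wedge\tau_M^0$) yields $u_N^a(t,x)\mathds 1_{t\I\tau_N^a\wedge\tau_M^0}=u_M^0(t,x)\mathds 1_{t\I\tau_N^a\wedge\tau_M^0}=u(t,x)\mathds 1_{t\I\tau_N^a\wedge\tau_M^0}$ a.s. Letting $M\to\infty$ and then $N\to\infty$, and using that $\tau_M^0,\tau_N^a\to+\infty$ a.s., gives $u^a(t,x)=u(t,x)$ a.s.\ for each fixed $(t,x)$, hence as random fields after choosing good modifications.

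Next I would exploit property (S) from Lemma~\ref{eqinlaw}. That lemma gives that the joint law of $\big((u_N^a(t,a+x))_{t,x};(L_N^a([0,t]\times(a+B)))_{t,B}\big)$ does not depend on $a$; in particular the marginal law of $(u_N^a(t,a+x)\colon(t,x)\in[0,T]\times\R^d)$ equals that of $(u_N^0(t,x)\colon(t,x)\in[0,T]\times\R^d)$, for every fixed $N$. Now I would pass to the limit $N\to\infty$ on both sides. On the one hand, $u_N^0(t,x)\to u(t,x)$ for every $(t,x)$ (stationary convergence). On the other hand, $u_N^a(t,a+x)\to u^a(t,a+x)=u(t,a+x)$ for every $(t,x)$, by the previous paragraph. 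Finite-dimensional distributions are therefore preserved in the limit, and since a process law on $\R^{[0,T]\times\R^d}$ is determined by its finite-dimensional distributions, we get that $(u(t,a+x)\colon(t,x)\in[0,T]\times\R^d)$ has the same law as $(u(t,x)\colon(t,x)\in[0,T]\times\R^d)$, which is the claim.

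The main obstacle, and the step requiring the most care, is the identification $u^a=u$: one must verify that the gluing of the $u_N^a$ produces a bona fide mild solution driven by $L$ (not merely by the truncations $L_N^a$) and that it satisfies the moment condition under which uniqueness holds, so that the uniqueness statement of Proposition~\ref{existencerealline} can be invoked. Everything else is a routine limit argument once property (S) and the stationary convergence of $u_N^0$ and $u_N^a$ are in hand; no new estimates beyond those already in Propositions~\ref{existencerealline} and \ref{lem1} are needed.
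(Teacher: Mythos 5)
Your proposal is correct and follows essentially the same route as the paper: identify $u^a$ with $u=u^0$ by combining the consistency relation \eqref{asequality} (equivalently, the Picard-scheme argument on $\{t\I\tau_N^a\wedge\tau_M^0\}$) with the fact that the stopping times are eventually infinite, then transfer property (S) from Lemma~\ref{eqinlaw} through the stationary limit $N\to+\infty$. The only cosmetic difference is that the paper invokes \eqref{asequality} directly with $b=0$ rather than re-running the iteration or appealing to uniqueness, but the substance is identical.
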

\begin{proof}
By \eqref{asequality}, $u_N^a(t,a+x)\mathds 1_{t\I \tau_N^a\wedge \tau_N^0}= u_N^0(t,a+x)\mathds 1_{t\I \tau_N^a\wedge \tau_N^0}$ almost surely.  Taking the stationary limit as $N\to +\infty$, we get that $u^a(t,a+x)=u^0(t,a+x)$ almost surely for any $(t,x)\in [0,T]\times \R^d$. Also, by the property (S) of the family of random fields $(u_N^a\colon a\in \R^d)$ (see Lemma \ref{eqinlaw}), the random field $(u_N^a(t,a+x)\colon (t,x)\in [0,T]\times \R^d)$ has the same law as the random field $(u_N^0(t,x)\colon (t,x)\in [0,T]\times \R^d)$. Again, taking the stationary limit as $N\to +\infty$, we get that the random field $(u^a(t,a+x)\colon (t,x)\in [0,T]\times \R^d)$ has the same law as the random field $(u^0(t,x)\colon (t,x)\in [0,T]\times \R^d)$. Therefore, the random field $(u^0(t,a+x)\colon (t,x)\in [0,T]\times \R^d)$ has the same law as the random field $(u^0(t,x)\colon (t,x)\in [0,T]\times \R^d)$.
\end{proof}
 
%%%%%%%%%%%%%%%%%%%%%%%%%%%%%%%%%%%%%%%%%%%%
%%%%%%%%%%%%%%%%%%%%%%%%%%%%%%%%%%%%%%%%%%%%

\subsubsection{Existence of a càdlàg solution in $H_{r,loc}(\R^d)$ with $r<-\frac d 2$}\label{frac_sobolev_space}
In the following, we want to establish a regularity result for the paths of the mild solution to \eqref{SHE} in the case $D=\R^d$, analogous to Theorem~\ref{cadlagsolution-general} which concerns $D=[0,\pi]$. Since $D=\R^d$ is unbounded, and the solution may not decay in space (see Theorem~\ref{spacestationary}), we consider the mild solution $u\colon t \mapsto u(t, \cdot)$ as a distribution-valued process in a local fractional Sobolev space, and prove that it has a càdlàg version in this space.

Recall to this end the \emph{Schwartz space} $\Scd$ of smooth functions $\varphi\colon \R^d\to\R$ such that 
$\sup_{x\in \R^d}\left | x^\alpha \varphi^{(\beta)}(x) \right | <+\infty$ for any multi-indices $\alpha, \, \beta \in \N^d$, equipped with the topology induced by the semi-norms %$\m N_p$ on $\Scd$, where for $p\in \N$:
$\sum_{|\alpha|, |\beta| \I p} \sup_{x\in \R^d} \left | x^\alpha \varphi^{(\beta)}(x) \right |$ for $p\in\N$. Here, $\varphi^{(\beta)}(x) =\partial_{x_1}^{\beta_1}\dots\partial_{x_d}^{\beta_d}\varphi(x)$ if $\beta=(\beta_1,\dots,\beta_d)$.
Its topological dual is called the space of \emph{tempered distributions} and is denoted by $\temd$. The classical \emph{Fourier transform} $\m F( \varphi)(\xi):= \int_{\R^d} e^{-i\xi\cdot x}\varphi(x) \,\dd x$ with $\xi \in \R^d$ and $\varphi\in\Scd$ 
can be extended by duality to $f\in\temd$:
	\begin{equation*}
		\scal{\m F(f)}{\varphi}:= \scal{f}{\m F (\varphi)}\, , \quad  \varphi \in \Scd\, .
	\end{equation*}

\begin{df}\label{hsspace}
	The \emph{(local) fractional Sobolev space of order $r\in \R$} is defined by
	\begin{align*}
		H_r(\R^d)&:=\left \{ f\in \m S'(\R^d) \colon \xi \mapsto \lp 1+|\xi | ^2\rp ^{\frac r 2} \m F (f) (\xi) \in L^2(\R^d) \right \}\\
		\bigg(~	H_{r,\text{loc}}(\R^d)&:=\left \{ f\in \m S'(\R^d) \colon \lp \forall \theta \in C^\infty_c(\R^d) \colon \theta f \in H_r(\R^d) \rp \right \} ~ \bigg)\,. 
	\end{align*}
The topology on $H_r(\R^d)$ is induced by the norm $$\left \| f\right \|_{H_r(\R^d)}:=\left \| (1+|\cdot |^2)^{\frac r 2} \m F(f)(\cdot)  \right \|_{L^2(\R^d)}\,,$$ and we have $f_n\to f$ in $H_{r,loc}(\R^d)$ if $\theta f_n\to \theta f$ in $H_{r}(\R^d)$ as $n\to +\infty$ for any $\theta \in C^\infty_c(\R^d)$. 
\end{df}

We now proceed to studying the regularity of $u\colon t \mapsto u(t, \cdot)$ in $H_{r,{loc}}(\R^d)$. As in the case of a bounded interval in dimension one, the drift part is easy to handle.
\begin{lem}\label{drift-bdd}
	\label{drift}
	Let $Z$ be a bounded measurable random field and
	$$F(t,x):= \int_0^t \int_{\R^d} g(t-s, x-y) Z(s,y)\, \dd s\, \dd y\, .$$
	Then the process $t\mapsto F(t,\cdot)$ is continuous in $\hs$ for any $r\I 0$.
\end{lem}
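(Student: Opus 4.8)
The plan is to deduce the $\hs$-continuity from the elementary fact that $F$ is bounded and jointly continuous on $[0,T]\times\R^d$. Boundedness is immediate: since $Z$ is bounded and $\int_{\R^d}g(u,x)\,\dd x=1$ for every $u>0$, we have $\sup_{(t,x)\in[0,T]\times\R^d}|F(t,x)|\I T\|Z\|_\infty<+\infty$. For the topology, recall from Definition~\ref{hsspace} that $t\mapsto F(t,\cdot)$ is continuous in $\hs$ as soon as $t\mapsto\theta F(t,\cdot)$ is continuous in $H_r(\R^d)$ for every $\theta\in C^\infty_c(\R^d)$; and since $\theta F(t,\cdot)$ is bounded with compact support it lies in $L^2(\R^d)\subset H_r(\R^d)$, with $\|\theta F(t,\cdot)-\theta F(t',\cdot)\|_{H_r(\R^d)}\I C\|\theta F(t,\cdot)-\theta F(t',\cdot)\|_{L^2(\R^d)}$ because the Fourier weight $(1+|\cdot|^2)^{r/2}$ is $\I 1$ for $r\I 0$. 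So the whole statement reduces to (i) joint continuity of $F$ and (ii) a dominated-convergence argument passing from pointwise to $L^2$-convergence.

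The main work will be (i). Writing $F(t,x)=\int_0^t\int_{\R^d}g(u,x-y)Z(t-u,y)\,\dd y\,\dd u$, I would first estimate the spatial increments: for $t\in[0,T]$ and $h\in\R^d$,
$$|F(t,x+h)-F(t,x)|\I\|Z\|_\infty\int_0^T\big\|g(u,\cdot+h)-g(u,\cdot)\big\|_{L^1(\R^d)}\,\dd u\,,$$
where for each fixed $u>0$ the integrand tends to $0$ as $h\to0$ by continuity of translation in $L^1(\R^d)$ and is bounded by $2$; dominated convergence then yields a modulus of continuity in $x$ that is uniform in $t$. For the time increments, for $0\I t\I t'\I T$ I would split $F(t',x)-F(t,x)$ into the contribution of the layer $s\in[t,t']$, bounded in absolute value by $\|Z\|_\infty(t'-t)$, and the remaining part, which after the change of variables $v=t-s$ is bounded by $\|Z\|_\infty\int_0^T\|g(v+(t'-t),\cdot)-g(v,\cdot)\|_{L^1(\R^d)}\,\dd v$; again, for fixed $v>0$ the integrand tends to $0$ as $t'-t\to0$ by $L^1$-continuity of the heat semigroup and is bounded by $2$, so dominated convergence makes it small uniformly in $x$. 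Combining the two bounds gives the (uniform) joint continuity of $F$ on $[0,T]\times\R^d$.

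For step (ii), fix $\theta\in C^\infty_c(\R^d)$ with support $K$ and $t\in[0,T]$; then for $t'\to t$,
$$\|\theta F(t',\cdot)-\theta F(t,\cdot)\|_{L^2(\R^d)}^2=\int_K\theta(x)^2\,|F(t',x)-F(t,x)|^2\,\dd x\,,$$
and the integrand converges pointwise to $0$ by (i) while being dominated by the integrable function $4T^2\|Z\|_\infty^2\|\theta\|_\infty^2\mathds 1_K$; dominated convergence gives $\|\theta F(t',\cdot)-\theta F(t,\cdot)\|_{L^2(\R^d)}\to0$, hence convergence in $H_r(\R^d)$, and therefore $t\mapsto F(t,\cdot)$ is continuous in $\hs$. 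The only genuinely non-formal point is the joint continuity of $F$, and within it the one item requiring (routine) attention is the behaviour near $u=0$ and $v=0$ of the integrals $\int_0^T\|g(u,\cdot+h)-g(u,\cdot)\|_{L^1}\,\dd u$ and $\int_0^T\|g(v+\eps,\cdot)-g(v,\cdot)\|_{L^1}\,\dd v$, which causes no trouble because the integrands are uniformly bounded by $2$ and converge pointwise, so dominated convergence applies directly.
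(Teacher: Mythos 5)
Your proof is correct and follows essentially the same route as the paper: establish joint continuity of $F$ on $[0,T]\times\R^d$, deduce continuity of $t\mapsto F(t,\cdot)$ in $H_{0,\text{loc}}(\R^d)=L^2_{\text{loc}}(\R^d)$, and conclude for all $r\I 0$ since the Fourier weight is bounded by $1$. The only difference is that you prove the joint continuity by hand via the $L^1$-continuity of translations and of the heat semigroup, whereas the paper simply cites a result of Bogachev for this step.
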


\begin{proof}
	%Let $u\I t \in[0,T]$, and let $x,z \in \R^d$. By the boundedness of $Z$, and using \cite[Lemme A2]{loubert} and \cite[Lemme A3]{loubert},
	%\begin{align*}
	%	\left | F(t,x)-F(u,z) \right | &\I C\lp \int_0^u \int_{\R^d} \left |  g(t-s, x-y) - g(u-s, z-y) \right | \dd s \dd y \right .\\
	%	&\qquad \left. + \int_u^t \int_{\R^d} \left |  g(t-s, x-y)\right | \dd s \dd y\rp\\
	%	&\I C \lp \int_0^u \int_{\R^d} \left |  g(t-s, x-y) - g(u-s, x-y) \right | \dd s \dd y \right. \\
	%	&\qquad \left. +\int_0^u \int_{\R^d} \left |  g(u-s, x-y) - g(u-s, z-y) \right | \dd s \dd y + (t-u)\rp\\
	%	&\I C\lp |t-u|\log(|t-u|) + |x-z|+ |t-u|\rp\, .
	%\end{align*}
	%The case $t\I u$ is similar, and we deduce that $F$ has continuous sample paths. 
	Since $g\in L^1([0,T]\times\R^d)$ and $Z$ is bounded, it follows from \cite[Corollary~3.9.6]{Bogachev} that the sample paths of $F$ are jointly continuous in $(t,x)$ almost surely. Therefore, $t\mapsto F(t,\cdot)$ is continuous in $H_{0,{loc}}(\R^d)=L^2_{{loc}}(\R^d)$, hence also in $\hs$ for any $r\I 0$. 
\end{proof}

Next, we consider the situation where $\sigma$ is a bounded function. Already in this restricted case, the unboundedness of space and the possibility of having infinitely many large jumps require a more careful analysis of the different parts of the solution.

\begin{prop}\label{cadlagsolutionbddsigma}
Let  $\sigma$ be a bounded and Lipschitz function, and $u$ be the mild solution to the stochastic heat equation \eqref{SHE} constructed in Proposition \ref{existencerealline} under hypothesis \hyperlink{hyp1}{\textbf{(H)}}. Then, for any $r<-\frac d 2$, the stochastic process $(u(t, \cdot))_{t\in [0,T]}$ has a càdlàg version in $H_{r,\text{loc}}(\R^d)$. 
\end{prop}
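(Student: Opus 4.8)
The plan is to follow the proof of Proposition~\ref{cadlagsolutioninterval}, with the Fourier transform on $\R^d$ playing the role of the Fourier sine expansion, and to deal with the two genuinely new features of the whole-space setting --- the unbounded domain and the infinitely many large jumps --- by localizing with a cutoff $\theta\in C_c^\infty(\R^d)$ and by isolating the large-jump part of the solution. First I would reduce to a truncated noise: by Proposition~\ref{existencerealline}, $u=u_N$ on $\{t\I\tau_N\}$ and $\tau_N=+\infty$ for $N$ large almost surely, so it suffices to show that each $u_N$ has a càdlàg version in $H_{r,\mathrm{loc}}(\R^d)$. Writing $L_N=b\,\dd t\,\dd x+\int_{|z|\I1}z\,\tilde J+\int_{1<|z|<Nh(x)}z\,J=:L_N^B+L_N^M+L_N^P$ and, correspondingly, $u_N=(V+u_N^B)+u_N^M+u_N^P$, Lemma~\ref{drift-bdd} (with $Z=\sigma(u_N(\cdot,\cdot))$, which is bounded) and the joint continuity of $V$ show that $t\mapsto V(t,\cdot)+u_N^B(t,\cdot)$ is continuous in $H_{r,\mathrm{loc}}(\R^d)$. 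Since $H_{r,\mathrm{loc}}$-valued paths are tested against every $\theta\in C_c^\infty(\R^d)$ and multiplication by a fixed such $\theta$ is bounded on $H_r(\R^d)$, it remains to fix $\theta\in C_c^\infty(\R^d)$ and prove that $t\mapsto\theta u_N^M(t,\cdot)$ and $t\mapsto\theta u_N^P(t,\cdot)$ are càdlàg in $H_r(\R^d)$.

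For the large-jump part, enumerating the atoms $(T_i,X_i,Z_i)$ of $J$ with $1<|Z_i|<Nh(X_i)$ gives $\theta u_N^P(t,\cdot)=\sum_i\theta\,g(t-T_i,\cdot-X_i)\,\sigma(u_N(T_i,X_i))\,Z_i\,\mathds 1_{T_i\I t}$. Each summand is càdlàg in $H_r(\R^d)$: it vanishes for $t<T_i$, and since $r<-\frac d2$, dominated convergence gives $\|g(\tau,\cdot-X_i)-\delta_{X_i}\|_{H_r}^2=\int_{\R^d}(1+|\xi|^2)^r(1-e^{-\tau|\xi|^2})^2\,\dd\xi\to0$ as $\tau\downarrow0$, so the path is right-continuous with a single jump at $T_i$, and it is continuous for $t>T_i$ by the same estimate. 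Integration by parts in the Fourier variable yields, for every $M$, $\sup_{0<\tau\I T}\|\theta\,g(\tau,\cdot-X_i)\|_{H_r}\I C_M(1+|X_i|)^{-M}$; together with hypothesis \hyperlink{hyp1}{\textbf{(H)}} and the growth $h(x)=1+|x|^\eta$ this gives $\sum_i|Z_i|(1+|X_i|)^{-M}\mathds 1_{1<|Z_i|<Nh(X_i)}<+\infty$ almost surely for $M$ large (take expectations and integrate $\nu$). Hence the series converges in $H_r(\R^d)$ uniformly in $t\in[0,T]$, and a uniform limit of càdlàg paths is càdlàg.

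The heart of the proof is $u_N^M$. By a stochastic Fubini argument (Theorem~\ref{fubini}), $\m F(\theta u_N^M(t,\cdot))(\xi)=\int_0^t\int_{\R^d}\int_{|z|\I1}\psi_{t-s,\xi}(y)\,\sigma(u_N(s,y))\,z\,\tilde J(\dd s,\dd y,\dd z)$ for each $\xi$, where $\psi_{\tau,\xi}(y)=\int_{\R^d}e^{-i\xi\cdot x}\theta(x)g(\tau,x-y)\,\dd x$. The crucial identity is $\m F_y\psi_{\tau,\xi}(\zeta)=e^{-\tau|\zeta|^2}\widehat\theta(\zeta+\xi)$, which yields $\|\psi_{\tau,\xi}\|_{L^2(\R^d)}\I\|\theta\|_{L^2}$ and $\|\psi_{\tau,\xi}\|_{L^\infty(\R^d)}\I\|\theta\|_{L^\infty}$ uniformly in $\tau$ and $\xi$: contrary to the bare kernel $g(\tau,\cdot)$, whose squared $L^2$-norm is $\sim\tau^{-d/2}$ and fails to be $\dd\tau$-integrable when $d\s2$, the localized kernel $\psi_{\tau,\xi}$ is uniformly bounded in $L^2\cap L^\infty$, and this is exactly what compensates for the finite second moments that are available only in dimension $d=1$. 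With $\sigma$ bounded and $\int_{|z|\I1}(z^2+z^4)\,\nu(\dd z)<+\infty$, I would then --- exactly as in the proof of Proposition~\ref{cadlagsolutioninterval} --- establish second- and fourth-moment bounds (via the moment inequalities of \cite[Theorem~1]{marinelli}) and a product formula over disjoint time intervals for the stochastic integrals that make up $\m F(\theta u_N^M(t,\cdot))(\xi)$ and its time increments, all of them controlled through $\int_{\R^d}|\psi_{\tau,\xi}(y)|^2\,\dd y=(2\pi)^{-d}\int_{\R^d}e^{-2\tau|\zeta|^2}|\widehat\theta(\zeta+\xi)|^2\,\dd\zeta$ and its $L^4$-analogue. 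Splitting $\m F(\theta(u_N^M(t\pm h,\cdot)-u_N^M(t,\cdot)))(\xi)$ into contributions over $[0,t\mp h]$, over the short interval, and (for $t+h$) over $[t,t+h]$ --- precisely as $a_k(t+h)-a_k(t)$ was split in Proposition~\ref{cadlagsolutioninterval} --- and then integrating in $\xi$ against $(1+|\xi|^2)^r\,\dd\xi$, using the Cauchy--Schwarz inequality against the measure $|\widehat\theta(\xi-\zeta)|\,\dd\zeta$ (with $\|\widehat\theta\|_{L^1}<+\infty$) and Peetre's inequality $(1+|\xi|^2)^r\I C(1+|\zeta+\xi|^2)^{|r|}(1+|\zeta|^2)^r$ to transfer the $\xi$-weight onto the $\zeta$-weight (which is integrable precisely because $r<-\frac d2$), one reaches the same conclusion as in the interval case: for any $r+\delta<-\frac d2$,
$$\E\Big[\big\|\theta\big(u_N^M(t+h,\cdot)-u_N^M(t,\cdot)\big)\big\|_{H_r}^2\,\big\|\theta\big(u_N^M(t-h,\cdot)-u_N^M(t,\cdot)\big)\big\|_{H_r}^2\Big]\I C\,h^{1+\delta}.$$
Since $t\mapsto\theta u_N^M(t,\cdot)$ is also continuous in $L^2(\Omega;H_r(\R^d))$ (which follows from the same moment bounds) and $\theta u_N^M(t,\cdot)\in H_r(\R^d)$ a.s., \cite[Chapter~III, \S4, Theorem~1]{skorohod} gives a càdlàg version in $H_r(\R^d)$; as $\theta$ was arbitrary, $t\mapsto u_N^M(t,\cdot)$ has a càdlàg version in $H_{r,\mathrm{loc}}(\R^d)$, and combining the three parts finishes the proof.

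I expect the main obstacle to be precisely the failure of finite second moments of the stochastic convolution when $d\s2$: because $\int_0^T\int_{\R^d}g^2(t,x)\,\dd x\,\dd t=+\infty$, the pointwise-in-$x$ estimates of the interval case collapse, and the cutoff $\theta$ has to be built into every moment bound from the outset --- the ``elementary building block'' is no longer a single stochastic integral against a clean eigenfunction kernel, but the Fourier transform $\m F(\theta u_N^M(t,\cdot))(\xi)$ itself, whose kernel $\psi_{\tau,\xi}$ entangles the cutoff with the semigroup. Making the resulting $\xi$-integrals converge hinges on the Schwartz decay of $\widehat\theta$ and on the integrability of $(1+|\xi|^2)^r$ for $r<-\frac d2$, and the fourth-moment estimates, transparent with Fourier sine coefficients in the interval case, now require the additional Cauchy--Schwarz step against $|\widehat\theta|$ in order to be brought back to the one-dimensional-type computations of Proposition~\ref{cadlagsolutioninterval}.
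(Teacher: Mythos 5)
Your overall architecture (truncate the noise via $\tau_N$, split off the drift and the large jumps, run the Fourier/Skorokhod argument of Proposition~\ref{cadlagsolutioninterval} on the small-jump martingale part) matches the paper's, and your treatment of the drift and of the large-jump sum is sound. The gap is in the central step, the martingale part $u_N^M$. You localize by multiplying the \emph{solution} by $\theta$, so that $\m F(\theta u_N^M(t,\cdot))(\xi)=\int_0^t\int_{\R^d}\psi_{t-s,\xi}(y)\sigma(u(s,y))\,L^M(\dd s,\dd y)$ with the kernel $\psi_{\tau,\xi}$ entangling $\theta$ with the semigroup. This object is indeed square-integrable, but your route to the bound $\E[\|\cdot\|_{H_r}^2\|\cdot\|_{H_r}^2]\I Ch^{1+\delta}$ --- Cauchy--Schwarz against $|\widehat\theta(\xi+\zeta)|\,\dd\zeta$ plus Peetre's inequality, to ``bring back the one-dimensional-type computations'' --- implicitly passes through the unlocalized Fourier coefficients $a_\zeta(t)=\int_0^t\int_{\R^d}e^{-i\zeta\cdot y}e^{-(t-s)|\zeta|^2}\sigma(u(s,y))\,L^M(\dd s,\dd y)$ with the $y$-integral over all of $\R^d$; these have infinite variance (their Daniell mean involves $\int_0^t\int_{\R^d}e^{-2(t-s)|\zeta|^2}\,\dd y\,\dd s=+\infty$), so the convolution identity $\m F(\theta u_N^M)=(2\pi)^{-d}\,\widehat\theta*\m F(u_N^M)$ cannot be given a pointwise-in-$\zeta$, $L^2$ meaning and the transfer of weights is not justified. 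If instead you attack $\psi_{\tau,\xi}$ directly, you lose the multiplicative factorization $a_\xi(t)=e^{-|\xi|^2t}I_0^t(\xi)$ on which the whole $A_1,\dots,A_4$ increment analysis of Proposition~\ref{cadlagsolutioninterval} rests: the factor $(1-e^{-|\xi|^2h})$ is no longer a deterministic prefactor that can be traded for powers of $h$ and $|\xi|$ \emph{outside} the fourth moments, but sits inside the kernel of a stochastic integral, and recovering $h^{1+\delta}$ then requires new estimates (e.g.\ Hausdorff--Young in $\zeta$ with the $(1-e^{-h|\zeta|^2})$ factors kept inside the $\zeta$-integrals at the correct exponents) that you have not supplied; ``exactly as in Proposition~\ref{cadlagsolutioninterval}'' is therefore not accurate as stated.

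The paper's device is to cut the \emph{noise} rather than the solution: write $u_N^M=u^{1,1}+u^{1,2}$, where $u^{1,1}$ integrates $L^M$ only over $y\in[-2A,2A]^d$ and $u^{1,2}$ over the complement. Then $\m F(u^{1,1}(t,\cdot))(\xi)=e^{-|\xi|^2t}\int_0^t\int e^{-i\xi\cdot y}e^{s|\xi|^2}\sigma(u(s,y))\mathds 1_{y\in[-2A,2A]^d}\,L^M(\dd s,\dd y)$ is a genuine $L^2$ (and $L^4$) random variable because the spatial domain of the integrand is bounded, the factorization is restored, and the interval proof transfers verbatim under $\sin(ky)\leftrightarrow e^{-i\xi\cdot y}$; the far part $u^{1,2}$ is jointly continuous on $[0,T]\times[-A,A]^d$ because the heat kernel is smooth off the diagonal. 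You should either adopt this near/far splitting of the stochastic integral (keeping your treatment of the drift and of the large jumps, which is fine) or supply the missing direct estimates for $\psi_{\tau,\xi}$.
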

\begin{proof} 
	Since the mild solution $u_N$ to the truncated equation \eqref{truncatedmild} agrees with the mild solution $u$ to the stochastic heat equation \eqref{SHE} on $\left \{t \I \tau_N\right \}$ (see the last statement of Proposition~\ref{existencerealline}), the sample path properties of $u$ and $u_N$ are the same, and we can restrict to the study of the regularity of the sample paths of $u_N$. Furthermore, there is no loss of generality if we take $N=1$. Therefore, we suppose that 
	$$u(t,x)=V(t,x)+\int_0^t \int_{\R^d} g(t-s,x-y) \sigma (u(s,y)) \, L_1(\dd s, \dd y)\, ,$$
	where $L_1$ is the truncated noise from \eqref{LN} with $N=1$. We use the decomposition
%As just explained, it suffices to consider the noise $L_1$ with mild solution $u=u_1$ given by  %For notational convenience, we write $Z(s,y)=\sigma\lp u_1(s,y)\rp$ in the sequel. By Lemma~\ref{drift}, it remains to consider We separate the random field $u$ into three parts, that are each treated separately. Let $A>0$.
\begin{equation}\label{def-u}
u(t,x)=V(t,x)+u^{1,1}(t,x)+u^{1,2}(t,x) +u^{2,1}(t,x)+u^{2,2}(t,x) +u^3(t,x)\,,
\end{equation}
where for $A>0$ and $Z(s,y):=\sigma(u(s,y))$, 
\begin{align*}
u^{1,1}(t,x)&:=\int_0^t \int_{\R^d} g(t-s, x-y)Z(s,y) \mathds 1_{y \in [-2A, 2A]^d}\, L^M(\dd s , \dd y)\,,\\
u^{1,2}(t,x)&:=\int_0^t \int_{\R^d} g(t-s, x-y)Z(s,y) \mathds 1_{y \notin [-2A, 2A]^d} \, L^M(\dd s, \dd y)\,,\\
u^{2,1}(t,x)&:=\int_0^t \int_{\R^d}  g(t-s, x-y)Z(s,y)  \mathds 1_{y \in [-2A, 2A]^d}  \, L_1^P(\dd s , \dd y)\,,\\
u^{2,2}(t,x)&:=\int_0^t \int_{\R^d}  g(t-s, x-y)Z(s,y)  \mathds 1_{y \notin [-2A, 2A]^d}  \, L_1^P(\dd s , \dd y)\,,\\
u^{3}(t,x)&:=b\int_0^t \int_{\R^d}  g(t-s, x-y)Z(s,y)\,\dd s\,\dd y\, ,
\end{align*}
where $L^M$ is defined in \eqref{noise}, and $L^P_1$ is the noise obtained by applying the truncation \eqref{LN} with $N=1$ to $L^P$ from \eqref{noise}.
It is clear that $V$ is jointly continuous in $(t,x)$, and the same holds for $u^3$ as pointed out in the proof of Lemma~\ref{drift}. Furthermore, on $[0,T]\times [-2A,2A]^d$, the noise $L^P_1$ consists of only finitely many jumps. So upon a change of the drift $b$ and increasing the truncation level for $L^M$ from $1$ to the largest size of these jumps (which clearly does not affect the arguments below), we may assume that $u^{2,1}=0$. The remaining terms are now treated separately. 

\vspace{0.25\baselineskip}
\noindent
\underline{$u^{1,1}(t,x)$:} %Since we are only integrating over a compact set in space, we can use again a stopping argument and assume that there is no jump larger than $1$. Then we rewrite 
By definition of the Fourier transform, we have for 
%\begin{equation*}
% u^1(t,x)= \int_0^t \int_{\R^d} \int_\R g(t-s, x-y)Z(s,y) z\mathds 1_{|z|\I 1} \mathds 1_{y \in [-2A, 2A]^d} \, \tilde J(\dd s , \dd y, \dd z)\, .
%\end{equation*}
$\varphi \in \Scd$,
\begin{equation*}
\begin{aligned}
\scal{\m F\lp u^{1,1}(t,\cdot)\rp}{\varphi} &= \scal{ u^{1,1}(t,\cdot)}{\m F \lp \varphi \rp}=\int_{\R^d} u^{1,1}(t,x)\m F(\varphi) (x) \,\dd x\\
 & =\int_{\R^d} \lp \int_0^t \int_{\R^d} g(t-s, x-y)Z(s,y)  \mathds 1_{y \in [-2A, 2A]^d} \,L^M(\dd s , \dd y)\rp \m F(\varphi) (x)\, \dd x\, .
\end{aligned}
\end{equation*}
Permuting the stochastic integral and the Lebesgue integral (because $\int_{|z|\I 1} |z|^p \,\nu(\dd z)<+\infty$, $g\in L^p([0,T]\times\R^d)$ and $Z$ is bounded, this  is possible by Theorem~\ref{fubini} together with the estimate \eqref{p-est}) yields
\begin{equation}\label{fourier-calc}
	\begin{aligned}
\scal{\m F( u^{1,1}(t,\cdot))}{\varphi} &=\int_0^t \int_{[-2A, 2A]^d}  \lp  \int_{\R^d}\m F(\varphi)(x)  g(t-s, x-y) \,\dd x \rp Z(s,y)  \,L^M(\dd s , \dd y)\\
& =\int_0^t \int_{[-2A, 2A]^d} \lp  \int_{\R^d} e^{-i\xi\cdot y -(t-s)|\xi |^2} \varphi(\xi) \, \dd \xi \rp Z(s,y)   \,L^M(\dd s , \dd y)\\
&= \int_{\R^d} \lp \int_0^t \int_{[-2A, 2A]^d}   e^{-i\xi\cdot y -(t-s)|\xi |^2}  Z(s,y)   \,L^M(\dd s , \dd y\rp \, \varphi(\xi) \, \dd \xi  \,, 
\end{aligned}
\end{equation}
which implies that $\m F( u^{1,1}(t, \cdot)) (\xi)$ is given by
\begin{equation}\label{fourier}
	\begin{aligned}
	a_\xi(t):=e^{-|\xi|^2 t}\int_0^t \int_{\R^d} e^{-i\xi \cdot y}e^{ s |\xi|^2} Z(s,y)  \mathds 1_{y \in [-2A, 2A]^d} \,L^M(\dd s , \dd y)\,.
	\end{aligned}
\end{equation}
 Thus,
\begin{equation*}
 \left \| u^{1,1}(t\pm h,\cdot)-u^{1,1}(t,\cdot)\right \|_{H_r(\R^d)}^2=\int_{\R^d} (1+|\xi|^2)^r \left | a_\xi(t\pm h)-a_\xi(t) \right |^2\,\dd \xi\, .
\end{equation*}
%and
%\begin{equation*}
% \left \| u^{1,1}(t-h,\cdot)-u^{1,1}(t,\cdot)\right \|_{H_r(\R^d)}^2=\int_{\R^d} (1+|\xi|^2)^r \left | a_\xi(t-h)-a_\xi(t) \right |^2\,\dd \xi\, .
%\end{equation*}
Since the function $t\mapsto e^{-|\xi|^2 t}$ is continuous, and the stochastic integral in $a_\xi(t)$ exists in $L^2(\Omega)$, $t\mapsto a_\xi(t)$ is continuous in $L^2(\Omega)$. Furthermore,
\begin{equation*}
 \E \lc\left | a_\xi(t) \right |^2 \rc \I C \frac{1-e^{-2|\xi|^2 t}}{2|\xi |^2} \I C\, ,
\end{equation*}
for some constant $C$ that does not depend on $\xi$, so by the dominated convergence theorem (which applies since $r<-\frac d 2$),
\begin{equation*}
 \E \lc \left \| u^{1,1}(t+h,\cdot)-u^{1,1}(t,\cdot)\right \|_{H_r(\R^d)}^2 \rc \to 0 \, , \qquad \text{as} \ h\to 0\, ,
\end{equation*}
and the process $t\mapsto u^{1,1}(t,\cdot)$ is continuous in $L^2(\Omega)$ as a process with values in $H_r(\R^d)$.

In order to apply \cite[Chapter III, \S4, Theorem~1]{skorohod} to deduce the existence of a càdlàg modification of $t\mapsto u^{1,1}(t,\cdot)$ in $H_r(\R^d)$ for any $r<-\frac d 2$, it remains to prove 
\begin{equation*}
	\E\lc \|u^{1,1}(t+h, \cdot)-u^{1,1}(t,\cdot)\|_{H_r(\R^d)}^2 \| u^{1,1}(t-h, \cdot)-u^{1,1}(t,\cdot)\|_{H_r(\R^d)}^2\rc \I C h^{1+\delta}
\end{equation*}
for some $\delta>0$. Upon defining, similar to \eqref{int}, 
\begin{equation*}
	I_a^b(\xi):=\int_a^b \int_{\R^d} \int_\R e^{-i\xi \cdot y}e^{ s |\xi|^2} Z(s,y) z\mathds 1_{|z|\I 1} \mathds 1_{y \in [-2A, 2A]^d} \,\tilde J(\dd s , \dd y, \dd z)
\end{equation*}
for $0\I a<b \I T$ and $\xi \in \R^d$, the proof is identical to that of Proposition \ref{cadlagsolutioninterval} for the equation on a bounded interval if we make the following replacements:
\begin{align*}
 [0,\pi] \longleftrightarrow [-2A, 2A]^d \,,\quad k \longleftrightarrow \xi \, ,\quad
 \sin(ky) \longleftrightarrow e^{-i \xi\cdot y}\, .
\end{align*}

\vspace{0.25\baselineskip}
\noindent
\underline{$u^{1,2}(t,x)$:} If $f\colon\R^d\to \R$ is a smooth function, then for  $a,b \in \R^d$ with $a_i\I b_i$ for all $1\I i \I d$,
\begin{equation}\label{formula}
  f(b)= f(a)+\sum_{i=1}^d \, \sum_{1\I k_1<\dots <k_i\I d}\, \int_{a_{k_1}}^{b_{k_1}} \dd r_{k_1} \dots \int_{a_{k_i}}^{b_{k_i}} \dd r_{k_i} \partial_{x_{k_1} \dots x_{k_i}} f( c_{\mathbf k}(a,r) ) \, ,
\end{equation}
where for $\mathbf k=(k_1, \dots, k_i)$ with $k_1<\dots < k_i$, we define $\lp c_{\mathbf k}(a,r)\rp_j:= a_i \mathds 1_{j\notin \mathbf k} +r_j \mathds 1_{j \in \mathbf k}$ for $1\I j\I d$. This formula is easily proved by induction on the dimension. Since the heat kernel $g(t-s,x-y)$ is smooth on $y\notin [-2A, 2A]^d$ for $x\in [-A, A]^d$, \eqref{formula} with $a=(s,-A, \dots , -A)$ and $b=(t,x)$ gives
\begin{equation*}%\label{formula2}
\begin{aligned}
  g(t-s,x-y) &=\sum_{i=1}^{d+1} \, \sum_{1\I k_1<\dots <k_i\I d+1} \,  \int_{a_{k_1}}^{b_{k_1}} \dd r_{k_1} \dots \int_{a_{k_i}}^{b_{k_i}} \dd r_{k_i}  \partial_{x_{k_1} \dots x_{k_i}} g ( c_{\mathbf k}(a,r)-(s,y) )\\
 & =\sum_{i=1}^{d} \, \sum_{1\I k_1<\dots <k_i\I d} \, \int_s^t \dd u \int_{-A}^{x_{k_1}} \dd r_{k_1} \dots \int_{-A}^{x_{k_i}} \dd r_{k_i}  \partial_{x_{k_1} \dots x_{k_i}} \partial_t g( u-s , c_{\mathbf k}(-\mathbf A,r)- y ) \, ,
\end{aligned}
\end{equation*}
where $\mathbf A:= (A,\dots , A)$.
Another application of Theorem~\ref{fubini} and \eqref{p-est} shows that $u^{1,2}(t,x)$ equals
\begin{equation}\label{Fubini-used}
\begin{aligned}
  %&= \int_0^t \int_{\R^d} g(t-s, x-y)Z(s,y) \mathds 1_{y \notin [-2A, 2A]^d} \, L^M(\dd s, \dd y)\\
&\sum_{i=1}^{d} \, \sum_{1\I k_1<\dots <k_i\I d} \, \int_0^t \dd u \int_{-A}^{x_{k_1}} \dd r_{k_1} \dots \int_{-A}^{x_{k_i}} \dd r_{k_i} \\
 &\qquad \lp \int_0^u \int_{\R^d}   \partial_{x_{k_1}}\dots \partial_{x_{k_i}} \partial_t g\lp u-s , c_{\mathbf k}(-\mathbf A,r)- y \rp Z(s,y) \mathds 1_{y \notin [-2A, 2A]^d} \, L^M(\dd s , \dd y) \rp\, .
\end{aligned}
\end{equation}
We see from this expression that $u^{1,2}$ is jointly continuous in $(t,x)$. By the argument at the end of the proof of Lemma \ref{drift}, we deduce that $t\mapsto u^{1,2}(t, \cdot)\mathds 1_{[-A,A]^d}$ is continuous in $H_r(\R^d)$ for  $r\I 0$.

\vspace{0.25\baselineskip}

\noindent
\underline{$u^{2,2}(t,x)$:} This process takes into account only the jumps that are far away from $x$, but that can be arbitrarily large. We can write $u^{2,2}$ as a sum:
\begin{align*}
 u^{2,2}(t,x)&=\sum_{i\s 1} g(t-T_i, x-X_i) Z(T_i, X_i) Z_i \mathds 1_{X_i \notin [-2A, 2A]^d \, , \, 1< |Z_i| < 1+ |X_i|^\eta \, , \, T_i \I t}\, .
\end{align*}
We first observe that each term of this sum is jointly continuous in $(t,x) \in [0,T]\times [-A,A]^d$ almost surely. We show that this sum converges uniformly in $(t,x)\in [0,T]\times [-A,A]^d$. Choose $A$ large enough such that $T< \frac{A^2}{2d} $. Because $|x-X_i|>A$, Lemma~\ref{maxheatkernel} below shows that the maximum of the function $t\mapsto g(t, x-X_i)$ is attained at $t=T$:
\begin{align*}
 \sup_{t\I T, x\in [-A,A]^d} g(t-T_i, x-X_i) &\I \sup_{x\in [-A,A]^d}  C {T^{-\frac d 2}} e^{-\frac{|x-X_i|^2}{4T}}\I  C {T^{-\frac d 2}} e^{-\frac{|p_A(X_i)-X_i|^2}{4T}}\, ,
\end{align*}
where $p_A$ is the projection on the convex set $[-A, A]^d$. Then, for $\beta=1 \wedge q$,
\begin{equation}\label{u22}
\begin{aligned}
 \E & \Bigg[  \Bigg( \sum_{i\s 1} \sup_{t\I T, x\in [-A,A]^d} \left | g(t-T_i, x-X_i) Z(T_i, X_i) Z_i \mathds 1_{X_i \notin [-2A, 2A]^d \, , \, 1< |Z_i| < 1+ |X_i|^\eta \, , \, T_i \I t} \right | \Bigg)^\beta \Bigg]\\ 
 &\hspace{1cm} \I   \frac C {T^{\frac {\beta d} 2}} \E \Bigg[ \Bigg(  \sum_{i\s 1}  \left | e^{-\frac{|p_A(X_i)-X_i|^2}{4T}} Z_i \mathds 1_{X_i \notin [-2A, 2A]^d \, , \, 1< |Z_i| \, , \, T_i \I T} \right | \Bigg)^\beta \Bigg] \\
 &\hspace{1cm} \I  \frac C {T^{\frac {\beta d} 2}} \E \lc  \sum_{i\s 1}  \left | e^{-\frac{|p_A(X_i)-X_i|^2}{4T}} Z_i \mathds 1_{X_i \notin [-2A, 2A]^d \, , \, 1< |Z_i| \, , \, T_i \I T } \right |^\beta\rc \\
 & \hspace{1cm} \I C \int_0^T \int_{y\notin [-2A, 2A]^d} \int_{|z|>1} |z|^\beta e^{-\beta\frac{|p_A(y)-y|^2}{4T}}\, \dd s \, \dd y \, \nu( \dd z) <+\infty\, .
\end{aligned}
\end{equation}
Therefore, the sum defining $u^{2,2}$ converges uniformly in $(t,x) \in [0,T]\times [-A,A]^d$, and $u^{2,2}$ is jointly continuous. Thus, $t\mapsto u^{2,2}(t, \cdot)\mathds 1_{[-A,A]^d}$ is continuous in $H_r(\R^d)$ for every $r\I 0$.

Since $A$ can be chosen arbitrarily large, the assertion of the proposition follows.
\end{proof}

In order to pass from bounded to unbounded nonlinearities $\sigma$, the basic strategy remains the same as in the proof of Theorem~\ref{cadlagsolution-general}. However, it was crucial in that proof that the solution have a finite second moment. Unfortunately, in dimensions $d\s 2$, the mild solution $u$ to \eqref{SHE} has no finite second moments as a result of the singularity of $g$. And it is easy to convince oneself that taking powers $p<2$ instead of $2$ does not combine well with the $\|\cdot\|_{H_r(\R^d)}$-norms. Instead, in the proof we propose below, the idea is to consider an equivalent probability measure $\Q$ (which obviously does not affect the path properties of $u$) under which the solution has a finite second moment. Although $L$ might not be a Lévy noise under $\Q$ anymore, it follows from the theory of integration against random measures, which we briefly recall in the Appendix, that there exists a particularly clever choice of $\Q$ such that we have sufficient control on the second moments of both integrands and integrators under $\Q$. 

\begin{theo}\label{cadlagsolution}
%Let $\sigma$ be a Lipschitz continuous function, $u_0$ be bounded and continuous, and $L$ be a pure jump Lévy white noise as in \eqref{noise} satisfying \hyperlink{hyp1}{\textbf{(H)}} for some $p,q>0$. %Assume also that there is $\gamma>0$ with $2\gamma<q$ such that $|\sigma(x)|\I C(1+|x|^\gamma)$  for all $x \in \R$. 
If $u$ is the mild solution to the stochastic heat equation \eqref{SHE} constructed under the assumptions of Proposition \ref{existencerealline}, then, for any $r<-\frac d 2$, the stochastic process $(u(t, \cdot))_{t\in [0,T]}$ has a càdlàg version in $H_{r,\text{loc}}(\R^d)$. 
\end{theo}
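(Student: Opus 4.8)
The plan is to run the same program as in Proposition~\ref{cadlagsolutionbddsigma} and Theorem~\ref{cadlagsolution-general}; the only genuinely new difficulty — the absence of a finite second moment of $u$ when $d\s2$ — will be overcome by passing to an equivalent probability measure.

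First I would reduce, exactly as at the beginning of the proof of Proposition~\ref{cadlagsolutionbddsigma}, to the truncated solution: since $u=u_N$ on $\{t\I\tau_N\}$ and $\tau_N=+\infty$ for large $N$ almost surely (Proposition~\ref{existencerealline}), the path properties of $u$ are those of $u_N$, and one may take $N=1$. Fix $A>0$ and write $u=u_1$ through \eqref{def-u} as $V+u^{1,1}+u^{1,2}+u^{2,1}+u^{2,2}+u^{3}$. The terms $V$ and $u^{3}$ are jointly continuous in $(t,x)$; $u^{2,1}$ is reduced to $0$ by modifying the drift and raising the $L^M$-truncation level, since $L^P_1$ has only finitely many jumps on $[0,T]\times[-2A,2A]^d$; and $u^{1,2}$, $u^{2,2}$ are handled as in Proposition~\ref{cadlagsolutionbddsigma}, the boundedness of $\sigma$ being replaced by the moment bound \eqref{moment1}. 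For example, in \eqref{u22} one estimates $\E[|\sigma(u(T_i,X_i))|^\beta]$, $\beta=1\wedge q\I p$, by $C(1+\E[|u(T_i,X_i)|^{p}]^{\beta/p})$; since the choice $\frac{d}{q}<\eta<\frac{2-d(p-1)}{p-q}$ of Proposition~\ref{existencerealline} makes the growth of the right-hand side of \eqref{moment1} in $|X_i|$ of order $\exp(c|X_i|^{\gamma})$ with $\gamma=\frac{2\eta(p-q)}{2-d(p-1)}<2$, this is dominated by the Gaussian factor $e^{-\beta|p_A(X_i)-X_i|^2/(4T)}$, so the series defining $u^{2,2}$ still converges uniformly and $u^{2,2}$ is jointly continuous, and $u^{1,2}$ follows likewise with \eqref{p-est} in the Fubini step. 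Everything then reduces to showing that $t\mapsto u^{1,1}(t,\cdot)$ has a càdlàg version in $H_r(\R^d)$ for every $r<-\frac d2$.

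For $u^{1,1}$ I would reproduce the Fourier-space argument of Proposition~\ref{cadlagsolutionbddsigma}: its Fourier transform is $a_\xi(t)$ given by \eqref{fourier}, and in order to apply \cite[Chapter~III, \S4, Theorem~1]{skorohod} one needs $u^{1,1}(t,\cdot)\in H_r(\R^d)$, $L^2(\Omega)$-continuity of $t\mapsto u^{1,1}(t,\cdot)$ in $H_r(\R^d)$, and a product-increment bound $\E[\|u^{1,1}(t+h,\cdot)-u^{1,1}(t,\cdot)\|_{H_r}^2\,\|u^{1,1}(t-h,\cdot)-u^{1,1}(t,\cdot)\|_{H_r}^2]\I Ch^{1+\delta}$, all of which follow — via the dictionary $[0,\pi]\leftrightarrow[-2A,2A]^d$, $k\leftrightarrow\xi$, $\sin(ky)\leftrightarrow e^{-i\xi\cdot y}$ of Proposition~\ref{cadlagsolutionbddsigma} — from the uniform estimate $\E[|a_\xi(t)|^2]\I C$ and the attendant fourth-moment and product bounds for the integrals $I_a^b(\xi)$. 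This is the only place where the proof of Proposition~\ref{cadlagsolutionbddsigma} breaks down: all these bounds require $\sup_{[0,T]\times[-2A,2A]^d}\E[|\sigma(u(s,y))|^2]<+\infty$, while for $d\s2$ hypothesis \hyperlink{hyp1}{\textbf{(H)}} only gives moments of order $p<1+\frac2d\I2$, and since $g\notin L^2([0,T]\times\R^d)$ one cannot bootstrap to $p=2$ — the solution $u$ genuinely has no finite second moment.

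To get around this I would, as announced before the statement, pass to an equivalent probability measure $\Q\sim\PR$, which leaves the sample paths of $u$ unchanged but restores the needed moments. Using the integration theory for general random measures recalled in the Appendix, and following the strategy of \cite{Chongheavytailed}, one constructs $\Q$ by means of a Girsanov-type density for the Poisson random measure $J$, arranged so that under $\Q$ one controls simultaneously the second moment of the integrand, $\sup_{[0,T]\times[-2A,2A]^d}\E^\Q[|\sigma(u(s,y))|^2]<+\infty$, and the second-order characteristics of the integrator $L^M$ — now only a general random measure and no longer a Lévy noise — so that Theorem~\ref{fubini}, the estimate \eqref{p-est} and the moment inequalities of \cite{marinelli} remain applicable and the computations of $\E^\Q[|a_\xi(t)|^2]$ and of the increments of $u^{1,1}$ go through under $\Q$. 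Finally one runs the approximation of Theorem~\ref{cadlagsolution-general}: with $\sigma_n=\sigma\mathds 1_{|\cdot|\I n}$, the process $u^{1,1}_n$ defined like $u^{1,1}$ but with $\sigma_n(u)$ in place of $\sigma(u)$ has a bounded predictable integrand, hence a càdlàg version in $H_r(\R^d)$ by the argument just described (which under $\PR$ needs only that the integrand be bounded); by Doob's inequality, the $\Q$-moment control on $\E^\Q[|\sigma(u)|^2]$, and dominated convergence, $\sup_{t\I T}\|u^{1,1}_n(t,\cdot)-u^{1,1}(t,\cdot)\|_{H_r}\to0$ in $L^2(\Q)$, hence $\PR$-almost surely along a subsequence since $\Q\sim\PR$. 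Therefore $u^{1,1}$, and with it $u=u_1$, has a càdlàg version in $H_{r,\text{loc}}(\R^d)$ for every $r<-\frac d2$. The delicate step throughout is the construction of $\Q$ and the verification of the two moment controls.
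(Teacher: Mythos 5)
Your overall strategy is the paper's: reduce to the truncated solution with $N=1$, decompose via \eqref{def-u}, reuse Proposition~\ref{cadlagsolutionbddsigma} for the terms away from the diagonal, approximate by $\sigma_n=\sigma\mathds 1_{|\cdot|\I n}$, and restore second moments by an equivalent change of measure. But the way you formulate the change-of-measure step is a genuine gap. You ask for a measure $\Q\sim\PR$ under which \emph{both} $\sup_{[0,T]\times[-2A,2A]^d}\E^\Q[|\sigma(u(s,y))|^2]<+\infty$ \emph{and} the second-order characteristics of $L^M$ are controlled, so that you can redo the computations of $\E^\Q[|a_\xi(t)|^2]$ and the fourth-moment increment bounds under $\Q$. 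No such $\Q$ is constructed in the paper, and it is not clear one exists: after the change of measure $L^M$ is no longer a Lévy noise, its compensator acquires a random density $Y(t,x,z)$, and decoupling ``moments of the integrand'' from ``characteristics of the integrator'' in this way is exactly what the paper avoids. What Theorem~\ref{change} actually delivers is the single statement $\sigma(u)\in L^{1,2}(L^M,\Q)$, i.e.\ finiteness of the Daniell mean $\|\sigma(u)\|_{L^M,2,\Q}$, which packages integrand and integrator together. This is then used only in the approximation step: $\E_\Q[\sup_t\|u^{1,1}(t,\cdot)-u^{1,1}_n(t,\cdot)\|^2_{H_r}]\I C\|\sigma_{(n)}\|^2_{L^M,2,\Q}\int(1+|\xi|^2)^r\,\dd\xi\to0$ by the dominated convergence theorem for the Daniell mean (Theorem~\ref{domcon}). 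The Skorohod-type fourth-moment computation is \emph{never} redone under $\Q$; it is only ever performed under $\PR$ for the bounded integrands $\sigma_n(u)$, where Proposition~\ref{cadlagsolutionbddsigma} applies as is. Your proposal should be restructured accordingly: do not try to rerun the quantitative increment estimates under $\Q$, only the uniform $H_r$-convergence of $u^{1,1}_n$ to $u^{1,1}$.

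Two smaller gaps. First, you assert that $u^3$ is jointly continuous; with unbounded $\sigma$ this does not follow from Lemma~\ref{drift-bdd}, and indeed the paper splits $u^3=u^{3,1}+u^{3,2}$, proves joint continuity only for $u^{3,2}$, and treats $u^{3,1}$ by the same $\sigma_n$-approximation in the $H_r$-norm (uniformly in $t$) rather than by continuity in $(t,x)$. Second, you do not address the case $0<p<1$ of hypothesis \textbf{(H)}, where $b_0=0$, the small jumps are summable, $L^M$ must be replaced by the uncompensated integral against $J$, and the Fubini justifications use \eqref{p-est-2} instead of \eqref{p-est}; this case requires a separate (short) paragraph.
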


\begin{proof} We first consider the case $p\s 1$ in assumption \hyperlink{hyp1}{\textbf{(H)}}.
As in Proposition~\ref{cadlagsolutionbddsigma}, we can suppose that $u$ is the solution to \eqref{truncatedmild} with $N=1$, and use the decomposition %By \cite[Theorem 3.8]{Chongheavytailed}, and since $|\sigma(x)|\I C(1+|x|^\gamma)$, for some $\eps>0$ with $(2+\eps)\gamma\I q$,
% \begin{equation}\label{momenttwopluseps}
% \sup_{(t,x)\in [0, T] \times \R^d } \E \lc \left | \sigma \lp u(t,x) \rp \right |^{2+ \eps} \rc \I C\lp 1+\E\lc \left |u(t,x) \right | ^{(2+\eps)\gamma} \rc \rp  \I C\lp 1+\E\lc \left |u(t,x) \right | ^{q} \rc \rp <+\infty \, .
%\end{equation}
%The drift part has already been taken care of in Lemma \ref{driftunbounded}. 
\eqref{def-u} with $A>0$. The terms $V$ and $u^{2,1}$ can be dealt with as in Proposition~\ref{cadlagsolutionbddsigma}. For the remaining terms, we use different arguments.

%Furthermore, looking at the proof of the joint continuity of $u^2$ and $u^3$ in Proposition \ref{cadlagsolutionbddsigma}, we realize that we only need $Z(t,x)=\sigma\lp u(t,x)\rp$ to have uniformly bounded moments of order 2. 

\vspace{0.25\baselineskip}

\noindent
\underline{$u^{1,1}(t,x)$:} %Therefore, it remains to investigate the regularity of the process $u^1$ given by
%\begin{equation}\label{u1} u^1(t,x)= \int_0^t \int_{\R^d} \int_\R g(t-s, x-y)\sigma (u(s,y)) z\mathds 1_{|z|\I 1} \mathds 1_{y \in [-2A, 2A]^d} \,\tilde J(\dd s , \dd y, \dd z)\, .\end{equation}
 Let $\sigma_n(u)= \sigma(u)\mathds 1_{|u|\I n}$ and define $u^{1,1}_n$ as in \eqref{def-u} but with $Z$ replaced by $\sigma_n(u)$.
% $$u^1_n(t,x)=\int_0^t \int_{y\in [-2A,2A]^d} g(t-s, x-y)\mathds \sigma_n(u(s,y))\, L^M(\dd s , \dd y) \, .$$
Then, %for $(t,x)\in [0,T]\times\R^d$,
\begin{equation*}
 u^{1,1}(t,x)-u^{1,1}_n(t,x)= \int_0^t \int_{y\in [-2A,2A]^d} g(t-s, x-y)\lp \sigma(u(s,y))- \sigma_n(u(s,y)) \rp\, L^M(\dd s , \dd y)\, ,
\end{equation*}
and
\begin{align}\label{hsnorm2}
 \| u^{1,1}(t,\cdot)-u^{1,1}_n(t,\cdot)\|_{H_r(\R^d)}^2=\int_{\R^d} (1+|\xi |^2)^r  \left | \m F\lp u^{1,1}(t,\cdot)-u^{1,1}_n(t,\cdot) \rp (\xi )\right |^2 \,\dd \xi\, .
\end{align}
Writing $\sigma_{(n)} (s,y)= \sigma(u(s,y))- \sigma_n(u(s,y))$, we obtain
\begin{equation*}
\m F(u^{1,1}(t,\cdot)-u^{1,1}_n(t,\cdot))(\xi )= \int_0^t \int_{[-2A,2A]^d}  e^{-i\xi  \cdot y}e^{ -(t-s) |\xi|^2}\sigma_{(n)}(s,y)  \,L^M(\dd s , \dd y)
\end{equation*}
as in \eqref{fourier}. With similar calculations as in \eqref{calc-akn}, but using Theorem~\ref{fubini} with $1<p<1+\frac 2 d$, one can show that
%Then, using $e^{-|\xi|^2(t-s)}=1-\int_s^t |\xi |^2 e^{-|\xi |^2(t-r)}\,\dd r$, we get
%\begin{align*}
%\m F\lp u^1(t,\cdot)-u^1_n(t,\cdot) \rp (\xi ) &=\int_0^t \int_{y\in [-2A,2A]^d} e^{-i\xi  \cdot y}\sigma_{(n)}(s,y)\, L^M(\dd s, \dd y) \\
% &\hspace{0.5cm}  - \int_0^t \int_{y\in [-2A,2A]^d} e^{-i\xi  \cdot y} \lp \int_s^t |\xi|^2 e^{-|\xi|^2(t-r)}\dd r\rp \sigma_{(n)} (s,y)\,L^M(\dd s, \dd y)  \\
% &=\int_0^t \int_{y\in [-2A,2A]^d}  e^{i\xi  \cdot y}\sigma_{(n)}(s,y) \,L^M(\dd s, \dd y) \\
% &\hspace{0.5cm}  - \int_0^t \lp \int_0^r \int_{y\in [-2A,2A]^d} e^{-i\xi  \cdot y} |\xi|^2 e^{-|\xi|^2(t-r)} \sigma_{(n)} (s,y)\,L^M(\dd s, \dd y) \rp  \,\dd r \, .
%\end{align*}
%Therefore, 
\begin{equation}\label{Fu11}
\begin{aligned}
\sup_{t\in[0,T]} \left | \m F(u^{1,1}(t,\cdot)-u^{1,1}_n(t,\cdot) ) (\xi ) \right | %\I  \left | \int_0^t \int_{y\in [-2A, 2A]^d}e^{-i\xi  \cdot y}\sigma_{(n)} (s,y) L^M(\dd s, \dd y) \right | \\
%&\hspace{0.5cm} + \int_0^t |\xi|^2 e^{-|\xi|^2(t-r)} \left | \int_0^r \int_{y\in [-2A,2A]^d} e^{-i\xi  \cdot y} \sigma_{(n)} (s,y)L^M(\dd s, \dd y) \right |  \dd r \\
%\I  \left | \int_0^t \int_{y\in [-2A,2A]^d}e^{-i\xi  \cdot y} \sigma_{(n)} (s,y)\, L^M(\dd s, \dd y) \right | \\
%&\hspace{0.5cm} +\sup_{r \in [0,t]} \left | \int_0^r \int_{y\in [-2A,2A]^d}e^{-i\xi  \cdot y} \sigma_{(n)} (s,y)\,L^M(\dd s, \dd y) \right | \\
%&\qquad \qquad \times \lp \int_0^t |\xi|^2 e^{-|\xi|^2(t-r)}\,\dd r \rp \\
&\I C \sup_{t \in [0,T]} \left | \int_0^t \int_{[-2A,2A]^d}e^{-i\xi  \cdot y} \sigma_{(n)} (s,y)\,L^M(\dd s, \dd y) \right |  \, ,
\end{aligned}
\end{equation}
where $C$ does not depend on $\xi$. With notation from the Appendix, the fact that $\sigma(u) \in L^{1,p}(L^M,\bbp)$ implies that there exists a probability measure $\Q$ that is equivalent to $\bbp$ such that the process $\sigma(u)$ belongs to $L^{1,2}(L^M,\Q)$, see Theorem~\ref{change}. 
Consequently, using the notation in \eqref{Daniell}, we deduce from \eqref{hsnorm2} that
\begin{equation}\label{EQ}
\begin{aligned}
	&\E_\Q\left[ \sup_{t\in[0,T]} \| u^{1,1}(t,\cdot)-u^{1,1}_n(t,\cdot)\|_{H_r(\R^d)}^2 \right] \\&\qquad\I C \int_{\R^d} (1+|\xi|^2)^r	\E_\Q\left[ \sup_{t \in [0,T]} \left | \int_0^t \int_{y\in [-2A,2A]^d}e^{-i\xi  \cdot y} \sigma_{(n)} (s,y)\,L^M(\dd s, \dd y) \right |^2\right] \,\dd \xi \\
	&\qquad\I C  \int_{\R^d} (1+|\xi|^2)^r \|e^{-i\xi\cdot (\cdot)}\sigma_{(n)}\|^2_{L^M,2,\Q} \,\dd \xi
	\I C\|\sigma_{(n)} \|^2_{L^M,2,\Q}\int_{\R^d} (1+|\xi|^2)^r\,\dd \xi\,.
\end{aligned} \end{equation}
The last integral is finite because $r<-\frac d 2$. Moreover, $\sigma_{(n)}(\omega,s,y) \to 0$, pointwise in $(\omega,s,y)$, and is bounded by $\sigma(u(\omega,s,y))$, which belongs to $L^{1,2}(L^M,\Q)$ by assumption. Hence, by Theorem~\ref{domcon}, the left-hand side of \eqref{EQ} converges to $0$ as $n\to+\infty$.
As before, we may extract a subsequence that converges uniformly in $[0,T]$ almost surely with respect to $\Q$, and hence $\bbp$. We now deduce that $t\mapsto u^{1,1}(t,\cdot)$ has a càdlàg modification because the processes $u_n^{1,1}$ have càdlàg modifications by Proposition~\ref{cadlagsolutionbddsigma}.

\vspace{0.25\baselineskip}

\noindent
\underline{$u^{1,2}(t,x)$:} The proof is identical to the corresponding part in Proposition~\ref{cadlagsolutionbddsigma}, provided we can still apply Theorem~\ref{fubini} in \eqref{Fubini-used}. In order to justify this, observe that
\begin{align*}
	&\E\left[ \int_0^u \int_{\R^d} \int_{|z|\I 1} |\partial_{x_{k_1}\dots x_{k_i}}\partial_t g\lp u-s , c_{\mathbf k}(-\mathbf A,r)- y \rp\sigma(u(s,y))z|^p  \mathds 1_{y \notin [-2A, 2A]^d} \,\dd s\,\dd y\,\nu(\dd z) \right]\\
	&\qquad\I C\int_0^u \int_{\R^d}  |\partial_{x_{k_1}\dots x_{k_i}}\partial_t g( u-s , c_{\mathbf k}(-\mathbf A,r)- y )|^p  E_{\frac{2-d (p-1)}{2(p\vee 1)},\frac{1}{p\vee 1}}\Big( C |y|^{\frac{\eta(p-q)}{p\vee1}}\Big)\mathds 1_{y \notin [-2A, 2A]^d} \,\dd s\,\dd y\\
	&\qquad \I C\int_0^u \int_{\R^d}  |\partial_{x_{k_1}\dots x_{k_i}}\partial_t g( u-s , c_{\mathbf k}(-\mathbf A,r)- y )|^p  \exp\Big(C|y|^{\frac{2\eta(p-q)}{2-d (p-1)}}\Big)P(|y|)\mathds 1_{y \notin [-2A, 2A]^d} \,\dd s\,\dd y
\end{align*}
by \eqref{moment1} and \cite[Theorems~4.3 and 4.4]{Gorenflo14} with some polynomial $P$. Next, for every multi-index $\alpha\in\N^{1+d}$, it is easily verified by induction that $\partial^\alpha g(t,x)$ takes the form $Q(t^{-\frac 1 2},x)\exp(-\frac{|x|^2}{4t})$ for some polynomial $Q$. So if $l$ denotes the degree of $Q$, we have for every $t\in[0,T]$ and $|x|^2\s 2lT$,
\[ |\partial^\alpha g(t,x)| \I C(1+t^{-\frac l 2}+|x|^l)e^{-\frac{|x|^2}{4t}} \I C(1+T^{-\frac l 2}+|x|^l)e^{-\frac{|x|^2}{4T}} =: \tilde Q(x)e^{-\frac{|x|^2}{4T}}\,.   \]

%By a direct calculation,
%we have
%\begin{align*} \left|\partial_{x_{k_1}\dots x_{k_i}}\partial_t g(t,x)\right|&=\left|\frac{\lp\frac12\rp^i x_{k_1}\dots x_{k_i} \lp\frac{|x|^2}{2}-(\frac d 2 +i)t \rp \exp\left(-\frac{|x|^2}{4t}\right)}{(4\pi)^{\frac d 2} t^{\frac d 2 +i+2}}\right|\\
%&\I C |x_{k_1}\dots x_{k_i}| \lp\frac{|x|^2}{2}+\frac 3 2 dT \rp \frac{\exp\left(-\frac{|x|^2}{4t}\right)}{t^{\frac d 2 +i+2}} \\ &\I C |x_{k_1}\dots x_{k_i}| \lp\frac{|x|^2}{2}+\frac 3 2 dT \rp \frac{\exp\left(-\frac{|x|^2}{4T}\right)}{T^{\frac d 2 +i+2}}\, ,  \end{align*}
%where the last step holds if $|x|^2 \s (3d+4)T$. The last expression is of the form $Q(x)\exp(-\frac{|x|^2}{4T})$ for some polynomial $Q$.
 Hence, as $|c_{\mathbf k}(-\mathbf A,r)- y|\s A$ for $y \notin [-2A, 2A]^d$, we obtain for sufficiently large $A$ that the expectation in the penultimate display is bounded by
 \begin{align*}
 	&C\int_{\R^d} |\tilde Q(c_{\mathbf k}(-\mathbf A,r)- y)|^p e^{-\frac{p|c_{\mathbf k}(-\mathbf A,r)- y|^2}{4T}}\exp\Big(C|y|^{\frac{2\eta(p-q)}{2-d (p-1)}}\Big)P(|y|)\mathds 1_{y \notin [-2A, 2A]^d} \,\dd y\\
 	&\qquad \I C\int_{\R^d} |\tilde Q(c_{\mathbf k}(-\mathbf A,r)- y)|^p e^{-\frac{p|c_{\mathbf k}(-\mathbf A,r)- y|^2}{4T}}\exp\Big(C|y|^{\frac{2\eta(p-q)}{2-d (p-1)}}\Big)P(|y|)\,\dd y\\
 	&\qquad =C\int_{\R^d} |\tilde Q(y)|^p e^{-\frac{p|y|^2}{4T}}\exp\Big(C|c_{\mathbf k}(-\mathbf A,r)- y|^{\frac{2\eta(p-q)}{2-d (p-1)}}\Big)P(|c_{\mathbf k}(-\mathbf A,r)- y|)\,\dd y\,.
 \end{align*}
Since $|c_{\mathbf k}(-\mathbf A,r)|\I \sqrt{d}A$, $|x-y|^b\I 2^{b-1} (|x|^b+|y|^b)$ for $b\s 1$, and $P(|x-y|)\I C(1+|x|^m + |y|^m)$ where $m$ is the degree of $P$, one can find another polynomial $\tilde P$ such that the last integral is further bounded by
\[ C\int_{\R^d} |\tilde Q(y)|^p e^{-\frac{p|y|^2}{4T}}\exp\Big(C|y|^{\frac{2\eta(p-q)}{2-d (p-1)}}\Big)\tilde P(y)\,\dd y\, , \]
which is independent of $r$, and finite because it is possible by assumption \hyperlink{hyp1}{\textbf{(H)}} to choose $\eta>\frac d q$ such that $\frac{2\eta(p-q)}{2-d (p-1)}< 2$ is satisfied. Theorem~\ref{fubini} is therefore applicable by \eqref{p-est}.

\vspace{0.25\baselineskip}

\noindent
\underline{$u^{2,2}(t,x)$:} The argument remains the same as in Proposition~\ref{cadlagsolutionbddsigma}, except that we have to replace the final bound in \eqref{u22} by
\[ C\int_0^T \int_{y\notin [-2A, 2A]^d} \int_{|z|>1} |z|^\beta e^{-\beta\frac{|p_A(y)-y|^2}{4T}}\left(E_{\frac{2-d (p-1)}{2(p\vee 1)},\frac{1}{p\vee 1}}\Big( C |y|^{\frac{\eta(p-q)}{p\vee1}}\Big)\right)^{\frac \beta p}\, \dd s \, \dd y \, \nu( \dd z)\, ,  \]
which is finite by an argument similar to the one for $u^{1,2}$.

\vspace{0.25\baselineskip}

\noindent
\underline{$u^3(t,x)$:} Consider the decomposition $u^3(t,x)=u^{3,1}(t,x)+u^{3,2}(t,x)$ where
\begin{equation}\label{u3-split}\begin{aligned} u^{3,1}(t,x)&=  b\int_0^t \int_{y\in [-2A, 2A]^d} g(t-s, x-y) \sigma(u(s,y))\, \dd s\, \dd y\, ,\\
u^{3,2}(t,x)&= b\int_0^t \int_{y\notin [-2A, 2A]^d} g(t-s, x-y) \sigma(u(s,y)) \, \dd s \,\dd y\,. \end{aligned}\end{equation}
If $u^{3,1}_n$ is the process obtained from $u^{3,1}$ by replacing $\sigma(u(s,y))$ by $\sigma_n(u(s,y))$, then, as in \eqref{Fu11}, %one can show that
\begin{align*}
	\sup_{t\in[0,T]} \left | \m F( u^{3}(t,\cdot)-u^{3}_n(t,\cdot) ) (\xi ) \right | 
	&\I C \sup_{t \in [0,T]} \left | \int_0^t \int_{\R^d}e^{-i\xi  \cdot y} \sigma_{(n)} (s,y) \mathds 1_{y\in [-2A, 2A]^d}\,\dd s\,\dd y \right| \\
	&\I C\int_0^T \int_{\R^d} |\sigma_{(n)}(s,y)|\mathds 1_{y\in [-2A, 2A]^d}\,\dd s\,\dd y\,.
\end{align*}
Consequently, we have
\begin{align*} \sup_{t\in[0,T]} \| u^{3,1}(t,\cdot)-u^{3,1}_n(t,\cdot)\|_{H_r(\R^d)}^2&=\sup_{t\in [0,T]}\int_{\R^d} (1+|\xi |^2)^r  \left | \m F( u^{3,1}(t,\cdot)-u^{3,1}_n(t,\cdot) ) (\xi )\right |^2 \,\dd \xi\\ & \I C \int_0^T \int_{\R^d} |\sigma_{(n)}(s,y)|\mathds 1_{y\in [-2A, 2A]^d}\,\dd s\,\dd y \int_{\R^d} (1+|\xi |^2)^r \,\dd \xi\, . \end{align*}
Recalling that $u$ is the solution to \eqref{truncatedmild} with $N=1$, the expectation of the left-hand side tends to $0$ as $n\to+\infty$ by \eqref{moment1} and the dominated convergence theorem. Hence, $t\mapsto u^{3,1}(t,\cdot)$ inherits the càdlàg sample paths of $u^{3,1}_n$, see Lemma~\ref{drift-bdd}.
Concerning $u^{3,2}$, the  continuity of $(t,x)\mapsto u^{3,2}(t,x)$ on $[0,T]\times[-A,A]^d$ is shown in the same way as for $u^{1,2}$. Instead of the stochastic Fubini theorem, one can use the ordinary Fubini theorem because
\begin{align*} &\int_0^t \dd u \int_{-A}^{x_{k_1}} \dd r_{k_1} \dots \int_{-A}^{x_{k_i}} \dd r_{k_i}\\
&\qquad \left(\int_0^u \int_{\R^d} |\partial_{x_{k_1}\dots x_{k_i}}\partial_t g\lp u-s , c_{\mathbf k}(-\mathbf A,r)- y \rp\sigma(u(s,y))|  \mathds 1_{y \notin [-2A, 2A]^d} \,\dd s\,\dd y\right)<+\infty \end{align*}
almost surely. This is verified by showing that the expectation of the integral in brackets is finite and uniformly bounded in $u$ and $r_{k_1}, \dots, r_{k_i}$. This concludes the proof for $p\s 1$.

For $0<p<1$, we have to modify the proof in the following way. Because $L$ has drift $b_0=0$ and summable jumps by the assumption $\int_{|z|\I 1} |z|^p \,\nu(\dd z)<+\infty$, we can write $u$ in the same form as \eqref{def-u} with $L^M(\dd t,\dd x)$ replaced by $\int_{|z|\I 1} z \, J(\dd t,\dd x,\dd z)$ and $u^3 = 0$. An inspection of the proof above shows that the arguments for $V$, $u^{2,1}$, $u^{2,2}$ remain valid, and in principle also for $u^{1,1}$ and $u^{1,2}$ if changing the order of integration in \eqref{fourier-calc} and \eqref{Fubini-used}, respectively, is permitted. The justification is comparable to the situation for $p\s 1$; one only has to use \eqref{p-est-2} instead of \eqref{p-est}: %apart from one novel argument that is similar in both cases, so we only give the details for the calculation in \eqref{fourier-calc}. Recall that $u$ is the solution to \eqref{truncatedmild} driven by the truncated noise $L_1$. Hence, using the moment bound \eqref{moment1}, the fact that $\sup_{x\in\R^d} |\m F(\varphi)(x)|<\infty$, and the inequality $(x+y)^p\I x^p + y^p$ for $0<p<1$ and $x,y>0$, we obtain
\begin{align*}
	%&\E\left[ \lp \int_0^t \int_{\R^d} \int_\R \lp  \int_{\R^d}|\m F(\varphi)(x) | g(t-s, x-y) \,\dd x \rp | \sigma(u(s,y)) z|\mathds 1_{ |z|\I 1} \mathds 1_{y \in [-2A, 2A]^d}  \, J(\dd s , \dd y, \dd z) \rp^p \right]\\
	&\int_0^t\int_{\R^d}\int_{{ |z|\I 1}}  \lp  \int_{\R^d}|\m F(\varphi)(x) | g(t-s, x-y) \,\dd x \rp^p  \E[\sigma(u(s,y))|^p] |z|^p \mathds 1_{y \in [-2A, 2A]^d} \,\dd s\,\dd y\,\nu(\dd z)\\
	&\qquad \I C\int_0^T\int_{\R^d} \lp \int_{\R^d} g(t-s, x-y) \,\dd x \rp^p\mathds 1_{y \in [-2A, 2A]^d} \,\dd s\,\dd y = CT(4A)^d <+\infty\, .
\end{align*}
\end{proof}

\begin{rem}\label{remark}
	The paper \cite{Hausenblas05} studies the existence of càdlàg modifications in certain Banach spaces of solutions to a class of stochastic PDEs driven by Poisson random measures. Example~2.3 in \cite{Hausenblas05} particularizes to the case of the stochastic heat equation with a multiplicative Lévy space--time white noise. However, this example contains an error since the measure $\nu$ in the first display on p.\ 1502 is not a Lévy measure (it is infinite on sets of the form $\{|x|>\delta\}$ for all sufficiently small values of $\delta$, contradicting Remark~3.1 in \cite{Hausenblas05}). After private communication with the author, it seems that this  example could be rewritten for the case of a bounded domain, but cannot be extended to the case where $D=\R^d$ (because the stopping times in \eqref{tauN} with $[0,\pi]$ replaced by $\R^d$ are $0$ almost surely for all $N\in\N$, cf.\ the discussion at the beginning of Section~\ref{equation_whole_space}).
\end{rem}

 %%%%%%%%%%%%%%%%%%%%%%%%%%%%%%%%%%%%%%%%%%%%
 %%%%%%%%%%%%%%%%%%%%%%%%%%%%%%%%%%%%%%%%%%%%

 %%%%%%%%%%%%%%%%%%%%%%%%%%%%%%%%%%%%%%%%%%%%
%%%%%%%%%%%%%%%%%%%%%%%%%%%%%%%%%%%%%%%%%%%%
%%%%%%%%%%%%%%%%%%%%%%%%%%%%%%%%%%%%%%%%%%%%
 
 \subsection{The stochastic heat equation on bounded domains}\label{equation_bdd_domain}
Let $D$ be a $C^\infty$-regular domain of $\R^d$, where $d\s 2$, that is, we assume that $D$ is a bounded open set whose boundary $\partial D$ is a smooth $(d-1)$-dimensional manifold, and whose closure $\bar D$ has the same boundary $\partial \bar D=\partial D$. For the stochastic heat equation \eqref{SHE} on such a domain $D$, %We consider the stochastic heat equation driven by a Lévy white noise in $[0,T]\times D$ with Dirichlet boundary conditions:
%\begin{equation}
% \label{SHED}
% \left\{\begin{array}{ll} \frac{\partial u}{\partial t}(t,x) = \Delta u(t,x)+\sigma(u(t,x)) \dot L(t, x)\, , & (t,x) \in [0,T]\times D \, ,\\ u(t,x)=0\, , & \text{for all } (t,x)\in [0,T]\times \partial D \, , \\
% u(0,x)=u_0(x)\, , & \text{for all } x\in D\, , \end{array}\right.
%\end{equation}
%where $\sigma$ is a Lipschitz function, $u_0\colon \bar D\to\R$ is a continuous function vanishing on $\partial D$, and $L$ is a pure jump Lévy white noise of the form \eqref{noise} (with $J$ being a Poisson random measure on $[0,\infty)\times D\times\R$). %More precisely, we suppose that 
%\begin{equation}\label{noiseD}
%	\begin{aligned}
% L( \dd t, \dd x)&= b\,\dd t\,\dd x+\int_{|z|\I 1} z \tilde J(\dd t, \dd x, \dd z)+\int_{|z|> 1} z  J(\dd t, \dd x, \dd z)\\
% &=:L^B(\dd t,\dd x)+L^M(\dd t, \dd x) +L^P(\dd t , \dd x)\, ,
%	 \end{aligned}
%\end{equation}
%where  $b\in\R$, $J$ is a Poisson random measure on $[0,\infty)\times D \times \R$ with intensity $\dd t \, \dd x \, \nu( \dd z)$, and $\tilde J$ is the compensated version of $J$. The measure $\nu$ is a Lévy measure, that is, $\nu(\{0\}) =0$ and $\int_\R \lp z^2\wedge 1 \rp \nu(\dd z)<+\infty$.  
we assume:
\begin{itemize}
\item[ \hypertarget{hyp2}{\textbf{(H')}}] There exists $0<p<1+\frac 2 d$ such that $\int_{|z|\I 1}|z| ^p \,\nu(\dd z)<+\infty$.
%If $p<1$, we assume that
%\begin{equation}\label{bzeroD}
% b_0:=b-\int_{|z|\I 1} z\,\nu(\dd z)=0\, .
%\end{equation}
\end{itemize}
%The Green's function of the heat operator on the bounded domain $[0,T]\times D$ is denoted by $G_D(t;x,y)$, for all $(t,x,y) \in [0,T]\times D\times D$. By definition, a mild solution to \eqref{SHED} is a predictable random field $u=\lp u(t,x)\colon  (t,x) \in [0,T]\times D \rp$ such that for all $(t,x) \in [0,T]\times D$, 
%\begin{equation}
% \label{mildsolutionD}
% u(t,x)=\int_D G(t;x,y)u_0(y)\,\dd y+\int_0^t \int_D G_D(t-s;x,y) \sigma(u(s,y))\, L(\dd s , \dd y)\quad\text{a.s.}
%\end{equation}
As in the case of an interval (Section~\ref{bddinterval}), the stopping times 
\begin{equation}\label{tauND} \tau_N=\inf\left \{ t\in [0,T]\colon J\lp [0,t]\times D \times [-N,N]^c \rp\neq 0\right \}\end{equation} are almost surely strictly positive and equal to $+\infty$ for large $N$. %We have the following existence and uniqueness result:

\begin{prop}\label{existencebdddomain}
 Let $D$ be a $C^\infty$-regular domain, $\sigma\colon\R\to \R$ be a Lipschitz function and let $L$ be a pure jump  
 Lévy white noise as in \eqref{noise} such that \hyperlink{hyp2}{\textbf{(H')}} is satisfied. Then there exists a predictable mild solution $u$ to \eqref{SHE} such that for all $0<p<1+\frac{2}{d}$,
\begin{equation}\label{moment-D}
  \sup_{(t,x)\in [0,T]\times D}\E\lc |u(t,x)|^{p} \mathds 1_{t\I \tau_N}\rc <+\infty\,.
\end{equation}
Furthermore, up to modifications, the solution is unique among all predictable random fields that satisfy \eqref{moment-D}.
\end{prop}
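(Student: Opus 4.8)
The plan is to transcribe the proof of Proposition~\ref{existence} to dimension $d\s 2$, carrying out all estimates with exponents $p<1+\frac 2d$ (which are $\I 2$ in these dimensions), as in \cite{Chong2016,Chongheavytailed}. The only new analytic input needed to replace \cite[(B.5)]{bally} is the Gaussian upper bound
\[ 0\I G_D(t;x,y)\I g(t,x-y)\,,\qquad (t,x,y)\in(0,T]\times D\times D\,, \]
which holds because $G_D(t;x,\cdot)$ is the transition density at time $t$ of a Brownian motion started at $x$ and killed upon leaving $D$, and hence is dominated by the free heat kernel $g$. Together with $\int_0^T\int_{\R^d}g^{p}(t,z)\,\dd t\,\dd z<+\infty$ for every $p<1+\frac 2d$ (see \cite[(3.20)]{Chong2016}) and $\int_0^T\int_{\R^d}g(t,z)\,\dd t\,\dd z=T$, this shows that $G_D$, on the bounded set $D$, has all the integrability properties that the construction of \cite[Theorem~3.1]{Chongheavytailed} requires of the kernel. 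Since $\int_{|z|\I1}|z|^{p'}\,\nu(\dd z)$ is non-increasing in $p'$, we may and do assume throughout that \hyperlink{hyp2}{\textbf{(H')}} holds with some exponent $p\in[1,1+\frac2d)$.

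First I would localize, exactly as in Proposition~\ref{existence}. Since $D$ is bounded, $[0,T]\times D$ carries almost surely only finitely many atoms of $J$ with $|z|>N$, so the stopping times $\tau_N$ of \eqref{tauND} are strictly positive and equal to $+\infty$ for large $N$. Let $L_N$ be the noise \eqref{noise} with $\int_{|z|>1}z\,J(\dd t,\dd x,\dd z)$ replaced by $\int_{1<|z|\I N}z\,J(\dd t,\dd x,\dd z)$; then $L=L_N$ on $\{t\I \tau_N\}$, and $L_N=\tilde b_N\,\dd t\,\dd x+\int_{|z|\I N}z\,\tilde J(\dd t,\dd x,\dd z)$ (with $\tilde b_N=b+\int_{1<|z|\I N}z\,\nu(\dd z)$) is a drift plus a compensated Poisson integral whose jumps are bounded by $N$, hence with finite $p$-th and second moments. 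It then suffices to construct, for each $N$, a predictable mild solution $u_N$ of \eqref{SHE} driven by $L_N$ with $\sup_{(t,x)\in[0,T]\times D}\E[|u_N(t,x)|^{p}]<+\infty$, to verify $u_N=u_{N+1}$ on $\{t\I \tau_N\}$, and to put $u:=u_N$ on $\{t\I \tau_N\}$; this $u$ is a mild solution of \eqref{SHE}, and $\E[|u(t,x)|^{p}\mathds 1_{t\I \tau_N}]=\E[|u_N(t,x)|^{p}\mathds 1_{t\I \tau_N}]<+\infty$ gives \eqref{moment-D} for this $p$, and, since \hyperlink{hyp2}{\textbf{(H')}} holds a fortiori for larger exponents and Jensen's inequality handles smaller ones, for every $0<p<1+\frac 2d$.

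For the $L_N$-driven equation I would run a Picard iteration $(u_N^{(m)})_{m\s0}$ with $u_N^{(0)}\equiv u_0$, exactly as in the proof of \cite[Theorem~3.1]{Chongheavytailed} but with $G_D$ in place of $g$. The drift part of the iteration map is controlled using $\int_0^t\int_D G_D(t-s;x,y)\,\dd s\,\dd y\I T$, and its compensated small-jump part with the $L^{p}$-moment inequality ($1\I p\I2$) for purely discontinuous martingales (see \cite[Theorem~1]{marinelli}), together with $G_D\I g$ and $\int_{|z|\I N}|z|^{p}\,\nu(\dd z)<+\infty$. Using that $\sigma$ is Lipschitz, this yields
\[ \sup_{x\in D}\E\big[|u_N^{(m+1)}(t,x)-u_N^{(m)}(t,x)|^{p}\big]\I C\int_0^t\big(1+(t-s)^{-\frac d2(p-1)}\big)\,\sup_{y\in D}\E\big[|u_N^{(m)}(s,y)-u_N^{(m-1)}(s,y)|^{p}\big]\,\dd s\,, \]
where the factor $1+(t-s)^{-\frac d2(p-1)}$ bounds $\int_{\R^d}(g+g^{p})(t-s,z)\,\dd z$ and is integrable on $[0,T]$ precisely because $p<1+\frac2d$. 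A generalized Gronwall argument of the kind used to obtain the Mittag--Leffler bound \eqref{moment1} (built on $\int_0^t(t-s)^{-\frac d2(p-1)}\,\dd s<+\infty$) then shows that $(u_N^{(m)})$ converges in $\sup_{(t,x)}\E[|\cdot|^{p}]$ to a predictable mild solution $u_N$ of the $L_N$-driven equation with $\sup_{(t,x)}\E[|u_N(t,x)|^{p}]<+\infty$; and since $L_N=L_{N+1}$ on $\{t\I \tau_N\}$ and the iteration depends on the noise in a deterministic way, one gets $u_N=u_{N+1}$ there.

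Uniqueness follows the usual pattern: if $u,v$ are two predictable mild solutions of \eqref{SHE} satisfying \eqref{moment-D}, then on $\{t\I \tau_N\}$ they solve the same $L_N$-driven equation, and applying the estimate above to $u-v$ localized at $\tau_N$ gives
\[ \sup_{y\in D}\E\big[|u(t,y)-v(t,y)|^{p}\mathds 1_{t\I \tau_N}\big]\I C\int_0^t\big(1+(t-s)^{-\frac d2(p-1)}\big)\sup_{z\in D}\E\big[|u(s,z)-v(s,z)|^{p}\mathds 1_{s\I \tau_N}\big]\,\dd s\,; \]
since the right-hand side is finite by \eqref{moment-D}, the homogeneous version of the generalized Gronwall lemma forces $u=v$ on $\{t\I \tau_N\}$, and letting $N\to+\infty$ concludes. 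The step I expect to be the main obstacle is exactly this $p$-th moment estimate for the stochastic convolution when $d\s 2$: because $g\notin L^2([0,T]\times\R^d)$ in these dimensions, the $L^2$-isometry used in the interval case (Proposition~\ref{cadlagsolutioninterval}) is unavailable, so one is forced to work with $p<1+\frac2d<2$ throughout, using the $L^{p}$-martingale inequality and then pushing a Gronwall argument through the integrable-but-singular kernel $(t-s)^{-\frac d2(p-1)}$. If one insists on keeping the second-moment computations, one can alternatively pass to an equivalent measure $\Q$ as in the proof of Theorem~\ref{cadlagsolution}, at the cost of invoking the random-measure machinery of the Appendix.
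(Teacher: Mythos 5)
Your proposal is correct and follows essentially the same route as the paper: the paper's entire proof consists of the Gaussian domination of $G_D$ (cited from Davies, Corollary~3.2.8, in the form $G_D(t;x,y)\I Ct^{-\frac d2}e^{-\frac{|x-y|^2}{6t}}$) followed by a direct appeal to \cite[Theorem~3.5]{Chong2016}, and what you have written is precisely that Gaussian bound (obtained instead by the killed-Brownian-motion argument) plus an unfolding of the localization, Picard iteration, $L^p$-martingale estimates and generalized Gronwall argument that the cited theorem performs internally. Note that since $d\s 2$ here, $1+\frac2d\I 2$, so unlike in Proposition~\ref{existence} no extension of the moment estimates beyond $p\I 2$ is needed, which is exactly why the paper can cite the theorem without further work and why your restriction to $p\in[1,1+\frac2d)$ suffices.
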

\begin{proof} By \cite[Corollary 3.2.8]{davies}, 
	$G_D(t;x,y) \I C{t^{-\frac d 2}} e^{-\frac{|x-y|^2}{6t}}$,
	so \cite[Theorem 3.5]{Chong2016} applies.
\end{proof}

As in the proof of Proposition~\ref{cadlagsolutioninterval}, the stopping times $\tau_N$ allow us to ignore the big jumps
for the analysis of path properties of the solution. So we only need to consider 
\begin{equation}\label{truncatednoisebdd}
	L_N( \dd t, \dd x)= b_N\,\dd t\,\dd x+\int_{|z|\I N} z \,\tilde J(\dd t, \dd x, \dd z)\, ,
\end{equation}
where $b_N := b-\int_{1<|z| \I N}z \,\nu(\dd z)$, and the corresponding mild solutions to
\begin{equation}\label{uN-D}
	u_N(t,x)=V(t,x)+\int_0^t \int_D G_D(t-s; x,y) \sigma (u_N(s,y))\, L_N(\dd s , \dd y) \,.
	\end{equation}
For simplicity, we take $N=1$ in the following, so that our equation becomes
%\begin{rem}\label{uND}
%Let $u_N(t,x)=u(t,x) \mathds 1_{t\I \tau_N}$. Then $u_N$ is clearly a mild solution to the truncated equation
%\begin{equation*}
 %u_N(t,x)=\int_0^t \int_D G_D(t-s; x,y) \sigma (u_N(s,y)) L_N(\dd s , \dd y) \qquad \text{a.s.}
%\end{equation*}
%Furthermore, $u_N\to u$ as $N\to+\infty$ almost surely, and the convergence is stationary.
%\end{rem}
%The problems we consider in the following are about sample path regularity properties of the mild solution of the stochastic heat equation, and, by stationary convergence of $u_N$ to $u$, these properties are identical to those of $u_N$ defined in Remark \ref{uND} for $N$ sufficiently large. The value of the parameter $N$ has no importance in our study, so we can suppose that $N=1$ and drop the dependency in $N$. Therefore, in the following, we will always consider the solution to the integral equation
\begin{equation}\label{newsolutiondomain}
\begin{aligned}
  u(t,x)&=V(t,x)+ b\int_0^t \int_D G_D(t-s;x,y) \sigma(u(s,y)) \, \dd s \, \dd y\\
  &\quad\qquad+ \int_0^t \int_D G_D(t-s;x,y) \sigma(u(s,y)) L^M(\dd s , \dd y)\, .
\end{aligned}
\end{equation}
%and the solution to \eqref{mildsolutionD} will have the same sample path regularity properties.  
%\begin{rem}
% Since there exists $p<1+\frac 2 d$ such that $\int_{|z|\I 1} |z|^p \nu(\! \dd z)<+\infty$, for any $\beta\in [p,1+\frac 2 d)$, $\int_{|z|\I 1} |z|^\beta \nu(\! \dd z)<+\infty$. Therefore, we can apply Proposition \ref{existencebdddomain} with $p=\beta$ and we obtain that the solution $u$ to \eqref{newsolutiondomain} has uniformly bounded moments of order $\beta$ for any $\beta<1+\frac 2 d$:
 %\begin{equation} \label{momentsbdd}
 %\sup_{(t,x)\in[0,T]\times D} \E\lc |u(t,x)|^\beta \mathds 1_{t\I \tau_N} \rc < +\infty\, , \qquad \text{for any} \ \beta <1+\frac 2 d\, .
%\end{equation}
%\end{rem}

%%%%%%%%%%%%%%%%%%%%%%%%%%%%%%%%%%%%%%%%%%%%
%%%%%%%%%%%%%%%%%%%%%%%%%%%%%%%%%%%%%%%%%%%%

\subsubsection{The fractional Sobolev spaces $H_r(D)$}\label{fracsobolevdef}
The operator $-\Delta$ on $D$ with vanishing Dirichlet boundary conditions admits a complete orthonormal system in $L^2(D)$ of smooth eigenfunctions $(\Phi_j)_{j\s 1}$, with eigenvalues $(\lambda_j)_{j\s 1}$. Then we have the following properties (see for example \cite[Chapter V, p. 343]{spdewalsh}):
\begin{align}\label{summable}
\sum_{j\s 1} (1+\lambda_j)^r <+\infty \, , \quad \text{for any} \ r<-\frac d 2\, , \\
\label{boundedestimate}
 \left \| \Phi_j \right \|_{L^\infty(D)}\I C (1+\lambda_j)^{\frac \alpha 2} \, , \quad \text{for any} \ \alpha>\frac d 2\, .
\end{align}
The Green's function $G_D$ has the representation \eqref{GD-spectral}
and we have the decomposition
\begin{equation}\label{l2decomposition}
f(x)=\sum_{j\s 1}a_j(f) \Phi_j(x)\, , \quad   x\in D\, ,
\end{equation}
for every $f\in L^2(D)$ 
where $a_j(f)=\scal{f}{\Phi_j}_{L^2(D)}$. 
For  $r\s 0$, we now define 
\begin{equation*}
 H_r(D):=\Bigg\{ f\in L^2\lp D \rp\colon  \| f \|^2_{H_r} := \sum_{j\s 1}\lp 1+\lambda_j\rp ^r a_j(f)^2 <+\infty \Bigg\} \, ,
\end{equation*}
which becomes a Hilbert space with the inner product 
$\scal f h _{H_r}:= \sum_{j\s 1} \lp 1+\lambda_j\rp ^r a_j(f) a_j(h)$.
We denote by $H_{-r}\lp D \rp$ the topological dual space of $H_{r}\lp D \rp$, which turns out to be isomorphic to the space of sequences $b=(b_n)_{n\s 1}$ such that
\begin{equation*}
\| b \|^2_{H_{-r}}:= \sum_{j\s 1}\lp 1+\lambda_j \rp ^{-r} b_j^2 <+\infty\, .
\end{equation*}
In fact, with $b_j= \tilde f(\Phi_j)$ for $\tilde f\in H_{-r}\lp D \rp$, we have $\|\tilde f\|_{H_{-r}} = \| b \|_{H_{-r}}$ and the pairing between $H_{-r}\lp D \rp$ and $H_{r}\lp D \rp$ is given by
$$\scal{b}{h}=\sum_{j\s 1} b_j a_j(h)\I \|b\|_{H_{-r}} \, \|h\|_{H_r}\, .$$
We need the following technical lemma, for which we could not find a reference in the literature.
\begin{lem}\label{equivhsspace}
	For $r\I 0$, the restriction of  $H_r(\R^d)$ to $D$ is continuously embedded in $H_r(D)$.
\end{lem}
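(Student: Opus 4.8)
Since $r\I 0$, the space $H_{-r}(D)$ has non-negative order and the finite linear combinations $h=\sum_{j\s1} d_j\Phi_j$ are dense in it, so by duality it suffices to prove that
\[
\left|\scal{f|_D}{h}_{L^2(D)}\right|\I C\,\|f\|_{H_r(\R^d)}\,\|h\|_{H_{-r}(D)}
\]
for every such $h$ and every $f$ in the dense subspace $\Scd\subset H_r(\R^d)$; once this is done, the linear map $f\mapsto\bigl(\scal{f|_D}{\Phi_j}_{L^2(D)}\bigr)_{j\s1}$ extends to a bounded operator $H_r(\R^d)\to H_r(D)$, which is the asserted embedding. Writing $\bar h$ for the extension of $h$ by $0$ to $\R^d$ (a bounded, compactly supported function, so the pairing is classical), the left-hand side equals $\left|\scal{f}{\bar h}_{L^2(\R^d)}\right|$.

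If $0\I -r<\tfrac32$ and $-r\neq\tfrac12$, one is done at once: in this range $H_{-r}(D)$ coincides, with equivalent norm, with the closure of $C_c^\infty(D)$ in $H^{-r}(\R^d)$, and extension by $0$ maps this space boundedly into $H_{-r}(\R^d)$, so $\|\bar h\|_{H_{-r}(\R^d)}\I C\|h\|_{H_{-r}(D)}$ and Cauchy--Schwarz in $H_r(\R^d)\times H_{-r}(\R^d)$ finishes the argument. For larger $-r$ this is false, since $\bar h$ has a jump of its normal derivative across $\partial D$ and hence lies only in $H^{3/2-\eps}(\R^d)$. The remedy is to peel off integer powers of $1-\Delta$. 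Fix $N\in\N$ with $2N\s -r$ and set $G:=(1-\Delta)^{-N}f$, so that $f=(1-\Delta)^N G$, $\|G\|_{H_{r+2N}(\R^d)}=\|f\|_{H_r(\R^d)}$ and $r+2N\s0$. Differential operators commute with restriction to open sets, so $f|_D=(1-\Delta)^N(G|_D)$, and by the classical restriction theorem $G|_D$ belongs to the ordinary Sobolev space $H^{r+2N}(D)$ with $\|G|_D\|_{H^{r+2N}(D)}\I C\|f\|_{H_r(\R^d)}$; likewise $\Delta^l(G|_D)\in H^{r+2N-2l}(D)$ with the same bound for $0\I l\I N$. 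Integrating by parts $N$ times (Green's formula for the elliptic operator $(1-\Delta)^N$) and using that $h$, being a finite combination of eigenfunctions, satisfies $\Delta^k h|_{\partial D}=0$ for all $k\s0$, one obtains
\[
\scal{f|_D}{h}_{L^2(D)}=\scal{G|_D}{(1-\Delta)^N h}_{L^2(D)}-\sum_{k=0}^{N-1}\int_{\partial D}\bigl(\Delta^k(G|_D)\bigr)\,\partial_n\bigl((-\Delta_D)^{N-1-k}h\bigr)\,\dd\sigma\, ,
\]
where $(-\Delta_D)^{N-1-k}h\in\mathrm{span}\{\Phi_j\}$.

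It remains to bound the two contributions. For the interior term one redistributes the derivatives, $\scal{G|_D}{(1-\Delta)^N h}=\scal{(1-\Delta)^M(G|_D)}{(1-\Delta)^{N-M}h}+(\text{boundary terms})$, choosing $M$ so that $2M$ is as close as possible to $N+r$ while staying $\I r+2N$: then $\|(1-\Delta)^M(G|_D)\|_{L^2(D)}\I C\|G|_D\|_{H^{r+2N}(D)}$ and $\|(1-\Delta)^{N-M}h\|_{L^2(D)}=\|h\|_{H_{2(N-M)}(D)}$, and the residual gap between $2(N-M)$ and $-r$ (at most two orders, vanishing when $r$ is an even integer) is absorbed by one further integration by parts, again producing only boundary integrals. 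All boundary integrals are then estimated by the trace theorem on $\partial D$: the traces of $\Delta^k(G|_D)$ are controlled by $\|G|_D\|_{H^{r+2N}(D)}$, the traces of $\partial_n\bigl((-\Delta_D)^{m}h\bigr)$ by $\|h\|_{H_{-r}(D)}$ via the elliptic a priori estimate $\|v\|_{H^{2j}(D)}\I C\|(1-\Delta_D)^j v\|_{L^2(D)}$ applied with $v=(-\Delta_D)^{m}h$, and the Sobolev exponents of the two factors of each boundary integral are dual. The main obstacle is precisely this last bookkeeping: matching the fractional Sobolev exponents on $\partial D$, keeping the boundary traces meaningful (an issue when $r+2N-2l$ drops below $\tfrac12$, i.e.\ for very negative $r$), and dealing with the exceptional half-integer values of the relevant indices where the plain extension/restriction and elliptic statements degenerate. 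These difficulties are handled by choosing $N$ large enough that all intermediate orders stay away from the critical values, and, for the remaining half-integer levels, by interpolating between two admissible integer choices of $N$.
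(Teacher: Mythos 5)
Your strategy is genuinely different from the paper's: the paper identifies the spectrally defined space $H_r(D)$, $r\s 0$, with the interpolation space $H^r_B(D)$ of Lions--Magenes and Grisvard, embeds it into $H^r(D)$, invokes the extension theorem $H^r(D)\hookrightarrow H_r(\R^d)|_D$, and then dualizes; you instead attempt a direct duality estimate $|\scal{f|_D}{h}|\I C\|f\|_{H_r(\R^d)}\|h\|_{H_{-r}(D)}$ by writing $f=(1-\Delta)^N G$ and integrating by parts. The overall skeleton (dualize, test against finite combinations of eigenfunctions, observe that extension by zero fails beyond order $\frac32$ because of the jump of the normal derivative) is sound, and your treatment of the range $0\I -r<\frac32$, $-r\neq\frac12$, is essentially correct.

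However, there is a genuine gap in the estimation of the boundary terms, and the remedy you propose does not address it. In the boundary integral $\int_{\partial D}\bigl(\Delta^k(G|_D)\bigr)\,\partial_n\bigl((-\Delta_D)^{N-1-k}h\bigr)\,\dd\sigma$, the factor $\Delta^k(G|_D)$ lies in $H^{r+2N-2k}(D)$, whose trace (when it exists) has order $a:=r+2N-2k-\frac12$ on $\partial D$, while $(-\Delta_D)^{N-1-k}h$ is controlled only in $H^{-r-2(N-1-k)}(D)$, whose normal-derivative trace (when it exists) has order exactly $-a$. The classical trace theorem requires $r+2N-2k>\frac12$, i.e.\ $a>0$, for the first factor, and $-r-2(N-1-k)>\frac32$, i.e.\ $a<0$, for the second; these conditions are mutually exclusive for every $k$ and every $N$, because the two boundary orders always sum to zero. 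Hence in each boundary term exactly one factor has a trace of nonpositive order that the classical trace theorem does not furnish, and enlarging $N$ or redistributing powers of $(1-\Delta)$ merely changes which factor is deficient --- it never removes the deficiency. (The obstruction is therefore structural, not the half-integer exceptional values you single out.) To complete the argument one would need the non-classical trace theorems obtained by transposition (traces of $u$ and $\partial_n u$ of negative order for $u$ with $\Delta u$ controlled), i.e.\ essentially the same Lions--Magenes machinery that the paper's proof imports via \cite{lions} and \cite{grisvard}; as written, the assertion that ``all boundary integrals are then estimated by the trace theorem on $\partial D$'' is false.
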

\begin{proof}
	For $m\in \N$, let
	$H^m(D)=\{ u \colon \ u^{(\alpha)} \in L^2(D) \ \text{for all} \ |\alpha |\I m \}$,
	with an upper index $m$, be the ``usual'' Sobolev spaces as in \cite[p.~3]{lions}.  For real $r\s 0$, let $m$ be the smallest even integer with $m\s r$. Following \cite[Chapitre~1, (9.1)]{lions}, we define, with a superscript index, 
		$$H^r(D):=\left [ H^m(D) , L^2(D)\right ]_{1-\frac r m}\, ,$$
	where the right-hand side is the notation of \cite[Chapitre~1, Définition~2.1]{lions} for interpolation spaces.
	%Our definition of fractional Sobolev spaces is based on the spectral powers of the Dirichlet Laplacian. More precisely, for any $r\s 0$, $H_r(D)$ is the domain of $(-\Delta)^{\frac r 2}$. 
	Furthermore, define $H_0^r(D)$ for $r\s 0$ as the closure in $H^r(D)$ of the set of smooth functions with compact support in $D$, see \cite[Chapitre~1, (11.1)]{lions}. Similarly, as in \cite[Definition 8.1]{grisvard}, let $H^r_B(D)$ for $r>\frac 1 2$ be the closed subspace of $H^r(D)$ such that its elements are equal to zero on $\partial D$. %Finally, we point out that $H_r(\R^d)$ defined in Definition \ref{hsspace} coincides with the definition of $H^r(\R^d)$ in \cite[(7.1) p. 35]{lions}. 
	
	By the definition of interpolation spaces, there exists for each $\theta \in [0,1]$ some self-adjoint positive operator $\Lambda$ in $L^2(D)$ with domain $H^m_B(D)$ such that %(see also \cite[Remarque 2.3 p.13]{lions})
	$$\left [H^m_B(D), L^2(D) \right ]_\theta=\text{dom}\lp \Lambda^{1-\theta} \rp\, .$$
	The notion of domain is as in \cite[Chapitre~1, p.\ 12]{lions}, and the power in this case is to be understood as the spectral power of the operator $\Lambda$. By \cite[Chapitre~1, Remarque 2.3]{lions}, $\text{dom} (\Lambda^{1-\theta})$ coincides with $\text{dom}( \tilde \Lambda^{1-\theta})$ for any other self-adjoint positive operator $\tilde\Lambda$ in $L^2(D)$ with domain $H^m_B(D)$. In particular, we can choose $\tilde\Lambda=(-\Delta)^{\frac m 2}$, where $\Delta$ is the Dirichlet Laplacian, and the power $\frac m 2$, an integer because $m$ is even, is to be understood as the composition of partial differential operators. Then, from \cite[Théorème~8.1]{grisvard}, we deduce that
	$$\left [H^m_B(D), L^2(D) \right ]_\theta=H_B^{m(1-\theta)}(D)\, ,$$
	and with the choice $\theta=1-\frac r m$ further that
	\begin{equation}\label{domLa}\text{dom}\lp \tilde\Lambda^{\frac r m} \rp=H_B^{r}(D)\, .\end{equation}
	Let $f\in L^2(D)$ be as in \eqref{l2decomposition}. Then, see e.g.\ \cite[(2.12)]{nochetto}, we have that
$\tilde\Lambda^{\frac r m}f=\sum_{j\s 1} \mu_j^{\frac r m} a_j(f) \Phi_j$,
	where $\mu_j=\lambda_j^{\frac m 2}$ is the $j^{th}$ eigenvalue of $\tilde\Lambda$. The previous sum converges in $L^2(D)$ if and only if 
	$\sum_{j\s 1}\lambda_j^r \left | a_j(f) \right |^2<+\infty$,
	so together with \eqref{domLa}, we obtain
	$$H_r(D)=\text{dom}\lp \tilde\Lambda^{\frac r m} \rp=H^r_B(D)\, .$$
	Therefore, by \cite[Théorème 8.1]{grisvard} and the discussion that follows, we have $H_r(D)\hookrightarrow H^r(D)$. 
	
	If $r\I \frac 1 2$, we have by \cite[(2.13)]{nochetto} 
	\begin{equation*}
		H_r(D)=\left \{\begin{array}{ll}H^r(D) & \text{if} \ r< \frac 1 2\, ,  \\ H_{00}^{\frac 1 2}(D) & \text{if} \ r=\frac 1 2\, , \end{array}\right.
	\end{equation*}
	where $H_{00}^{\frac 1 2}(D)$ is the Lions--Magenes space satisfying
	$H_{00}^{\frac 1 2}(D)\hookrightarrow H_{0}^{\frac 1 2}(D)$
	by \cite[Chapitre 1, Théorème 11.7]{lions}. In addition, for any $r\s 0$, $H_{0}^{r}(D)\hookrightarrow H^{r}(D)$ and thus $H_r(D)\hookrightarrow H^r(D)$. Next, by \cite[Chapitre 1, Théorèmes 9.1, 9.2 and (7.1)]{lions}, there exists a constant $C$ such that any function $u\in H^r(D)$ is the restriction of a function $\tilde u \in H_r(\R^d)$ to $D$ with $\| \tilde u \|_{H_r(\R^d)}\I C \| u \|_{H^r(D)}$. Therefore, $H_r(D)\hookrightarrow H^r(D) \hookrightarrow H_r(\R^d)|_D$ for any $r\s 0$, and by duality, we have $H_r(\R^d)|_D\hookrightarrow H^r(D)\hookrightarrow H_r(D)$ for $r\I 0$.
%	In conclusion, for any $r\I 0$, if $t\mapsto u(t,\cdot)$ is càdlàg in $\hs$, then for any $\theta\in \m D(\R^d)$, $t\mapsto \theta(\cdot)u(t, \cdot)$ is càdlàg in $H_r(\R^d)$, hence  $t\mapsto u(t, \cdot)|_D$ is càdlàg in $H_r(D)$.
\end{proof}
 
 %%%%%%%%%%%%%%%%%%%%%%%%%%%%%%%%%%%%%%%%%%%%
%%%%%%%%%%%%%%%%%%%%%%%%%%%%%%%%%%%%%%%%%%%%

\subsubsection{Existence of a càdlàg solution in $H_{r}(D)$ with $r<-\frac d 2$}

\begin{theo}\label{cadlag_bdd_domain}
 %Let $L$ be a Lévy white noise such that \hyperlink{hyp2}{\textbf{(H')}} is satisfied for some $p>0$. Further assume that $\sigma\colon\R \to \R$ is a Lipschitz continuous function satisfying $|\sigma(x) |\I C(1+|x|^\gamma)$ for some $\gamma <\frac 1 2 + \frac 1 d$. Then 
 The mild solution $u$ to \eqref{SHE} constructed in Proposition \ref{existencebdddomain} has a càdlàg modification  in $H_r(D)$ for any $r<-\frac d 2$.
\end{theo}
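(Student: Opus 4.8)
The plan is to combine the reductions used in Sections~\ref{bddinterval} and \ref{equation_whole_space} with the classical decomposition of the Green's function in the interior of $D$, namely $G_D(t;x,y)=g(t,x-y)+H(t;x,y)$, where $H$ is smooth on $[0,T]\times\bar D\times\{y\in D\colon \mathrm{dist}(y,\partial D)\s\eps\}$ for every $\eps>0$ (with all its $t$- and $x$-derivatives bounded there), and $|H(t;x,y)|\I Ct^{-d/2}e^{-|x-y|^2/(6t)}$ holds globally, since $|H|\I G_D+g$ and $G_D(t;x,y)\I Ct^{-d/2}e^{-|x-y|^2/(6t)}$ by \cite[Corollary~3.2.8]{davies}. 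The idea is to treat the Gaussian part $g$ by the $\R^d$-results of Section~\ref{equation_whole_space} and to show that the contribution of the part $H$ localized near $\partial D$ is negligible, using the machinery of the Appendix.

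\textbf{Reductions.} As in Proposition~\ref{cadlagsolutioninterval}, the stopping times $\tau_N$ of \eqref{tauND} let us replace $u$ by the solution of \eqref{newsolutiondomain} (all jumps $\I 1$), whose sample paths coincide with those of $u$ for $N$ large. The homogeneous part $V$ is jointly continuous on $[0,T]\times\bar D$, and the drift term is handled exactly as the term $u^3$ in the proof of Theorem~\ref{cadlagsolution} (using the above bound on $G_D$ and the decomposition); both are continuous in $L^2(D)\hookrightarrow H_r(D)$ for $r\I 0$. It remains to show that $u^M(t,x)=\int_0^t\int_D G_D(t-s;x,y)\sigma(u(s,y))\,L^M(\dd s,\dd y)$ has a càdlàg modification in $H_r(D)$. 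Since $D$ is bounded and $\sigma$ has at most linear growth, \eqref{moment-D} yields $\sigma(u)\in L^{1,p}(L^M,\bbp)$; by Theorem~\ref{change} there is a $\Q\sim\bbp$ with $\sigma(u)\in L^{1,2}(L^M,\Q)$, and since changing to $\Q$ does not affect sample paths, one works under $\Q$ and uses second moments freely (if $\sigma$ is bounded one may take $\Q=\bbp$).

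\textbf{The Gaussian part.} Write $u^M=w+v$, where $w(t,x)=\int_0^t\int_D g(t-s,x-y)\sigma(u(s,y))\,L^M(\dd s,\dd y)$ and $v$ is the analogous integral with $g$ replaced by $H$. Extending the integrand of $w$ by $0$ outside $D$, the process $\bar w(t,x):=\int_0^t\int_{\R^d} g(t-s,x-y)\sigma(u(s,y))\mathds 1_{y\in D}\,L^M(\dd s,\dd y)$, $x\in\R^d$, is of the same form as the term $u^{1,1}$ in \eqref{def-u} (its integrand is bounded in $L^{1,2}(L^M,\Q)$ and has bounded support); hence the Fourier-space argument in the proofs of Proposition~\ref{cadlagsolutionbddsigma} and Theorem~\ref{cadlagsolution} (with the same $\Q$) applies verbatim and shows that $t\mapsto\bar w(t,\cdot)$ has a càdlàg modification in $H_r(\R^d)$ for every $r<-\frac d2$. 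Choosing $\theta\in C^\infty_c(\R^d)$ with $\theta\equiv1$ on a neighbourhood of $\bar D$, the process $t\mapsto\theta\,\bar w(t,\cdot)$ is càdlàg in $H_r(\R^d)$, so by Lemma~\ref{equivhsspace} its restriction $t\mapsto w(t,\cdot)$ is càdlàg in $H_r(D)$.

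\textbf{The boundary remainder and conclusion.} Put $C_\eps=\{y\in D\colon\mathrm{dist}(y,\partial D)<\eps\}$ and split $v=v_\eps^{\mathrm{int}}+v_\eps^{\mathrm{bdd}}$ according to whether $y\notin C_\eps$ or $y\in C_\eps$. On the support of $v_\eps^{\mathrm{int}}$ the kernel $H$ and its relevant derivatives are bounded and continuous, uniformly in $x\in\bar D$, so arguing as for $u^{1,2}$ in the proof of Proposition~\ref{cadlagsolutionbddsigma} (with the stochastic Fubini theorem, Theorem~\ref{fubini}, applicable by \eqref{p-est} and \eqref{moment-D}) one obtains a jointly continuous modification of $v_\eps^{\mathrm{int}}$ on $[0,T]\times\bar D$, hence $t\mapsto v_\eps^{\mathrm{int}}(t,\cdot)$ is continuous in $L^2(D)\hookrightarrow H_r(D)$. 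For $v_\eps^{\mathrm{bdd}}$, use $H=G_D-g$ to write $v_\eps^{\mathrm{bdd}}=u_\eps^{\mathrm{bdd}}-w_\eps^{\mathrm{bdd}}$, the stochastic convolutions of $G_D$, resp.\ $g$, against $\sigma(u)\mathds 1_{C_\eps}$. The estimate \eqref{EQ}, applied with $\sigma(u)\mathds 1_{C_\eps}$ in place of $\sigma_{(n)}$, gives $\E_\Q[\sup_t\|w_\eps^{\mathrm{bdd}}(t,\cdot)\|_{H_r(\R^d)}^2]\I C\|\sigma(u)\mathds 1_{C_\eps}\|_{L^M,2,\Q}^2\int_{\R^d}(1+|\xi|^2)^r\,\dd\xi\to0$ as $\eps\to0$ (by Theorem~\ref{domcon}, since $\mathds 1_{C_\eps}\to0$ Lebesgue-a.e.), hence also $\E_\Q[\sup_t\|w_\eps^{\mathrm{bdd}}(t,\cdot)\|_{H_r(D)}^2]\to0$ by Lemma~\ref{equivhsspace}. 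For $u_\eps^{\mathrm{bdd}}$, the $\Phi_j$-coefficient of $u_\eps^{\mathrm{bdd}}(t,\cdot)$ is, by \eqref{GD-spectral}, $a_j^\eps(t)=e^{-\lambda_j t}\int_0^t\int_{C_\eps}\Phi_j(y)e^{\lambda_j s}\sigma(u(s,y))\,L^M(\dd s,\dd y)$; proceeding as in \eqref{calc-akn} (insert $e^{-\lambda_j(t-s)}=1-\int_s^t\lambda_j e^{-\lambda_j(t-r)}\,\dd r$) and using the maximal inequality contained in \eqref{Daniell}, one obtains $\E_\Q[\sup_t a_j^\eps(t)^2]\I C\,\E_\Q[\int_0^T\int_{C_\eps}\Phi_j(y)^2\sigma(u(s,y))^2\,\dd s\,\dd y]$ with $C$ independent of $j$, and therefore, summing against $(1+\lambda_j)^r$,
\[ \E_\Q\Big[\sup_t\|u_\eps^{\mathrm{bdd}}(t,\cdot)\|_{H_r(D)}^2\Big]\I C\,\E_\Q\Big[\int_0^T\int_{C_\eps}\sigma(u(s,y))^2\Big(\sum_{j\s1}(1+\lambda_j)^r\Phi_j(y)^2\Big)\,\dd s\,\dd y\Big]. \]
Here $\sum_{j\s1}(1+\lambda_j)^r\Phi_j(y)^2=\frac1{\Gamma(-r)}\int_0^\infty t^{-r-1}e^{-t}G_D(t;y,y)\,\dd t\I C_r<\infty$ uniformly in $y$ for $r<-\frac d2$, by the bound $G_D(t;y,y)\I Ct^{-d/2}$; hence the right-hand side is at most $C_r\,\E_\Q[\int_0^T\int_{C_\eps}\sigma(u)^2\,\dd s\,\dd y]\to0$ as $\eps\to0$. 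Consequently $\E_\Q[\sup_t\|v_\eps^{\mathrm{bdd}}(t,\cdot)\|_{H_r(D)}^2]\to0$. Choosing $\eps_k\downarrow0$ along which $\sup_t\|v_{\eps_k}^{\mathrm{bdd}}(t,\cdot)\|_{H_r(D)}\to0$ almost surely, the continuous modifications of $v_{\eps_k}^{\mathrm{int}}=v-v_{\eps_k}^{\mathrm{bdd}}$ converge uniformly in $t$, almost surely, to a process that is continuous in $H_r(D)$ and equals $v(t,\cdot)$ for each $t$ almost surely; hence $v$, and therefore $u^M=w+v$ and $u$, has a càdlàg modification in $H_r(D)$ for every $r<-\frac d2$. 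For $0<p<1$ in \hyperlink{hyp2}{\textbf{(H')}} one uses \eqref{p-est-2} instead of \eqref{p-est}, exactly as in the last part of the proof of Theorem~\ref{cadlagsolution}. The main obstacle is the term $u_\eps^{\mathrm{bdd}}$: for $d\s2$ the eigenfunctions are no longer uniformly bounded (only \eqref{boundedestimate} holds), so the moment computation of Proposition~\ref{cadlagsolutioninterval} cannot be repeated; the decomposition $G_D=g+H$ is what isolates the Gaussian singularity (treated on $\R^d$), after which the boundary layer is controlled by playing the $L^2$-normalization of the $\Phi_j$ — encoded in the uniform pointwise bound on the spectral function above — against the summability \eqref{summable}, the margin being exactly the condition $r<-\frac d2$.
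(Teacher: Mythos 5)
Your proof follows essentially the same route as the paper's: the decomposition $G_D=g+H$ from \cite{eidelman}, the $\R^d$ results combined with Lemma~\ref{equivhsspace} for the Gaussian part, joint continuity for the smooth interior part of $H$, and a boundary-layer term driven to $0$ in $H_r(D)$ uniformly in $t$ via the change of measure of Theorem~\ref{change}. The organizational differences are minor: you keep the full Gaussian convolution over $D$ and subtract the near-boundary $g$-contribution afterwards, whereas the paper cuts both kernels at $B_\eps(\partial D)$ from the start and treats the entire near-boundary $G_D$-contribution as a single term $u^3_\eps$. Your uniform pointwise bound $\sum_{j\s1}(1+\lambda_j)^r\Phi_j(y)^2\I C_r$ for $r<-\frac d2$, obtained from the subordination identity and $G_D(t;y,y)\I Ct^{-d/2}$, is correct and is a genuine strengthening of the paper's weaker observation that $\Phi^2=\sum_{j\s1}(1+\lambda_j)^r\Phi_j^2$ is merely integrable; it streamlines the Cauchy--Schwarz step for the drift contribution.

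One step is stated too loosely. After passing to $\Q$, the noise $L$ (and in particular $L^M$) is no longer a martingale measure: its Doob--Meyer decomposition under $\Q$ acquires a drift $b^\Q$ and a predictable density $Y$ in the compensator of $J$. Consequently the inequality $\E_\Q\lc\sup_t a_j^\eps(t)^2\rc\I C\,\E_\Q\lc\int_0^T\int_{C_\eps}\Phi_j(y)^2\sigma(u(s,y))^2\,\dd s\,\dd y\rc$ is not what Lemma~\ref{Daniell-explicit} yields under $\Q$; the correct bound is $\|\Phi_j\sigma(u)\mathds 1_{C_\eps}\|^2_{L^{B,\Q},2,\Q}+\|\Phi_j\sigma(u)\mathds 1_{C_\eps}\|^2_{L^{M,\Q},2,\Q}$, i.e.\ a squared drift integral against $|b^\Q|$ plus a quadratic-variation integral weighted by $Y^2$, and the finiteness of $\E_\Q\lc\int_0^T\int_D\sigma(u)^2\,\dd s\,\dd y\rc$ that your display implicitly requires is not among the conclusions of Theorem~\ref{change}. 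The argument is repairable exactly as in the paper: sum the two correct expressions against $(1+\lambda_j)^r$ (your uniform spectral bound handles the double-integral Cauchy--Schwarz step for the drift part), and conclude by dominated convergence from the finiteness statements \eqref{fin} and \eqref{fin2}.
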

In contrast to the case $D=[0,\pi]$, the eigenfunctions of $-\Delta$ on a general domain $D$ in $\R^d$ may not be uniformly bounded, see \eqref{boundedestimate}. Thus, the proof of Theorem~\ref{cadlagsolution-general} does not extend to higher dimensions. Instead, we use \cite[Theorem~1]{eidelman} to write 
\begin{equation}
	\label{decompositiongreen}
	G_D(t;x,y)= g(t,x-y)+ H(t; x,y)\, ,
\end{equation}
where $g$ is the heat kernel on $\R^d$ and $H$ is a function such that for any $\varepsilon >0$,  $(t,x,y) \mapsto H(t;x,y)$ is smooth on $[0,T]\times D \times B_\varepsilon^c(\partial D)$, where $B_\varepsilon(\partial D)$ is the $\eps$-neighborhood of $\partial D$.
Away from the boundary $\partial D$, $g$ can be dealt with as in Theorem~\ref{cadlagsolution}, and $H$ is smooth and therefore easily handled. In order to control the behavior close to the boundary, the change of measure technique (see the Appendix and also the proof of Theorem~\ref{cadlagsolution}) is again fruitful.
\begin{proof}[Proof of Theorem~\ref{cadlag_bdd_domain}]
We may assume that $u$ satisfies \eqref{newsolutiondomain}. 
For $\eps>0$, split $u(t,x)$ into the sum of three terms:
\begin{align*}
	u_\eps^1(t,x)&:=\int_0^t \int_D g(t-s,x-y) \sigma(u(s,y)) \mathds 1_{y \in B_\varepsilon^c(\partial D)} \,L(\dd s,  \dd y)\,, \\
	u_\eps^2(t,x)&:=\int_0^t \int_D H(t-s;x,y) \sigma(u(s,y)) \mathds 1_{y \in B_\varepsilon^c(\partial D)} \,L(\dd s,  \dd y)\,,\\
	u^3_\eps(t,x)&:= \int_0^t \int_D G_D(t-s;x,y) \sigma(u(s,y)) \mathds 1_{y \in B_\varepsilon(\partial D)} \,L(\dd s,  \dd y)\, .
\end{align*}

%\vspace{0.5cm}

%\noindent
 %\underline{$u^1_\eps(t,x)$:} %Let $\eps >0$, and
% $$u_\eps(t,x)=\int_0^t \int_D G_D(t-s;x,y) \sigma\lp u(s,y) \rp \mathds 1_{y \in B_\varepsilon^c(\partial D)} L(\dd s,  \dd y) \, .$$
 %Then, by \eqref{decompositiongreen}, we can write
%\begin{align*}
% u_\eps(t,x)&=\int_0^t \int_D g(t-s;x-y) \sigma\lp u(s,y) \rp \mathds 1_{y \in B_\varepsilon^c(\partial D)} L(\dd s,  \dd y) \\
% &\hspace{1cm} +\int_0^t \int_D H(t-s;x,y) \sigma\lp u(s,y) \rp \mathds 1_{y \in B_\varepsilon^c(\partial D)} L(\dd s,  \dd y)\\
% &=:u_\eps^1(t,x)+u_\eps^2(t,x)\, .
%\end{align*}
By \eqref{moment-D}, the same proof as in Theorem~\ref{cadlagsolution} for $u^{1,1}$ and $u^3$ shows that $t\mapsto u_\eps^1(t,\cdot)$ has a càdlàg version in $\hs$ for $r<-\frac d 2$, where the spatial variable takes values in the whole space $\R^d$ rather than just $D$. Thus, as a process with $x\in D$, it has a càdlàg version in $H_r(D)$ for $r<-\frac d 2$ by Lemma~\ref{equivhsspace}. Regarding $u^2_\eps$, since $(t,x,y) \mapsto H(t;x,y)$ is smooth on $[0,T]\times D \times B_\varepsilon^c(\partial D)$, we can mimic the part of the proof of Theorem~\ref{cadlagsolution} concerning $u^{1,2}$ in order to get that $(t,x) \mapsto u_\eps^2(t,x)$ is jointly continuous, and in fact, uniformly continuous since $D$ is bounded. In particular, $t\mapsto u_\eps^2(t,\cdot)$ is continuous in $H_r(D)$ for any $r\I 0$. %Therefore, $t\mapsto u_\eps(t,\cdot)$ has a càdlàg version in $H_r(D)$ for any $r<-\frac d 2$.
For the last term $u^3_\eps$, we want to show that it converges to $0$ in $H_r(D)$, uniformly in $t\in[0,T]$. As a first step, we have
%\begin{align*}
%u_\eps^3(t,x):= u(t,x)-u_\eps(t,x)=\int_0^t \int_D G_D(t-s;x,y) \sigma\lp u(s,y) \rp \mathds 1_{y \in B_\varepsilon(\partial D)} L(\dd s,  \dd y)\, .
%\end{align*}
%Then,
\begin{align*}
 \left \| u_\eps^3 (t,\cdot) \right \|_{H_r}^2=\sum_{k\s 1} (1+\lambda_k)^r \lp a_k^\eps(t)\rp ^2\, ,
\end{align*}
where $a_k^\eps(t) := \int_D \Phi_k(x)u_\eps^3(t,x)\, \dd x$. As in \eqref{calc-akn} and \eqref{Fu11}, one can then show that
\begin{align*}
 \left | a_k^\eps(t) \right | %&\I \sup_{t\in [0,T]}\left | \int_0^t \int_D \Phi_k(y) \sigma(u(s,y)) \mathds 1_{y \in B_\varepsilon(\partial D)}L(\dd s,  \dd y) \right | \lp 1+\int_0^t \lambda_k e^{-\lambda_k(t-r)}\dd r \rp\\
 &\I C\sup_{t\in [0,T]}\left | \int_0^t \int_D \Phi_k(y) \sigma(u(s,y)) \mathds 1_{y \in B_\varepsilon(\partial D)}\,L(\dd s,  \dd y) \right |\, .
\end{align*}
Next, let $\Phi(x):=\big(\sum_{k\s 1} (1+\lambda_k)^r \Phi_k^2(x)\big)^{\frac 1 2}$, which belongs to $L^2(D)$ by \eqref{summable} since $\|\Phi_k\|_{L^2(D)} = 1$ and $r<-\frac d 2$. Hence, assuming without loss of generality that $p$ in \hyperlink{hyp2}{\textbf{(H')}} satisfies $1\I p< 1+\frac 2 d$, we have by Lemma~\ref{Daniell-explicit}:
\begin{align*}
	\|\Phi\sigma(u)\|_{L,p} &\I \|\Phi\sigma(u)\|_{L^B,p} + \|\Phi\sigma(u)\|_{L^M,p} \I |b| \left\| \int_0^T\int_D |\Phi(x)\sigma(u(t,x))|\,\dd t\,\dd x\right\|_{L^p} \\
	&\quad\qquad+ C\left\| \bigg(\int_0^T\int_D\int_{|z|\I 1} |\Phi(x)\sigma(u(t,x))z|^2\,J(\dd t,\dd x,\dd z)\bigg)^{\frac 1 2}\right\|_{L^p} \\
	&\I \int_0^T\int_D \Phi(x) \|\sigma(u(t,x))\|_{L^p}\,\dd t\,\dd x+C \left(\int_0^T\int_D \Phi^p(x) \E[|\sigma(u(t,x))|^p]\,\dd t\,\dd x\right)^{\frac 1 p}\\
	&\I C (\| \Phi\|_{L^1(D)} + \|\Phi\|_{L^p(D)}) < +\infty\,.
\end{align*}
Thus, by Theorem~\ref{change}, there exists an equivalent probability measure $\Q$ such that
\begin{equation} \label{Qnorm} \|\Phi\sigma(u)\|_{L,2,\Q}<+\infty\,. \end{equation}
Furthermore, the Doob--Meyer decomposition of $L$ under $\Q$ is given by $L={L^{B,\Q}}+ L^{M,\Q}$, where
\begin{align*} {L^{B,\Q}}(\dd t,\dd x) &= b^\Q(t,x)\,\dd t\,\dd x=  \lp b + \int_{|z|\I 1} (Y(t,x,z)-1)\,\nu(\dd z) \rp \,\dd t\,\dd x\, ,\\
L^{M,\Q}(\dd t,\dd x) &= \int_{|z|\I 1} z \,\tilde J^\Q(\dd t,\dd x,\dd z)\, ,\quad \tilde J^\Q(\dd t,\dd x,\dd z) = J(\dd t,\dd x,\dd z) - Y(t,x,z)\,\dd t\,\dd x \,\nu(\dd z) \end{align*}
with some predictable random function $Y$, see \cite[Theorem~3.6]{chong_integrability}. By \cite[Theorem~4.14]{bichteler_jacod}, we deduce that $\|\Phi\sigma(u)\|_{L^{B,\Q},2,\Q}<+\infty$ and $\|\Phi\sigma(u)\|_{L^{M,\Q},2,\Q}<+\infty$. As a consequence,
\begin{equation}\label{fin} \E_\Q\left[\left(\int_0^T\int_D \Phi(x)|\sigma(u(t,x))b^\Q(t,x)|\,\dd t\,\dd x \right)^2\right]< +\infty  \end{equation}
and 
\begin{equation}\label{fin2} \E_\Q\left[\int_0^T\int_D \int_{|z|\I 1} (\Phi(x)\sigma(u(t,x))Y(t,x,z))^2\,\dd t\,\dd x\,\nu(\dd z)\right] <+\infty\, . \end{equation}
We obtain
\begin{align*}
	\E_\Q\left[\left \| u_\eps^3 (t,\cdot) \right \|_{H_r}^2\right]&=\sum_{k\s 1} (1+\lambda_k)^r \E_\Q[(a_k^\eps(t))^2]\I C \sum_{k\s 1} (1+\lambda_k)^r \|\Phi_k\sigma(u)\mathds 1_{B_\eps(\partial D)}\|^2_{L,2,\Q}\\
	&\I C\sum_{k\s 1} (1+\lambda_k)^r \lp \|\Phi_k\sigma(u)\mathds 1_{B_\eps(\partial D)}\|^2_{L^{B,\Q},2,\Q} +  \|\Phi_k\sigma(u)\mathds 1_{B_\eps(\partial D)}\|^2_{L^{M,\Q},2,\Q}\rp\, .
\end{align*}
For the first term in the parenthesis, we use the Cauchy--Schwarz inequality to obtain 
\begin{align*} &\sum_{k\s 1} (1+\lambda_k)^r  \|\Phi_k\sigma(u)\mathds 1_{B_\eps(\partial D)}\|^2_{L^{B,\Q},2,\Q} \\
	&\qquad= \sum_{k\s 1} (1+\lambda_k)^r \E_\Q\left[\left( \int_0^T\int_D |\Phi_k(x)\sigma(u(t,x))\mathds 1_{x\in B_\eps(\partial D)}b^\Q(t,x)|\,\dd t\,\dd x\right)^2\right]\\
	&\qquad = \E_\Q\bigg[ \sum_{k\s 1} (1+\lambda_k)^r \int_0^T\int_D \int_0^T\int_D |\Phi_k(x)\Phi_k(y)\sigma(u(t,x))\sigma(u(s,y))\mathds 1_{x,y\in B_\eps(\partial D)}\\
	&\qquad\quad\qquad \times b^\Q(t,x)b^\Q(s,y)|\,\dd t\,\dd x\,\dd s\,\dd y\bigg]\\
	&\qquad \I \E_\Q\left[\int_0^T\int_D\int_0^T\int_D |\Phi(x)\Phi(y)\sigma(u(t,x))\sigma(u(s,y))\mathds 1_{x,y\in B_\eps(\partial D)}b^\Q(t,x)b^\Q(s,y)|\,\dd t\,\dd x\,\dd s\,\dd y\right]\\
	&\qquad = \E_\Q\left[\left(\int_0^T\int_D|\Phi(x)\sigma(u(t,x))\mathds 1_{x\in B_\eps(\partial D)}b^\Q(t,x)|\,\dd t\,\dd x\right)^2\right] \to 0
\end{align*}
as $\eps\to0$
by \eqref{fin} and dominated convergence. Similarly, \eqref{fin2} implies that
\begin{align*}
	&\sum_{k\s 1} (1+\lambda_k)^r  \|\Phi_k\sigma(u)\mathds 1_{B_\eps(\partial D)}\|^2_{L^{M,\Q},2,\Q} \\
	&\qquad \I C \sum_{k\s 1} (1+\lambda_k)^r  \E_\Q\left[\int_0^T \int_D \int_{|z|\I 1}(\Phi_k(x)\sigma(u(t,x)) Y(t,x,z))^2 \mathds 1_{x\in B_\eps(\partial D)} \,\dd t\,\dd x\,\nu(\dd z)\right]\\
	&\qquad = C\E_\Q\left[\int_0^T \int_D \int_{|z|\I 1}(\Phi(x)\sigma(u(t,x)) Y(t,x,z))^2 \mathds 1_{x\in B_\eps(\partial D)} \,\dd t\,\dd x\,\nu(\dd z)\right] \to 0
\end{align*}
as $\eps\to0$. Altogether, there exists a subsequence of $u^3_\eps(t,\cdot)$ that converges almost surely to $0$ in $H_r(D)$ for $r<-\frac d 2$, uniformly in $t\in[0,T]$, which completes the proof. 
\end{proof}

%%%%%%%%%%%%%%%%%%%%%%%%%%%%%%%%%%%%%%%%%%%%
%%%%%%%%%%%%%%%%%%%%%%%%%%%%%%%%%%%%%%%%%%%%

\section{Partial regularity of the solution}\label{partial-reg}

In Section~\ref{cadlag}, we have established the existence of a version such that $t\mapsto u(t,\cdot)$ has càdlàg paths in (local) fractional Sobolev spaces. The goal of the current section is to investigate the partial regularity of the solution, that is, the behavior of the partial functions $t\mapsto u(t,x)$ for fixed $x\in D$ and $x\mapsto u(t,x)$ for fixed $t\in[0,T]$. In the case where the Lévy noise $L$ has locally finite intensity (so $L$ is a compound Poisson noise), it is clear that almost surely, no jump will fall onto a fixed $t$- or $x$-section of the solution. Because the Green's function $G_D(t;x,y)$ is smooth outside $\{0\}\times\{(x,x)\colon x\in D\}$, the partial functions are continuous, and even smooth, in this case. However, a general Lévy noise can have infinitely many jumps on any compact subset of $[0,T]\times D$, which may even fail to be summable. Still they never lie on a fixed section, but may come arbitrarily close to it, so its regularity is unclear a priori. As we shall show, the answer critically depends on the Blumenthal--Getoor index of the noise (that is, the smallest $p$ for which $\int_{[-1,1]} |z|^p\,\nu(\dd z)$ is finite), and both continuous and locally unbounded sample paths may arise.

Throughout this section, we consider the stochastic heat equation \eqref{SHE} on a bounded $C^\infty$-regular domain or $D=\R^d$, with some Lipschitz continuous $\sigma\colon \R\to\R$ and some bounded continuous $u_0\colon \bar D\to\R$ that is zero on $\partial D$. Furthermore, let $L$ be a pure-jump Lévy white noise as in \eqref{noise} and $u$ be the mild solution constructed under the hypotheses in Propositions~\ref{existence}, \ref{existencerealline} or \ref{existencebdddomain}, respectively. In particular, if $D=\R^d$, we are given $p,q>0$ such that \hyperlink{hyp1}{\textbf{(H)}} is satisfied; and if $D$ is a bounded domain, there exists $p>0$ such that \hyperlink{hyp2}{\textbf{(H')}} holds. 

\subsection{Regularity in space at a fixed time}

\begin{theo}\label{continuityspacewholespace}
	In the setting described above, assume that $p<\frac 2 d$. %Furthermore, if $D=\R^d$, suppose that $\sigma$ satisfies
%	\begin{equation}\label{fractional-growth} 
%		|\sigma(x)|\I C(1+|x|^\gamma)\, ,\qquad x\in\R\, ,
%	\end{equation}
	%for some $C>0$ and $\gamma\in[0,\frac q p]$. 
	Then, for any $t\in [0,T]$, the process $x\mapsto u(t,x)$ has a continuous modification.
\end{theo}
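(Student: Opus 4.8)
The plan is to isolate, for the fixed time $t$, the genuinely rough part of $x\mapsto u(t,x)$ and treat it by a semigroup-smoothing argument, while every other part is continuous for elementary reasons. As in Section~\ref{cadlag} I would pass to the truncated solution $u_N$ (which agrees with $u$ on $\{T\I\tau_N\}$, and $\bigcup_N\{T\I\tau_N\}=\Omega$ a.s., so a continuous modification of each $u_N(t,\cdot)$ patches to one of $u(t,\cdot)$), take $N=1$, and on a fixed compact cube $K$ decompose the stochastic convolution exactly as in Propositions~\ref{cadlagsolutioninterval}, \ref{cadlagsolutionbddsigma} and Theorems~\ref{cadlagsolution}, \ref{cadlag_bdd_domain}. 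The term $V$ is jointly continuous; the drift term $b\int_0^t\int_D G_D(t-s;x,y)\sigma(u(s,y))\,\dd s\,\dd y$ is jointly continuous by Lemma~\ref{driftinterval} when $D=[0,\pi]$ and is handled by the factorization step below otherwise (only the first moment of $\sigma(u)$, finite by \eqref{moment1}--\eqref{moment-D}, is needed); the ``far'' small-jump contribution $u^{1,2}$ (and, on bounded domains, the smooth-remainder contribution) is jointly continuous in $(t,x)$ by the arguments already given; and the jumps with $|z|>1$ form a uniformly convergent series of continuous bumps away from $K$, as for $u^{2,2}$, while near $K$ they reduce to finitely many bumps $G_D(t-T_i;\cdot,X_i)\sigma(u(T_i,X_i))Z_i$, smooth in $x$ because a.s.\ no jump time $T_i$ equals the fixed $t$. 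It remains to treat the ``near'' small-jump part $\Psi(t,x):=\int_0^t\int_D\int_{|z|\I1} G_D(t-s;x,y)\sigma(u(s,y))\mathds 1_{y\in K'}z\,\tilde J(\dd s,\dd y,\dd z)$ on a slightly enlarged compact $K'$ (the indicator being omitted when $D$ is bounded).

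For $d=1$ (so $p<2$) I would bound $\Psi$ by Kolmogorov's continuity criterion (which in fact yields local Hölder continuity): choosing $\bar q\in(\max\{1,p\},2)$, the moment inequality \cite[Theorem~1]{marinelli}, the uniform $\bar q$-moment bound for $\sigma(u)$, the fact that $\int_{[-1,1]}|z|^{\bar q}\,\nu(\dd z)<+\infty$, the bound $G_D\I Cg$, and standard heat-kernel estimates (e.g.\ \cite[Lemma~B.1]{bally}) yield $\E[|\Psi(t,x)-\Psi(t,x')|^{\bar q}]\I C\int_0^t\int_D|G_D(t-s;x,y)-G_D(t-s;x',y)|^{\bar q}\,\dd s\,\dd y\I C|x-x'|^{3-\bar q}$, and since $3-\bar q>1$ this gives a continuous modification.

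For $d\s2$ this crude argument fails, and this is the main obstacle: the solution has finite moments only of order $<1+\tfrac2d\I2$, whereas the modulus $\int_0^T\int_{\R^d}|g(s,\cdot+v)-g(s,\cdot)|^{\bar q}\,\dd s\,\dd y\I C|v|^{\min\{\bar q,\,2-d(\bar q-1)\}}$ has exponent never exceeding $1+\tfrac1{d+1}<d$, so no choice of $\bar q$ makes Kolmogorov (or the embedding $B^{s}_{\bar q,\infty}\hookrightarrow C$) work. Instead I would use the classical factorization method (of Da Prato--Kwapie\'n--Zabczyk type), which exploits the ultracontractivity of the heat semigroup rather than the regularity of its kernel. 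Since $p<\tfrac2d\I1$, we have $\int_{[-1,1]}|z|\,\nu(\dd z)<+\infty$, so the small jumps may be uncompensated; fix $\rho\in[1,1+\tfrac2d)$ (a non-empty interval, and $\rho>p$, so $\int_{[-1,1]}|z|^\rho\,\nu(\dd z)<+\infty$), then $\alpha\in(0,\tfrac{2-d(\rho-1)}{2\rho})$ (positive since $\rho<1+\tfrac2d$), and $m>\max\{\rho,\tfrac d{2\alpha}\}$. Using $\int_s^t(t-r)^{\alpha-1}(r-s)^{-\alpha}\,\dd r=\tfrac\pi{\sin\pi\alpha}$, the semigroup identity $G_D(t-s;x,y)=\int_D G_D(t-r;x,z)G_D(r-s;z,y)\,\dd z$, and the stochastic Fubini theorem (Theorem~\ref{fubini}), I would rewrite $\Psi(t,\cdot)$ (plus the $b_0$-correction on bounded domains) as $c_\alpha\int_0^t(t-r)^{\alpha-1}(S_{t-r}Y(r))(\cdot)\,\dd r$, where $S_\tau$ is the Dirichlet heat semigroup and $Y(r)(x)=c_\alpha\int_0^r\int_D\int_{|z|\I1}(r-s)^{-\alpha}G_D(r-s;x,y)\sigma(u(s,y))\mathds 1_{y\in K'}z\,J(\dd s,\dd y,\dd z)$ (plus drift).

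The crux is the bound $\sup_{r\I T}\E[\|Y(r)\|_{L^m(D)}^\rho]<+\infty$. Estimating $\E[|Y(r)(x)|^\rho]$ pointwise via $\E|\!\int\!\phi\,J|^\rho\I C\E\!\int\!|\phi|^\rho\,\dd s\,\dd y\,\nu(\dd z)+C\E(\!\int\!|\phi|\,\dd s\,\dd y\,\nu(\dd z))^\rho$ (valid for $\rho\in[1,2]$ and needing only the $\rho$-th moment of $\sigma(u)$, which is finite because $\rho<1+\tfrac2d$), together with $\int_D G_D(r-s;x,y)^\rho\,\dd y\I C(r-s)^{-d(\rho-1)/2}$ and $\int_D G_D(r-s;x,y)\,\dd y\I1$, one gets $\E[|Y(r)(x)|^\rho]\I C\int_0^r(r-s)^{-\alpha\rho-d(\rho-1)/2}\,\dd s<+\infty$, uniformly in $x$ and $r$ (on $\R^d$ one first restricts $y$ to $[-2A,2A]^d$ as for $u^{1,1}$, so $Y(r)$ decays in $x$ and its $L^m_x$-norm is controlled by Young's inequality with the same bound); Minkowski's integral inequality then upgrades this to the claimed uniform bound on $\E\|Y(r)\|_{L^m(D)}^\rho$. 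Since $S_{t-r}\colon L^m(D)\to C(\bar D)$ has operator norm $O((t-r)^{-d/(2m)})$ and $\alpha-1-\tfrac d{2m}>-1$, the map $x\mapsto c_\alpha\int_0^t(t-r)^{\alpha-1}(S_{t-r}Y(r))(x)\,\dd r$ is a.s.\ a well-defined element of $C(\bar D)$, and it equals $\Psi(t,\cdot)$ a.s.\ pointwise by construction, hence is a continuous modification. Collecting all the pieces, $u(t,\cdot)$ coincides on each compact with a finite sum of continuous functions, which proves the claim.
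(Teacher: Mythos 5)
Your reduction to the truncated solution, the treatment of the drift, far-field and big-jump terms, and the Kolmogorov argument for $d=1$ all match the paper. The problem is the case $d\s 2$, where your factorization argument has a genuine gap at its central step, the bound $\sup_{r\I T}\E\lc \|Y(r)\|_{L^m(D)}^\rho\rc<+\infty$. You only establish $\sup_{r,x}\E\lc|Y(r)(x)|^\rho\rc<+\infty$ and invoke Minkowski's integral inequality to pass to the $L^m_x$-norm, but Minkowski gives $\|Y\|_{L^\rho_\omega L^m_x}\I\|Y\|_{L^m_x L^\rho_\omega}$ only when $\rho\s m$, and your own constraints force the opposite: you need $m>\frac{d}{2\alpha}>\frac{d\rho}{2-d(\rho-1)}$, and $\frac{d\rho}{2-d(\rho-1)}\s\rho$ is equivalent to $\rho\s\frac 2d$, which holds automatically since $\rho\s1\s\frac2d$. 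So $m>\rho$ necessarily, the inequality goes the wrong way, and no higher moments of $Y(r)(x)$ are available because the solution itself only has moments of order $<1+\frac2d\I 2$. (Pulling the $L^m_x$-norm inside the Poisson sum via Young's inequality runs into the same wall: the resulting time singularity $(r-s)^{-\rho\alpha-\frac{\rho d}{2}(1-\frac1m)}$ is integrable only if $\rho<\frac2d$, again impossible for $\rho\s1$.) There is also a structural warning sign: for $d\s2$ your argument uses the hypothesis $p<\frac2d$ only through the trivial consequence $p<1$, so if it were correct it would prove spatial continuity for any noise with $\int_{|z|\I1}|z|^p\,\nu(\dd z)<+\infty$ for some $p<1$ --- contradicting Theorem~\ref{label}, which exhibits unbounded paths for $\alpha$-stable noise with $\alpha\in[\frac2d,1)$ when $d\s3$. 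The spikes of $Y(r)$ created by jumps occurring just before time $r$ genuinely destroy the $L^m_x$-integrability that the factorization method needs.

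The paper's argument for $p\I1$ is far more elementary and is where $p<\frac2d$ actually enters. Since $\int_{|z|\I1}|z|\,\nu(\dd z)<+\infty$, the small jumps are summable and $u^{1,1}(t,x)$ is literally the sum $\sum_i G_D(t-T_i;x,X_i)\sigma(u(T_i,X_i))Z_i\mathds 1_{|Z_i|\I1,\dots}$ of functions continuous in $x$ (a.s.\ no $T_i$ equals the fixed $t$). Local uniform convergence of this series follows from the $p$-th moment bound \eqref{g-calc}, whose key ingredient is
\begin{equation*}
\int_0^t\int_{|y-x_0|\I1}\sup_{|x-x_0|\I1}g^p(t-s,x-y)\,\dd s\,\dd y \I C\int_0^t s^{-\frac{pd}{2}}\,\dd s<+\infty
\quad\text{iff}\quad p<\tfrac2d\,,
\end{equation*}
together with $\E\lc(\sum_i a_i)^p\rc\I\E\lc\sum_i a_i^p\rc$ for $p\I1$. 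You should replace your factorization step by this direct series argument (and handle $u^{3,1}$ by the truncation $\sigma_n$ as the paper does); the rest of your decomposition can stay as it is.
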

\begin{proof} The solution $u$ is the stationary limit of the mild solution $u_N$ to the truncated equation defined in \eqref{truncatedmildinterval}, \eqref{truncatedmild} or \eqref{uN-D} with noise $L_N$ given in \eqref{LN-interval}, \eqref{LN} or \eqref{truncatednoisebdd}, respectively.
	Therefore, we can suppose that $u=u_N$ for some $N\s 1$, and for simplicity, we only consider $N=1$.
	We prove the claim using different approaches depending on the value of $p$. 
	
\vspace{0.25\baselineskip}

\noindent
\underline{$1<p<2$:} Notice that $1<p<2$ can only occur in $d=1$ because of the hypothesis $p< \frac 2 d$. However, we keep the exponent $d$ since we will use similar ideas in the next case.
Let $A>0$ be such that $x\in(-A,A)^d$, and split $u$ into seven parts according to \eqref{def-u} and \eqref{u3-split}, with obvious changes when $D$ is bounded. In this case, we further assume that $A>0$ is large enough such that $D\subset (-A,A)^d$. Clearly, $V$ is jointly continuous, and as shown in the proof of Theorem~\ref{cadlagsolution}, %write
%	\begin{equation*}
%		u(t,x)= V(t,x)+A_1(t,x)+A_2(t,x)+A_3(t,x)\, ,
%	\end{equation*}
%	where 
%	\begin{equation}\label{bterm}
%		\begin{aligned}
%			A_1(t,x)&:=  b\int_0^t \int_\R  g(t-s, x-y)  \sigma(u(s,y))\, \dd s\,  \dd y\\
%			&\qquad+\int_0^t \int_[x-2A,x+2A] \int_{|z|\I 1} z g(t-s, x-y)  \sigma (u(s,y)) \,\tilde J(\dd s , \dd y, \dd z) \, , \\
%			B(t,x)&:=\int_0^t \int_\R \int_{|z|> 1} z g(t-s, x-y) \mathds 1_{|z|<1+|y|^\eta}  \sigma (u(s,y)) \,J(\dd s , \dd y, \dd z)\, .
%		\end{aligned}
%	\end{equation}
the same is true for $u^{1,2}\mathds 1_{[-A,A]^d}$, $u^{2,2}\mathds 1_{[-A,A]^d}$ and $u^{3,2}$ in the case $D=\R^d$, while they are zero if $D$ is bounded. Furthermore, almost surely, $u^{2,1}$ consists of finitely many jumps, none of which occur at time $t$, so $x\mapsto u^{2,1}(t,x)$ is smooth because the Green's function $G_D(t;x,y)$ is so for $t>0$.
%	Also, because 
%	\begin{equation}\label{polbound}
%		\sup_{(t,x)\in[0,T]\times\R^d} \E \lc |\sigma(u(t,x))|^p \rc \I C\left(1+  \sup_{(t,x)\in[0,T]\times\R} \E \lc |u(t,x)|^q \rc \right)< \infty\, ,
%	\end{equation}
%	by \eqref{fractional-growth} and \cite[Theorem 3.8]{Chongheavytailed}, 
It remains to consider $u^{1,1}+u^{3,1}$, for which we apply the Kolmogorov continuity criterion. We have
	\begin{equation*}
		\begin{aligned}
			&\E[|(u^{1,1}+u^{3,1})(t,x)- (u^{1,1}+u^{3,1})(t,z)|^p]\\
			&\qquad =\E\lc \left | b\int_0^t \int_{[-2A,2A]^d} \lp G_D(t-s; x,y) - G_D(t-s; z,y)\rp  \sigma (u(s,y))\, \dd s\,  \dd y \right. \right. \\
			&\qquad\qquad +\left . \left.\int_0^t \int_{[-2A,2A]^d} \lp G_D(t-s;x,y) - G_D(t-s; z,y) \rp \sigma (u(s,y)) \,L^M(\dd s , \dd y) \right |^p \rc\\
			&\qquad \I C \lp \E\lc \left | b\int_0^t \int_{[-2A,2A]^d}  \lp G_D(t-s;x,y) - G_D(t-s; z,y)\rp  \sigma (u(s,y)) \,\dd s  \,\dd y \right |^p \rc \right.\\
			&\qquad \qquad+\left. \E\lc \left | \int_0^t \int_{[-2A,2A]^d}  \lp G_D(t-s;x,y) - G_D(t-s; z,y) \rp \sigma (u(s,y))\, L^M(\dd s , \dd y) \right |^p \rc \rp 
		\end{aligned}
	\end{equation*} 
for any $x,z\in D$.
	Then, using Hölder's inequality and \eqref{p-moment} or \eqref{moment1},
	\begin{equation*}
		\begin{aligned}
			& \E\lc \left | b\int_0^t \int_{[-2A,2A]^d}  \lp G_D(t-s;x,y) - G_D(t-s; z,y)\rp  \sigma (u(s,y)) \,\dd s\,  \dd y \right |^p \rc \\
			&\qquad \I \lp \int_0^t \int_{[-2A,2A]^d}  \left | G_D(t-s;x,y) - G_D(t-s; z,y)\right | \E \lc \left |  \sigma (u(s,y)) \right |^p \rc \, \dd s\,  \dd y\rp\\
			&\qquad \qquad \times \lp \int_0^t \int_{[-2A,2A]^d}  \left | G_D(t-s;x,y)- G_D(t-s; z,y)\right | \, \dd s \, \dd y \rp^{p-1}\\
			&\qquad \I C \lp \int_0^t \int_{D}  \left | G_D(t-s;x,y) - G_D(t-s; z,y)\right | \, \dd s \, \dd y \rp^{p} \,. % \I C |x-z|^p\, .
		\end{aligned}
	\end{equation*}
For $D=\R$, the last term is bounded by $C |x-z|^p$, see \cite[Lemme~A2]{loubert}; for $D=[0,\pi]$, we can take the power $p$ inside the integral by Hölder's inequality, and further assume that $\frac 3 2<p<2$ (by \eqref{p-moment} there is no harm in taking a larger value of $p$ on bounded domains). Then the upper bound becomes $C |x-z|^{3-p}$ by \cite[Lemma~B.1(a)]{bally}.
%	By , we deduce that
%	\begin{equation}\label{termA1}
%		\E\lc \left | b\int_0^t \int_\R  \lp g(t-s, x-y) - g(t-s, z-y)\rp  \sigma (u(s,y)) \dd s  \dd y \right |^p \rc \I C |x-z|^p\, .
%	\end{equation}
	For the martingale part, we have by \cite[Theorem~1]{marinelli} and \eqref{moment1}, 
	\begin{equation*}
		\begin{aligned}
			&\E\lc \left | \int_0^t \int_{[-2A,2A]^d} \lp G_D(t-s;x,y) - G_D(t-s; z,y)\rp \sigma (u(s,y)) \, L^M(\dd s , \dd y) \right |^p \rc\\
			&\qquad \I C \int_0^t \int_{[-2A,2A]^d} \int_{|z|\I 1} |z|^p \left | G_D(t-s;x,y) -G_D(t-s; z,y) \right |^p \E\lc\left| \sigma (u(s,y))\right |^p \rc \,\dd s\, \dd y \, \nu(\dd z)\\
			&\qquad \I C  \int_0^t \int_{D}  \left | G_D(t-s;x,y)- G_D(t-s; z,y)\right |^p \,  \dd s\, \dd y\,.
				\end{aligned}
	\end{equation*}
If $D=\R^d$, then according to \cite[Lemme~A2]{loubert}, this is bounded by
\[
			 \begin{cases} C |x-z|^p \, , &  \text{if} \ p<\frac 3 2\, , \\ C|x-z|^{\frac 3 2}\log\lp |x-z|\rp & \text{if} \ p=\frac 3 2\, , \\C |x-z|^{3-p} & \text{if} \ p>\frac 3 2\, ;\end{cases} \]
and if $D=[0,\pi]$, and we take $\frac 3 2<p<2$, then the upper bound we obtain is again $C|x-z|^{3-p}$ by \cite[Lemma~B.1(a)]{bally}.
	Thus, the Kolmogorov continuity criterion (see e.g.\ \cite[Chapter~1, Corollary~1.2]{spdewalsh}) ensures the existence of a continuous modification of $u^{1,1}+u^{3,1}$ in the space variable $x$.
	
%	The term ${u^{2,2}}$ in \eqref{def-u} is a sum of a possibly infinitely many terms. Each term is continuous in $x$. And for $\beta\I q\wedge 1$, we have
%	we get for any $x_0\in \R$,
%	\begin{equation*}
%		\begin{aligned}
%			\E & \lc \sup_{x: |x-x_0|\I 1} \left | \int_0^t \int_\R \int_{|z|> 1} z g(t-s, x-y) \mathds 1_{|z|<1+|y|^\eta}  \sigma (u(s,y)) J(\dd s , \dd y, \dd z) \right |^\beta \rc \\
%			&\hspace{0.5cm}\I  \int_0^t \int_\R \int_{|z|> 1} |z|^\beta \sup_{x: |x-x_0|\I 1} g(t-s, x-y)^\beta \mathds 1_{|z|<1+|y|^\eta} \E\lc \left | \sigma (u(s,y)) \right |^\beta \rc \, \dd s \, \dd y \, \nu(\dd z) 
%		\end{aligned}
%	\end{equation*}
%	\begin{equation*}
%		\begin{aligned}
%			&\hspace{0.5cm}\I C  \int_0^t \int_\R  \sup_{x: |x-x_0|\I 1} g(t-s, x-y)^\beta   \dd s \, \dd y \\
%			&\hspace{0.5cm}\I C\lp  \int_0^t \int_{y:|y-x_0|\I1}  g(t-s, 0)^\beta \dd s  \, \dd y +2\int_0^t \int_{x_0+1}^\infty  g(t-s, x_0+1-y)^\beta \dd s  \, \dd y \rp <+\infty\, .
%		\end{aligned}
%	\end{equation*}
%	We deduce that the convergence of the sum defining $B(t,x)$ is uniform in $x$ in a ball around $x_0$, which proves the claim for $1<p<2$. 

\vspace{0.25\baselineskip}

\noindent
\underline{$0<p\I 1$:} 
We use the same decomposition of $u$ as above, except that we replace $b$ by $b_0$ and $L^M(\dd t,\dd x)$ by $\int_{|z|\I 1} z \,J(\dd t,\dd x,\dd z)$. The proofs for $V$, $u^{2,1}$ and $u^{2,2}$ are not affected by this change, and up to the modification indicated at the end of the proof of Theorem~\ref{cadlagsolution}, the proof for $u^{1,2}$ is not affected, either. Since $\int_{|z|\I 1} |z|\,\nu(\dd z) < +\infty$, the small jumps of $L$ are summable and $u^{1,1}$ is actually a sum of possibly infinitely many terms, each of which is continuous in $x$ because no jump occurs exactly at time $t$. %We show that this sum converges locally uniformly in $x$. 
Furthermore, by \cite[Corollary~3.2.8]{davies}, we have for $x_0\in D$,
	\begin{equation}\label{g-calc}
		\begin{aligned}
			&\E  \lc  \left( \int_0^t \int_{[-2A,2A]^d} \int_{|z|\I 1} \sup_{x: |x-x_0|\I 1} G_D(t-s;x,y)  \left | z  \sigma (u(s,y))\right | \,J(\dd s , \dd y, \dd z) \right )^p \rc \\
			&\qquad\I  \int_0^t \int_{[-2A,2A]^d} \int_{|z|\I 1} |z|^p \sup_{x: |x-x_0|\I 1} G^p_D(t-s;x,y)   \E\lc \left | \sigma (u(s,y)) \right |^p \rc \, \dd s \, \dd y \, \nu(\dd z) \\
			%&\qquad\I  \int_0^t \int_{[-2A,2A]^d} \int_{|z|\I 1} |z|^p \sup_{x: |x-x_0|\I 1} g^p(t-s,x-y)   \E\lc \left | \sigma (u(s,y)) \right |^p \rc \, \dd s \, \dd y \, \nu(\dd z) \\
			&\qquad\I C  \int_0^t \int_{\R^d}  \sup_{x: |x-x_0|\I 1} g^p(t-s, x-y)  \, \dd s \, \dd y \\
			&\qquad= C\lp  \int_0^t \int_{|y-x_0|\I1}  g^p(s, 0) \,\dd s  \, \dd y +\int_0^t \int_{|y-x_0|>1}\lp 4\pi s\rp^{-\frac{p d}{2} }e^{-\frac{p (|y-x_0|-1)^2}{4s}} \,\dd s  \, \dd y \rp \,,
		%	&\qquad= C\lp  \int_0^t \int_{|y-x_0|\I1}  g^p(s, 0)\, \dd s  \, \dd y +\int_0^t \int_{|y|>1}\lp 4\pi s\rp^{-\frac{p d}{2} }e^{-\frac{p (|y|-1)^2}{4s}}\, \dd s\, \dd y \rp 
		\end{aligned}
	\end{equation} 
	which is finite because $p<\frac 2 d$. So the sum defining $u^{1,1}$ converges locally uniformly in $x$, which implies that $x\mapsto u^{1,1}(t,x)$ is continuous almost surely. Recalling that $b_0 = 0$ for $p<1$ and $D=\R^d$, $u^{3,1}$ and $u^{3,2}$ are non-zero in this case only if $p=1$. Then $u^{3,2}(t,x)$ is jointly continuous in $(t,x)$, as shown in the proof of Theorem~\ref{cadlagsolution}. If $D$ is bounded, $u^{3,2}$ is zero. For $u^{3,1}$ we use the approximation sequence $u^{3,1}_n$ defined after \eqref{u3-split}. For each $n$, the process $x\mapsto u^{3,1}_n(t,x)$ is continuous because
	\[ |u^{3,1}_n(t,x)-u^{3,1}_n(t,x')|\I C \int_0^T\int_D |G_D(t-s;x,y)-G_D(t-s;x',y)|\,\dd s\,\dd y\to 0 \]
	as $x'\to x$, see \cite[Lemme~A2]{loubert} for $D=\R^d$ and the proof of \cite[Proposition~5]{sanzsole03} for bounded $D$.
	Hence, it suffices to prove that for fixed $t$, $u^{3,1}_n(t,x)$ converges to $u^{3,1}(t,x)$, locally uniformly in $x$. By dominated convergence, this can be reduced to showing
	\[ \int_0^t \int_{[-2A,2A]^d} \sup_{x: |x-x_0|\I 1} G_D(t-s;x,y)|\sigma(u(s,y))|\,\dd s\,\dd y<+\infty\quad\text{a.s.} \]
But this follows from \eqref{g-calc} together with \eqref{p-moment}, \eqref{moment1} and \eqref{moment-D}, respectively, by taking expectation.
\end{proof}
%
%\begin{prop}\label{spaceregbdddomain}
%	Let $L$ be a pure jump Lévy white noise as in \eqref{noiseD} with Lévy measure $\nu$. Suppose that for some $0<p\I \frac 2 d$, $\int_{|z|\I 1}|z|^p \nu(\dd z)<\infty$. Assume as in \eqref{bzeroD} that $b_0=b-\int_{|z|\I1} z \nu(\dd z)=0$. Let $t\in [0,T]$ be fixed. Then the process $x\mapsto u(t,x)$ defined in Proposition \ref{existencebdddomain} is continuous.
%\end{prop}
%\begin{proof} By the stopping time argument developed after the proof of Proposition \ref{existencebdddomain}, we can suppose that $u$ is solution to \eqref{newsolutiondomain}. Since $d\s 2$ and $p\I \frac 2 d$, the jumps are summable and we can write:
%	$$u(t,x)=\sum_{i\s 1} Z_i G_D(t-T_i; x, X_i) \sigma\lp u(T_i, X_i) \rp\, .$$
%	Since a.s., no jumps occurs at time $t$, each term of this sum is continuous in $x$, and we now prove that the convergence is uniform in $x$ on compact sets. Also, by \eqref{upperbound}, we can use the same estimates as in the proof of Proposition \ref{continuityspacewholespace} (case $p\I 1$) for the uniform convergence of the sum defining the term $A$, so we deduce that $x\mapsto u(t,x)$ is continuous.
%\end{proof}

\begin{rem}\label{tructruc}
	In particular, any (tempered) $\alpha$-stable noise with $\alpha\in (0,\frac 2 d)$ (and $b_0=0$ if $D=\R^d$ and $\alpha<1$) satisfies the hypothesis of Proposition~\ref{continuityspacewholespace}. The same holds for (variance-)gamma noises for all $d\s1$, inverse Gaussian noises for $d=1,2,3$ and normal inverse Gaussian noises for $d=1$ (cf.\ \cite{cont}).
\end{rem}

The next theorem shows that the value $\frac 2 d$ in the previous theorem is essentially optimal.
\begin{theo}\label{label}
	Let $\sigma=1$ and suppose there is $\delta>0$ such that the Lévy measure of $L$ satisfies
	\begin{equation} \label{nu-alpha}\nu(\dd z)=\frac{f(z)}{|z|^{\alpha+1}} \,\dd z \end{equation} 
	for $z\in [-\delta, \delta]$, where $\alpha\in[\frac 2 d,(1+\frac 2 d)\wedge 2)$ and $f\colon [-\delta,\delta]\to [0,+\infty)$ is measurable with $f(0)\neq 0$ and 
	\begin{equation}\label{almostalphabdd}
		\int_{-\delta}^\delta \frac{|f(z)-f(0)|}{|z|^{\alpha+1}}|z|^r\,\dd z<+\infty
	\end{equation}
for some $0<r<\frac 2 d$.
%	$$u(t,x)=\int_0^t \int_{\R^d} g(t-s, x-y)L_\alpha(\dd s, \dd y) \, .$$
Then for fixed $t\in [0,T]$, the path $x\mapsto u(t,x)$ is unbounded on any non-empty open subset of $D$ with probability $1$.
\end{theo}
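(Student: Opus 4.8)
The plan is to exploit the fact that, since $\sigma=1$, the solution is explicitly $u(t,x)=V(t,x)+\int_0^t\int_D G_D(t-s;x,y)\,L(\mathrm d s,\mathrm d y)$, with $V$ continuous, so unboundedness of $x\mapsto u(t,x)$ on every open set reduces to unboundedness of the stochastic convolution $x\mapsto\int_0^t\int_D G_D(t-s;x,y)\,L(\mathrm d s,\mathrm d y)$. The strategy is to isolate the contribution of jumps that are \emph{close in space} to a given point and have a \emph{moderate size}: for a point $x_0\in D$, a small spatial ball $B(x_0,\rho)$, and a jump $(T_i,X_i,Z_i)$ of $J$ with $X_i\in B(x_0,\rho)$, $T_i$ close to (but below) $t$, and $Z_i$ of a suitable size, the term $G_D(t-T_i;x,X_i)Z_i$ is large for $x=X_i$ because $G_D(t-T_i;X_i,X_i)\sim(t-T_i)^{-d/2}$ blows up as $T_i\uparrow t$. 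First I would fix a countable dense family of points and balls, and a deterministic level $M$, and show that inside any fixed open set $U\subset D$ there is, almost surely, a jump producing a spike exceeding $M$; since $M$ is arbitrary, this gives unboundedness on $U$, and a countable intersection over a basis of the topology upgrades this to: almost surely, simultaneously on every open set.

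The key steps, in order: (1) Reduce to the stochastic convolution by continuity of $V$ and, if needed, absorb the finitely many large jumps (those with $|z|>1$, or $|z|>Nh(x)$) into the continuous part using the stopping-time localisation, so that effectively we may work with $L^M$ plus a drift; note the drift term $b\int_0^t\int_D G_D(t-s;x,y)\,\mathrm d s\,\mathrm d y$ is bounded and continuous (cf.\ the estimates in the proof of Theorem~\ref{cadlagsolution}), so it is harmless. (2) For a fixed open cube $Q\subset D$, a fixed $t$, and parameters $\eta\in(0,\delta)$ and $\varepsilon>0$ small, consider the (compound-Poisson-type) random measure obtained by restricting $J$ to $[t-\varepsilon,t)\times Q\times\{z:\ \eta/2<|z|\le\eta\}$; since $\nu$ behaves like $|z|^{-\alpha-1}f(z)\,\mathrm d z$ near $0$ with $f(0)\ne0$, this restricted measure has finite but positive intensity $\lambda_{\varepsilon,\eta}=\varepsilon\,|Q|\,\nu(\{\eta/2<|z|\le\eta\})$, so with positive probability it has exactly one atom $(T_1,X_1,Z_1)$. (3) Show that on this event, evaluated at $x=X_1$, the spike $G_D(t-T_1;X_1,X_1)|Z_1|\ge c\,(t-T_1)^{-d/2}\,\eta/2$ is, with probability bounded below, at least $M$ — this uses $G_D(t-s;x,x)\ge c\,(t-s)^{-d/2}$ for $s$ close to $t$ and $x$ in a compact subset of $D$ (true for the heat kernel on $\mathbb R^d$, and for $G_D$ via the decomposition $G_D=g+H$ with $H$ bounded near the diagonal away from $\partial D$, as in \eqref{decompositiongreen}), together with the fact that $t-T_1$ can be taken of order $\varepsilon$ and $\eta$ is fixed; we must also control the \emph{rest} of the stochastic convolution (all other jumps, plus the small-jump martingale part) uniformly over $x\in Q$ so that it does not cancel the spike. (4) For the control of the remainder, use a moment estimate (e.g.\ \cite[Theorem~1]{marinelli} or the $L^p$-bounds with $p<2/d$ available from \eqref{moment1} / \eqref{moment-D}, or a maximal inequality as in the proof of Theorem~\ref{continuityspacewholespace}) to show $\sup_{x\in Q}|\text{remainder}(t,x)|$ is finite a.s., hence bounded by some finite random variable; then choosing $M$ large relative to that bound on a set of probability close to one, combined with the positive-probability spike event, yields a jump with $u(t,X_1)\ge M$ inside $Q$. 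Iterating over disjoint sub-time-windows $[t-\varepsilon_k,t-\varepsilon_{k+1})$ with $\varepsilon_k\downarrow0$ gives independent trials, so by Borel--Cantelli infinitely many such spikes occur inside $Q$, forcing $\sup_{x\in Q}u(t,x)=+\infty$ almost surely. (5) Take the union of null sets over a countable basis $\{Q_m\}$ of the topology of $D$ to conclude that, almost surely, $x\mapsto u(t,x)$ is unbounded above on every non-empty open subset of $D$; unboundedness (below as well) then follows by symmetry of $\nu$ near $0$, or simply since unbounded above suffices for the stated conclusion.

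The main obstacle I expect is step (4): making the spike from a single well-placed jump genuinely dominate the superposition of all the other (infinitely many, possibly non-summable) small jumps and the drift, \emph{uniformly in $x$ over the open set}. The delicate point is that the condition $\alpha<1+\tfrac2d$ is exactly what guarantees the remaining stochastic convolution has locally bounded paths in $x$ (indeed this is where the spatial $L^p$-integrability $\int_0^t\int g^p(s,y)\,\mathrm d s\,\mathrm d y<\infty$ for $p=\alpha$ — valid iff $p\alpha<2$, i.e.\ $\alpha<1+\tfrac2d$... more precisely $pd/2<1$ needs refinement — enters), whereas $\alpha\ge\tfrac2d$ forces the diagonal spikes to be non-integrable and thus genuinely unbounded. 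One must therefore carefully separate scales: fix the jump size $\eta$ bounded away from $0$ so its spike is of deterministic order $(t-T_1)^{-d/2}$, while the contribution of the jumps of size $\le\eta/2$ together with those of size in $(\eta,1]$ away from the chosen time window is estimated in $L^p$ and shown to be $O(1)$ in probability uniformly in $x$; the hypothesis \eqref{almostalphabdd} with $r<2/d$ is what is needed to compare $\nu$ with the pure $\alpha$-stable measure in these estimates. Getting all the quantifiers (first choose $Q_m$, then $M$, then $\varepsilon$, then run Borel--Cantelli, then intersect over $m$) in the right order is the bookkeeping-heavy heart of the argument.
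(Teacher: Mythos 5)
Your strategy (a single well-placed jump of size $|z|\in(\eta/2,\eta]$ creates a spike of order $(t-T_1)^{-d/2}\eta$ that dominates everything else) has a genuine gap at exactly the point you flag as the main obstacle, step (4), and it cannot be repaired within your framework. After removing the jumps with $|Z_i|\in(\eta/2,\eta]$, the ``remainder'' still contains \emph{all} jumps of size $\I \eta/2$, whose Lévy measure still behaves like $f(0)|z|^{-\alpha-1}\,\dd z$ near the origin with $\alpha\s\frac 2d$; hence $\int_{|z|\I\eta/2}|z|^p\,\nu(\dd z)=+\infty$ for every $p<\frac 2d$, Theorem~\ref{continuityspacewholespace} does not apply to it, and the remainder is itself unbounded on every open subset of $Q$ --- by precisely the phenomenon being proved. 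No choice of $\eta>0$ makes $\sup_{x\in Q}|\text{remainder}(t,x)|$ finite a.s., so the spike can never be shown to dominate uniformly in $x$. Your Borel--Cantelli step is also backwards: with jump sizes confined to $(\eta/2,\eta]$, the number of atoms with $X_i\in Q$ and $(t-T_i)^{-d/2}|Z_i|>M$ is Poisson with mean at most $|Q|\,\nu(\{\eta/2<|z|\I\eta\})\,(C\eta/M)^{2/d}<+\infty$, and the probabilities that your disjoint time windows contain an atom are summable; the \emph{first} Borel--Cantelli lemma then shows that a.s.\ only finitely many spikes exceed level $M$, so jumps bounded away from $0$ cannot produce arbitrarily large spikes with probability one. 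The divergence that actually drives unboundedness is $\int_{|z|\I\delta}|z|^{2/d}\,\nu(\dd z)=+\infty$ (equivalent to $\alpha\s\frac 2d$), which forces the use of jumps of arbitrarily small size --- and then you are back to the first problem.

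The paper's proof avoids any uniform control of a remainder. Using \eqref{almostalphabdd}, it decomposes $\nu=\nu_1-\nu_2+\nu_3+\nu_4$, where $\nu_3(\dd z)=f(0)|z|^{-\alpha-1}\mathds 1_{|z|\I\delta}\,\dd z$ is exactly stable and $\nu_1,\nu_2,\nu_4$ each satisfy the hypothesis of Theorem~\ref{continuityspacewholespace} (a finite $r$-moment near $0$ with $r<\frac 2d$), so the corresponding solutions are continuous in $x$ and only the purely $\alpha$-stable part matters. For that part, $x\mapsto\int_0^t\int_D G_D(t-s;x,y)\,L_3(\dd s,\dd y)$ is an $\alpha$-stable process in standard integral form, and the Samorodnitsky--Taqqu \emph{necessary} condition for sample boundedness fails because $\int_0^t\int_D\lp\sup_{x\in B_{x_0}(\delta)}G_D(t-s;x,y)\rp^\alpha\dd s\,\dd y\s C\int_0^t(t-s)^{-\frac{\alpha d}{2}}\dd s=+\infty$ when $\alpha\s\frac 2d$, by the Gaussian lower bound on $G_D$. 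If you wish to keep a jump-by-jump argument, you would have to replace your uniform bound on the remainder by such a global criterion, or evaluate the remainder only at the (independent) random spike locations and run a substantially more delicate argument.
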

\begin{proof} Fix $t\in [0,T]$. Since $V$ is continuous in $(t,x)$, it suffices to show the unboundedness of 
	$$Y(x)=\int_0^t \int_{D} G_D(t-s;x,y)\,L(\dd s, \dd y),\quad x\in D\,.$$
	We start with the case where $f$ is constant, that is, $L$ is an $\alpha$-stable noise. Then $(Y(x)\colon x\in D)$ is an $\alpha$-stable process given in the form of \cite[(10.1.1)]{taqqu} with $E=[0,T]\times D$ and control measure $\dd s \, \dd y$. We shall check that the necessary condition \cite[(10.2.14)]{taqqu} for sample path boundedness in \cite[Theorem~10.2.3]{taqqu} is not satisfied, in particular that for $x_0 \in D$, and $\delta$ such that $B_{x_0}(\delta) \subset D$,
		\begin{equation}
		\label{necessarycondition}
		\int_{0}^{t}\int_{D}  \lp \sup_{x\in B_{x_0}(\delta)} G_D(t-s,x,y) \rp^\alpha\, \dd s \, \dd y =+\infty\, .
	\end{equation}
	Indeed, by \cite[Theorem~2 and Lemma~9]{gaussianlowerbound}, for any $x,y \in B_{x_0}(\delta)$, 
	\begin{equation}\label{lowerbound}
		G_D(t-s,x,y)\s C g(t-s,x-y)\, ,
	\end{equation}
which implies that
	\begin{align*}
		\int_{0}^{t}\int_{D} \sup_{x\in B_{x_0}(\delta)} G_D^\alpha(t-s;x,y)\, \dd s \, \dd y %&\s  \int_{0}^{t}\int_{B_{x_0}(\delta)} \sup_{x\in B_{x_0}(\delta)} G_D^\alpha(t-s;x,y)\, \dd s \, \dd y    \\
		&\s C \int_{0}^{t}\int_{B_{x_0}(\delta)} \frac{1}{\lp 4\pi (t-s) \rp^{\frac{\alpha d}{2}}} \,\dd s \, \dd y= +\infty \, ,
	\end{align*}
	and \eqref{necessarycondition} is proved.
	In the case of general $f$, we write
	\begin{equation*}
		\begin{aligned}
			\nu(\dd z)  &= \nu_1(\dd z) - \nu_2(\dd z)+\nu_3(\dd z) +\nu_4(\dd z)\\
			&=\left(\frac{\lp f(z)-f(0)\rp_+}{|z|^{\alpha+1}} -  \frac{\lp f(z)-f(0)\rp_-}{|z|^{\alpha+1}}+ \frac{f(0)}{|z|^{\alpha+1}}\right)\mathds 1_{z \in [-\delta , \delta]}\,\dd z+\mathds 1_{z \in [-\delta , \delta]^c}\,\nu(\dd z)\, ,
		\end{aligned}
	\end{equation*}
	and decompose $L$ accordingly into $L_1-L_2+L_3+L_4$ such that for $1\I i\I 4$,  $L_i$ has Lévy measure $\nu_i$ and is independent of the other three parts. If $u_i$ solves the additive heat equation with driving noise $L_i$, then by \eqref{almostalphabdd} and Theorem~\ref{continuityspacewholespace}, for any fixed $t\in [0,T]$, $x\mapsto u_1(t,x)$, $x\mapsto u_2(t,x)$ and $x\mapsto u_4(t,x)$ each has a continuous version. And since the first part of the proof shows that $x\mapsto u_3(t,x)$ is unbounded on any open subset of $D$, the same property holds for $x\mapsto u(t,x)$.  %$\nu(\dd z)=\frac{e^{-\lambda |z|}}{|z|^{\alpha+1}}$, $\alpha \in [1,2)$.
\end{proof}

\begin{rem} Taking $f\equiv 1$, Theorem~\ref{label} shows that for the solution $u$ to the heat equation with an additive $\alpha$-stable noise where $\frac 2 d \I \alpha <1+\frac 2 d$ (since $\alpha<2$, this can only occur for $d\s 2$), for any $t\in[0,T]$, $x\mapsto u(t,x)$ is unbounded on any non-empty open subset of $D$. The same holds true if $\nu$ has the form \eqref{nu-alpha} with $\alpha$ in the same range and some $f$ that is $(\alpha-\frac 2 d +\eps)$-Hölder continuous at $0$. For $d\s2$, this includes tempered stable Lévy noise (with stability index in the indicated range) and normal inverse Gaussian noises.
\end{rem}

%%%%%%%%%%%%%%%%%%%%%%%%%%%%%%%%%%%%%%%%%%%%
%%%%%%%%%%%%%%%%%%%%%%%%%%%%%%%%%%%%%%%%%%%%

\subsection{Regularity in time at a fixed space point}
\begin{theo}\label{fixedspacewholespace} In the set-up described at the beginning of Section~\ref{partial-reg}, assume that $0<p<1$. Then for any $x\in D$, 
	the process $t\mapsto u(t,x)$ has a continuous modification.
\end{theo}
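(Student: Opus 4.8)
The plan is to use that $0<p<1$ makes the small jumps of $L$ summable: after the usual localization the solution, evaluated at the fixed point $x$, is an almost surely absolutely convergent countable sum of functions each continuous in $t$, and the only real issue is its local uniform convergence in $t$. (Unlike for the spatial regularity in Theorem~\ref{continuityspacewholespace} one does not need Kolmogorov's criterion, and the critical exponent $1$ turns out to be independent of $d$, in contrast to the exponent $\frac 2 d$ there.)

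I would first reduce to the truncated solution $u_N$ with $N=1$, as in the proofs of Theorems~\ref{cadlagsolution-general}, \ref{cadlagsolution}, \ref{cadlag_bdd_domain}; the path properties of $u$ and of $u_N$ for large $N$ coincide. Since $\int_{|z|\I1}|z|^p\,\nu(\dd z)<+\infty$ with $p<1$, we have $\int_{|z|\I1}|z|\,\nu(\dd z)<+\infty$, so $L^M$ may be replaced by the uncompensated integral $\int_{|z|\I1}z\,J$ after changing the drift $b$ to $b_0=b-\int_{|z|\I1}z\,\nu(\dd z)$ (with $b_0=0$ when $D=\R^d$, by \hyperlink{hyp1}{\textbf{(H)}}). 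I would then carry out the proof for $D=\R^d$ in detail and note that the cases $D=[0,\pi]$ and $D$ a bounded domain are analogous and in fact simpler (no large jumps survive the truncation there, and, $D$ being bounded, there is no far-field term). For $D=\R^d$, writing $Z(s,y)=\sigma(u(s,y))$ and splitting $u$ as in \eqref{def-u}--\eqref{u3-split} with $A$ so large that $x\in(-A,A)^d$: by the $0<p<1$ part of the proof of Theorem~\ref{cadlagsolution}, $V$, $u^{1,2}$ and $u^{2,2}$ are jointly continuous in $(t,x)$ on $[0,T]\times[-A,A]^d$, $u^3=0$, and $u^{2,1}$ is a finite sum of terms that are each continuous in $t$ (each vanishes for $t<T_i$ and at $t=T_i$, since $X_i\neq x$ almost surely). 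Hence for fixed $x$ all these are continuous in $t$, and only $u^{1,1}$ remains.

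Writing $(T_i,X_i,Z_i)$ for the atoms of $J$, the above modification gives
\[ u^{1,1}(t,x)=\sum_{i\s1}g(t-T_i,x-X_i)\,Z(T_i,X_i)\,Z_i\,\mathds 1_{\{X_i\in[-2A,2A]^d,\ |Z_i|\I1,\ T_i\I t\}}\,. \]
Each summand is continuous in $t\in[0,T]$: it vanishes for $t<T_i$, and as $t\downarrow T_i$ it tends to $0$ by \eqref{heat-kernel}, because almost surely no atom of $J$ lies on the section $\{X_i=x\}$ (as recalled at the beginning of this section). Continuity of the sum then follows once I show $\sum_{i\s1}\sup_{t\in[0,T]}|g(t-T_i,x-X_i)Z(T_i,X_i)Z_i|\mathds 1_{\{X_i\in[-2A,2A]^d,\,|Z_i|\I1\}}<+\infty$ almost surely. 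Using $\sup_{h>0}g(h,x-y)\I C|x-y|^{-d}$ (Lemma~\ref{maxheatkernel}), the inequality $\big(\sum_i a_i\big)^p\I\sum_i a_i^p$ for $a_i\s0,\ p\I1$, the bound $|\sigma(v)|^p\I C(1+|v|^p)$, and the moment estimate \eqref{moment1} for the truncated solution (giving $\sup_{(s,y)\in[0,T]\times[-2A,2A]^d}\E[|Z(s,y)|^p]<+\infty$), I would bound the $p$-th moment of that sum by
\[ C\int_{[-2A,2A]^d}|x-y|^{-dp}\,\dd y\int_{|z|\I1}|z|^p\,\nu(\dd z)\,, \]
which is finite, the first integral precisely because $dp<d$, i.e.\ $p<1$. (For bounded $D$ or $D=[0,\pi]$ the same bound with $G_D$ in place of $g$ works directly, without the spatial cutoff, using $\sup_{h>0}G_D(h;x,y)\I C|x-y|^{-d}$ and $D$ being bounded.) Thus the series for $u^{1,1}(\cdot,x)$ converges uniformly on $[0,T]$ almost surely, so $t\mapsto u^{1,1}(t,x)$ is continuous, and hence so is $t\mapsto u(t,x)$.

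The genuinely new step is the short estimate for $u^{1,1}$; the main obstacle is rather bookkeeping --- keeping track of the (slightly different) reductions and decompositions for $D=[0,\pi]$, $D=\R^d$ and bounded $D$, and invoking the joint continuity in $(t,x)$ of the far-field terms $u^{1,2},u^{2,2}$ and of the drift for \emph{Lipschitz} (not just bounded) $\sigma$, all of which is already contained in the proofs of Theorems~\ref{cadlagsolution} and \ref{cadlag_bdd_domain}.
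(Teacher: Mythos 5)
Your reduction, decomposition and treatment of the stochastic part coincide with the paper's proof: the paper also reduces to $u_1$, reuses the decomposition from the ``$0<p\I 1$'' part of the proof of Theorem~\ref{continuityspacewholespace}, disposes of $V$, $u^{1,2}$, $u^{2,1}$, $u^{2,2}$ exactly as you do, and for $u^{1,1}$ needs precisely the uniform-in-$t$ summability of the uncompensated jump sum. The only difference there is cosmetic: the paper cites \cite[Théorème 2.2.2]{loubert} (extended to bounded $D$ via the Gaussian domination $G_D(t;x,y)\I Ct^{-d/2}e^{-|x-y|^2/(6t)}$), whereas you prove the needed bound directly from $\sup_{h>0}g(h,x-y)\I C|x-y|^{-d}$, subadditivity of $v\mapsto v^p$, and $\int_{[-2A,2A]^d}|x-y|^{-dp}\,\dd y<+\infty$; this is correct and is in fact the same computation the paper carries out in \eqref{g-calc} for the spatial analogue.

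There is, however, one genuine gap: you dismiss the bounded-domain case as ``simpler'' and never treat the drift term there. After replacing $L^M$ by the uncompensated integral, the drift coefficient becomes $b_0=b-\int_{|z|\I1}z\,\nu(\dd z)$, and hypothesis \hyperlink{hyp1}{\textbf{(H)}} forces $b_0=0$ only when $D=\R^d$. For $D=[0,\pi]$ or a bounded $C^\infty$-domain, the term
$b_0\int_0^t\int_D G_D(t-s;x,y)\,\sigma(u(s,y))\,\dd s\,\dd y$
survives and its continuity in $t$ is not automatic, since $\sigma(u)$ is only known to have uniformly bounded $p$-th moments. The paper spends the last part of its proof on exactly this: it takes $1<r<1+\frac2d$, applies Hölder's inequality together with the moment bounds \eqref{p-moment}/\eqref{moment-D} and the estimate $\int_0^t\int_D|G_D(t-s;x,y)-G_D(t'-s;x,y)|\,\dd s\,\dd y\I C|t-t'|^{\gamma}$ from \cite[Proposition~5]{sanzsole03}, and concludes by Kolmogorov's continuity criterion. (For $D=[0,\pi]$ one could alternatively invoke Lemma~\ref{driftinterval}, but for a general bounded domain some such argument is required.) Adding this step would complete your proof.
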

We need the following elementary lemma.
\begin{lem}\label{maxheatkernel} If $g$ is the heat kernel \eqref{heat-kernel}, then there exists $C>0$ such that for every $T>0$, 
	$$\sup_{t\in [0,T]} g(t,x) = \begin{cases}  C T^{-{\frac d 2}} e^{-\frac{|x|^2}{4T}}\, , &  \text{if } T<\frac{|x|^2}{2d}\, ,\\ C|x|^{-d}\, ,& \text{if } T\s \frac{|x|^2}{2d}\, .\end{cases}$$
\end{lem}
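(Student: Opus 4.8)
The plan is to reduce this to a one-variable calculus problem. Fix $x\in\R^d$. If $x=0$, then $g(t,0)=(4\pi t)^{-\frac d2}\to+\infty$ as $t\downarrow0$, so $\sup_{t\in[0,T]}g(t,0)=+\infty$, which agrees with the claimed formula (here $T\s\frac{|x|^2}{2d}=0$, and $C|x|^{-d}=+\infty$). So assume $x\neq0$ and set
$$\phi(t):=g(t,x)=(4\pi)^{-\frac d2}t^{-\frac d2}e^{-\frac{|x|^2}{4t}}\,,\qquad t>0\,,$$
which is smooth and strictly positive on $(0,\infty)$ with $\phi(t)\to0$ both as $t\downarrow0$ and as $t\to\infty$.

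First I would compute the derivative. A short calculation yields
$$\phi'(t)=(4\pi)^{-\frac d2}\,t^{-\frac d2-2}\,e^{-\frac{|x|^2}{4t}}\lp\frac{|x|^2}{4}-\frac d2\,t\rp\,,$$
so $\phi'(t)$ has the sign of $\frac{|x|^2}{2d}-t$. Hence $\phi$ is strictly increasing on $\big(0,\frac{|x|^2}{2d}\big]$ and strictly decreasing on $\big[\frac{|x|^2}{2d},\infty\big)$, and therefore attains its unique global maximum at $t^\ast:=\frac{|x|^2}{2d}$, with
$$\phi(t^\ast)=(4\pi)^{-\frac d2}\lp\frac{2d}{|x|^2}\rp^{\frac d2}e^{-\frac d2}=\lp\frac{d}{2\pi e}\rp^{\frac d2}|x|^{-d}\,.$$

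Then I would split into the two cases. If $T\s t^\ast$, the global maximizer $t^\ast$ lies in $[0,T]$, so $\sup_{t\in[0,T]}g(t,x)=\phi(t^\ast)=C|x|^{-d}$ with $C=(d/(2\pi e))^{\frac d2}$. If $T<t^\ast$, then $\phi$ is increasing throughout $[0,T]$, so the supremum is attained at the right endpoint and equals $\phi(T)=(4\pi)^{-\frac d2}T^{-\frac d2}e^{-\frac{|x|^2}{4T}}=CT^{-\frac d2}e^{-\frac{|x|^2}{4T}}$ with $C=(4\pi)^{-\frac d2}$; as elsewhere in the paper, the constant $C$ is permitted to take different values in the two cases. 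There is essentially no obstacle here; the only point deserving a word is that $t^\ast$ is a genuine global (not merely local) maximum, which follows from the monotonicity just established together with the boundary behaviour $\phi(0^+)=\phi(+\infty)=0$.
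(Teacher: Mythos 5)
Your proof is correct: the paper states this lemma without proof (calling it "elementary"), and your one-variable calculus argument — locating the unique critical point $t^\ast=\frac{|x|^2}{2d}$ of $t\mapsto g(t,x)$ and splitting according to whether $T$ reaches it — is exactly the intended verification, with the correct explicit constants $(4\pi)^{-\frac d2}$ and $\big(\tfrac{d}{2\pi e}\big)^{\frac d2}$ in the two cases. Your remark that $C$ takes different values in the two branches is consistent with the paper's stated convention that $C$ may change from line to line, and only the upper-bound direction is used in the applications anyway.
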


\begin{proof}[Proof of Theorem~\ref{fixedspacewholespace}]
	Again, by a stopping time argument, it suffices to show the regularity of $u_N$ for any $N\s 1$, as defined in \eqref{truncatedmildinterval}, \eqref{truncatedmild} or \eqref{uN-D}, respectively. We only consider $N=1$, and decompose $u=V+u^{1,1}+u^{1,2}+u^{2,1}+u^{2,2}+u^3$ as in the part ``$0<p\I 1$'' of the proof of Theorem~\ref{continuityspacewholespace} (with $A>0$ such that $x\in(-A,A)^d$ and $D\subset (-A,A)^d$ if $D$ is bounded). There we explained that $V$, $u^{1,2}\mathds 1_{[-A,A]^d}$ and $u^{2,2}\mathds 1_{[-A,A]^d}$ are jointly continuous. The term $u^{2,1}$ is again a finite sum of weighted heat kernels, so $t\mapsto u(t,x)$ is smooth because none of the jumps falls exactly on a given $x\in D$. Moreover,  for $u^{1,1}$, we can use \cite[Théorème 2.2.2]{loubert} in the case $D=\R^d$ because $\E[|\sigma(u(t,x))|^p] = \E[|\sigma(u_1(t,x))|^p]$ is uniformly bounded on $[0,T]\times [-2A,2A]^d$. The proof also applies to bounded $D$ because the heat kernel $G_D$ is majorized by a multiple of the heat kernel on $\R^d$ by \cite[Corollary~3.2.8]{davies}. Finally, since $p<1$ and $b_0=0$ if $D=\R^d$, $u^3$ only needs to be considered for bounded $D$. With $1<r<1+\frac 2 d$ and $0\I t' < t \I T$, we can use Hölder's inequality, the moment bound \eqref{p-moment} or \eqref{moment-D} and \cite[Proposition~5]{sanzsole03} (and the proof therein) to deduce for every $\gamma\in(0,1)$, %In this case, $u^3=u^{3,1}$ as defined in \eqref{u3-split}, and if $u^3_n$ denotes the process introduced after \eqref{u3-split}, then for every $x\in D$, $t\mapsto u^3_n(t,x)$ is continuous by \cite[Proposition 5]{sanzsole03}, and as $n\to+\infty$, we have
%	\[ \sup_{t\in[0,T]} |u^{3,1}(t,x)-u^{3,1}_n(t,x)| \I  \int_0^T \int_D \sup_{t\in[0,T]} G_D(t-s;x,y) |\sigma_{(n)}(s,y)|\,\dd s\,\dd y \to 0 \]
%	by dominated convergence because by \cite[Corollary 3.2.8]{davies} and Lemma~\ref{maxheatkernel}
%	\begin{align*} &\E\left[ \left|\int_0^T \int_D \sup_{t\in[0,T]} G_D(t-s;x,y) \sigma_{(n)}(s,y)\,\dd s\,\dd y\right| \right] \I \int_0^T \int_D \sup_{t\in[0,T]} G_D(t-s;x,y) \sigma_{(n)}(s,y)\,\dd s\,\dd y\\
%	&\qquad \I C \int_0^T \int_{\R^d} \sup_{t\in[0,T]} g(t-s,y)\,\dd s\,\dd y \I C  \int_0^T \int_{\R^d} \left(e^{-\frac{|x|^2}{4T}} + |x|^{-d}\mathds 1_{|x|\I \sqrt{2dT}} \right)\,\dd s\,\dd y  \end{align*}
\begin{align*}
	\E[|u(t,x)-u(t',x)|^r] &\I \left(\int_0^t \int_D |G_D(t-s;x,y)-G_D(t'-s;x,y)|\,\dd s\,\dd y\right)^{r-1}\\
	&\quad\qquad \times \int_0^t \int_D |G_D(t-s;x,y)-G_D(t'-s;x,y)| \E[|\sigma(u(s,y))|^r] \,\dd s\,\dd y\\
	&\I \left(\int_0^t \int_D |G_D(t-s;x,y)-G_D(t'-s;x,y)|\,\dd s\,\dd y\right)^r \I C|t-t'|^{\gamma r}
\end{align*}
Thus, by choosing $\gamma$ close to $1$, the claim follows from Kolmogorov's continuity theorem.
\end{proof}

\begin{theo}\label{alphafixedspace}
	If $\sigma\equiv1$ and $\nu$ satisfies \eqref{nu-alpha} for some $\alpha\in[1,1+\frac 2 d)$, $\delta>0$, and some measurable $f\colon [-\delta,\delta]\to [0,+\infty)$ with $f(0)\neq 0$ and \eqref{almostalphabdd} for some $0<r<1$, then
	for any $x\in D$, the process $t\mapsto u(t,x)$ is unbounded on any non-empty open interval in $[0,T]$ with probability $1$. 
\end{theo}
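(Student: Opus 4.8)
The plan is to adapt the proof of the spatial analogue, Theorem~\ref{label}, by replacing the supremum over $x$ with one over $t$. First I would fix $x\in D$ and a non-empty open interval $I=(t_1,t_2)\subseteq[0,T]$; since every open interval contains a rational one and there are only countably many of those, it is enough to show that almost surely $t\mapsto u(t,x)$ is unbounded on $I$. Because $V$ and the drift terms of the mild formula are jointly continuous (Lemmas~\ref{driftinterval} and~\ref{drift-bdd}) and $G_D(\tau;x,y)=0$ for $\tau<0$, this reduces to proving the unboundedness on $I$ of the stochastic convolution
\[ Y(t)=\int_0^t\int_D G_D(t-s;x,y)\,L(\dd s,\dd y)=\int_0^T\int_D G_D(t-s;x,y)\,L(\dd s,\dd y)\,. \]

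Next I would treat the $\alpha$-stable case (i.e.\ $f\equiv f(0)$ on $[-\delta,\delta]$ and $L$ symmetric $\alpha$-stable). For each fixed $t$ one has $G_D(t-\cdot\,;x,\cdot)\in L^\alpha([0,T]\times D)$ exactly because $\alpha<1+\tfrac2d$ (since $\int_0^T\!\int_{\R^d}g(\tau,z)^\alpha\,\dd z\,\dd\tau\asymp\int_0^T\tau^{-d(\alpha-1)/2}\,\dd\tau<+\infty$), so $(Y(t)\colon t\in I)$ is an $\alpha$-stable process of the form \cite[(10.1.1)]{taqqu} with control measure $\dd s\,\dd y$ and kernels $f_t(s,y)=G_D(t-s;x,y)$. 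I would then verify that the necessary condition \cite[(10.2.14)]{taqqu} for bounded sample paths in \cite[Theorem~10.2.3]{taqqu} fails, i.e.\ that $\int_0^T\!\int_D\bigl(\sup_{t\in I}G_D(t-s;x,y)\bigr)^\alpha\,\dd s\,\dd y=+\infty$. This is where the argument genuinely differs from the spatial case. Fixing $\rho>0$ with $B_x(\rho)\subseteq D$ and using the Gaussian lower bound $G_D(\tau;x,y)\s Cg(\tau,x-y)$ for $\tau\in(0,T]$, $y\in B_x(\rho)$ (\cite[Theorem~2 and Lemma~9]{gaussianlowerbound}; trivially if $D=\R^d$), for $y\in B_x(\rho)$ with $|x-y|^2\I d(t_2-t_1)$ and $s\in(t_1,\,t_2-\tfrac{|x-y|^2}{2d})$ the time $t=s+\tfrac{|x-y|^2}{2d}$ lies in $I$, and by Lemma~\ref{maxheatkernel} the map $\tau\mapsto g(\tau,x-y)$ attains there its maximum, a constant multiple of $|x-y|^{-d}$. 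Since for such $y$ the admissible set of $s$ has length $\s(t_2-t_1)/2$, the double integral is bounded below by $C\int_{\{|x-y|<r_0\}}|x-y|^{-d\alpha}\,\dd y$ with $r_0=\min(\rho,\sqrt{d(t_2-t_1)})$, which is a constant times $\int_0^{r_0}\varrho^{\,d-1-d\alpha}\,\dd\varrho$ and hence infinite precisely because $\alpha\s1$ forces $d-1-d\alpha\I-1$. Thus $t\mapsto Y(t)$, and therefore $t\mapsto u(t,x)$, is a.s.\ unbounded on $I$.

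For general $f$, I would carry out the same reduction as at the end of the proof of Theorem~\ref{label}: decompose $\nu=\nu_1-\nu_2+\nu_3+\nu_4$ with $\nu_3=f(0)|z|^{-\alpha-1}\mathds 1_{[-\delta,\delta]}(z)\,\dd z$, $\nu_{1,2}$ built from $(f(z)-f(0))_\pm|z|^{-\alpha-1}\mathds 1_{[-\delta,\delta]}$, and $\nu_4=\nu\mathds 1_{\{|z|>\delta\}}$, and decompose $L$ accordingly into independent pieces, so that $u=u_1-u_2+u_3+u_4$ up to a continuous correction coming from $V$. By \eqref{almostalphabdd} the noises with Lévy measures $\nu_1,\nu_2$ have Blumenthal--Getoor index $\I r<1$, and $\nu_4$ is finite, so $t\mapsto u_1(t,x),u_2(t,x),u_4(t,x)$ each has a continuous modification by Theorem~\ref{fixedspacewholespace} (the drifts contributing continuous terms by Lemma~\ref{drift-bdd}). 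Meanwhile $\alpha<1+\tfrac2d$ guarantees that $u_3$ exists, and, since the truncated $\alpha$-stable noise with Lévy measure $\nu_3$ agrees with a pure $\alpha$-stable noise after removing its big jumps (a compound Poisson noise with continuous solution-contribution), the previous step shows that $t\mapsto u_3(t,x)$ is unbounded on $I$. Combining, $t\mapsto u(t,x)$ is a.s.\ unbounded on $I$, and intersecting over rational intervals yields the claim for every non-empty open interval.

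The main obstacle is twofold. First, getting the divergence estimate exactly right: unlike in Theorem~\ref{label}, where the blow-up originated from the time-integral $\int_0^t(t-s)^{-\alpha d/2}\,\dd s$ and produced the threshold $\alpha=\tfrac2d$, here it comes from integrating the spatial singularity $|x-y|^{-d\alpha}$ of the \emph{maximal} heat kernel near $y=x$, which is precisely why the critical exponent becomes $\alpha=1$; one has to be careful that the supremum in $t$ is indeed realized at the interior maximizing time and that the Gaussian lower bound holds uniformly up to $\partial D$. Second, at a more technical level, justifying the independent decomposition of the noise in the general-$f$ step and the passage from ``no bounded version exists'' in \cite[Theorem~10.2.3]{taqqu} to ``this particular version is a.s.\ unbounded on $I$'' (via a zero--one law for stable integrals, with the supremum taken over a countable dense subset of $I$) --- both of which are already present in the proof of Theorem~\ref{label}.
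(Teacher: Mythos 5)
Your proposal is correct and follows essentially the same route as the paper: reduce to the stochastic convolution, verify that the necessary condition \cite[(10.2.14)]{taqqu} for sample boundedness fails by combining the Gaussian lower bound of \cite{gaussianlowerbound} with Lemma~\ref{maxheatkernel} to produce the non-integrable spatial singularity $|x-y|^{-d\alpha}$ (divergent exactly for $\alpha\s 1$), and then handle general $f$ by the same four-fold decomposition of $\nu$ used in Theorem~\ref{label}. The only cosmetic difference is that the paper performs a change of variables to write the inner supremum as $\sup_{v\in[0,s]}g(v,x-y)$ rather than tracking the maximizing time $t=s+\frac{|x-y|^2}{2d}$ inside $I$ explicitly, but the two computations are equivalent.
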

\begin{proof} The proof is analogous to the proof of Theorem~\ref{label}. It suffices to show that
	\begin{equation}
		\label{necessarycondition3}
		\int_{t_1}^{t_2}\int_{D}  \lp \sup_{t\in [t_1, t_2]}  G_D(t-s;x,y) \rp^\alpha\,\dd s \, \dd y =+\infty
	\end{equation}
for $x\in D$ and $0\I t_1 < t_2 \I T$.
By \cite[Theorem~2 and Lemma~9]{gaussianlowerbound}, it suffices to consider $D=\R^d$. In this case,
the integral above is bounded from below by 
	\begin{align*}
		\int_{t_1}^{t_2}\int_{\R^d} \sup_{t\in [t_1,t_2]}  g(t-s,x-y)^\alpha\, \dd s \, \dd y \s \int_{0}^{t_2-t_1}\int_{\R^d} \sup_{v\in [0,s]}  g(v,x-y)^\alpha \,\dd s \, \dd y \, ,
	\end{align*}
	so \eqref{necessarycondition3} follows from Lemma~\ref{maxheatkernel}:
	$$ \int_{t_1}^{t_2}\int_{\R^d} \lp \sup_{t\in [t_1, t_2]}  g(t-s,x-y) \rp^\alpha \dd s \, \dd y \s \int_0^{t_2-t_1} \int_{|x-y| \I \sqrt{2ds}}  \frac{C}{|x-y|^{d\alpha}} \dd s \, \dd y=+\infty\, .$$
\end{proof}

\begin{rem}\label{remalpha1} In contrast to the results on regularity in space, the critical exponent $p=1$ for temporal regularity does not depend on the dimension $d$. In particular, for all $d\s 1$, we obtain continuity of $t\mapsto u(t,x)$, $x$ fixed, for any (tempered) $\alpha$-stable noise with $\alpha\in (0,1)$ (and $b_0=0$ if $D=\R^d$), any (variance-)gamma noise and inverse Gaussian noise. In the case of additive noise, $t\mapsto u(t,x)$ is almost surely unbounded on any non-empty open subinterval for (tempered) $\alpha$-stable noises with $\alpha\in[1,1+\frac 2 d)$ and normal inverse Gaussian noises.
\end{rem}

\appendix

\section{Appendix: Integration with respect to random measures}\label{appB}

Just as Lévy processes are special instances of semimartingales, 
a Lévy noise as in \eqref{noise} is a random measure as introduced in \cite{bichteler_jacod}. We give a short introduction into the integration theory associated to it, hereby concentrating on the main ideas and results that we need for our purposes. All details not mentioned or explained can be found in \cite{bichteler_jacod, chong_integrability, lebedev2, lebedev}. Given a filtered probability space $(\Omega,\F,(\F_t)_{t\in [0,T]},\bbp)$ satisfying the usual conditions, consider a Polish space $E$ (e.g., $E=D$, the spatial domain in \eqref{SHE}), and denote by $\calp$ the $\sigma$-field $\calp_0\otimes \mathcal B(E)$, where $\calp_0$ is the usual predictable $\sigma$-field. With an abuse of terminology, $\calp$-measurable mappings from $\tilde \Omega :=\Omega\times [0,T]\times E$ to $\R$ are again called \emph{predictable} and their collection again denoted by $\calp$.

Given a sequence $(\tilde \Omega_k)_{k\s 1}$ in $\calp$ satisfying $\tilde\Omega_k \uparrow \tilde\Omega$, a mapping $M\colon \calp_M:=\bigcup_{k\s 1} \calp|_{\tilde\Omega_k} \to L^p$ where $p\in[0,+\infty)$ is called an \emph{$L^p$-random measure} if for every sequence $(A_i)_{i\s 1}$ of pairwise disjoint sets in $\calp_M$ with $\bigcup_{i\s1} A_i\in \calp_M$, we have $M(\bigcup_{i\s1} A_i) = \sum_{i\s1} M(A_i)$ in $L^p$, and some additional  ``$(\F_t)_{t\in[0,T]}$-adaptedness'' conditions are satisfied. In our example of a Lévy noise on $[0,T]\times D$, we can take $\tilde\Omega_k = D$ if $D$ is bounded, and $\tilde \Omega_k = [-k,k]^d$ if $D=\R^d$.

The stochastic integral of a \emph{simple integrand} of the form $S=\sum_{i=1}^r a_i \mathds 1_{A_i}$, where $r\in\N$, $a_i\in\R$ and $A_i\in\calp_M$, is defined in the canonical way by 
\[ \int_0^T\int_E S(t,x)\,M(\dd t,\dd x) := \sum_{i=1}^r a_i M(A_i)\,. \]
Denoting by $\cals_M$ the collection of such simple integrands, the extension of the integral to a larger subset of $\calp$ is carried out using the \emph{Daniell mean} 
\begin{equation}\label{Daniell} \|H\|_{M,p} :=\|H\|_{M,p,\bbp}:= \sup_{S\in\cals_M, |S|\I |H|} \left\| \int_0^T\int_E S(t,x)\,M(\dd t,\dd x) \right\|_{L^p}\, ,\quad H\in\calp\,. \end{equation}
 A predictable process $H$ is called \emph{$p$-integrable with respect to $M$} if there exists a sequence $(S_n)_{n\s 1}$ of simple integrands with $\| H-S_n \|_{M,p} \to 0$ as $n\to+\infty$. The collection of $p$-integrable processes is denoted by $L^{1,p}(M)$ (or $L^{1,p}(M,\bbp)$ if we want to emphasize the probability measure). The stochastic integral of $H$ with respect to $M$ is then defined as the $L^p$-limit of $\int_0^T\int_E S_n(t,x)\,M(\dd t,\dd x)$ (which exists and does not depend on the choice of $S_n$). In all notions introduced, the prefix $p$ is suppressed if $p=0$.
The constructed integral obeys the dominated convergence theorem, see \cite[(2.6)]{bichteler_jacod}.
\begin{theo}\label{domcon}
	If $(H_n)_{n\s 1}$ are predictable and converge pointwise to $H$, and $|H_n|\I H_0$ for all $n\s 1$ and some $H_0\in L^{1,p}(M)$, then $H_n, H \in L^{1,p}(M)$ and $\|H-H_n\|_{M,p}\to 0$ as $n\to+\infty$.
\end{theo}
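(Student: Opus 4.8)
The plan is to reduce the statement to a single structural fact about the Daniell mean defined in \eqref{Daniell}, namely its \emph{Daniell continuity}: whenever $(G_n)_{n\s1}\subset\calp$ satisfies $G_n\downarrow 0$ pointwise on $\tilde\Omega$ and $\|G_1\|_{M,p}<+\infty$, one has $\|G_n\|_{M,p}\to 0$. This is precisely the property that makes $\|\cdot\|_{M,p}$ a \emph{mean} in the sense of \cite{bichteler_jacod}, and it is proved there for the Daniell mean attached to an arbitrary $L^p$-random measure; I would simply invoke it. Besides this, I only use that $\|\cdot\|_{M,p}$ is monotone, i.e.\ $|H'|\I|H|$ implies $\|H'\|_{M,p}\I\|H\|_{M,p}$, which is immediate from \eqref{Daniell} because the supremum there is taken over all $S\in\cals_M$ with $|S|\I|H|$.

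First I would check that $H$ and every $H_n$ lies in $L^{1,p}(M)$, i.e.\ that $L^{1,p}(M)$ is solid; this is itself a consequence of Daniell continuity. For fixed $n$ and $k,m\s1$, the truncation $H_n^{(k,m)}:=\mathrm{sgn}(H_n)\,(|H_n|\wedge m)\,\mathds 1_{\tilde\Omega_k}$ is a bounded predictable process with support in $\tilde\Omega_k$, hence belongs to $L^{1,p}(M)$ by the construction of the integral (see \cite{bichteler_jacod}). Pointwise one has $|H_n-H_n^{(k,m)}|\I (H_0-m)_+ + H_0\,\mathds 1_{\tilde\Omega_k^c}=:R_{k,m}$, and $R_{k,m}\I H_0$ decreases to $0$ as $k,m\to+\infty$; since $\|H_0\|_{M,p}<+\infty$, Daniell continuity together with monotonicity gives $\|H_n-H_n^{(k,m)}\|_{M,p}\I\|R_{k,m}\|_{M,p}\to 0$. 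As $L^{1,p}(M)$ is $\|\cdot\|_{M,p}$-closed, being by definition the closure of $\cals_M$, it follows that $H_n\in L^{1,p}(M)$, and the same argument with $H$ in place of $H_n$ (using $|H|\I H_0$) yields $H\in L^{1,p}(M)$.

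For the convergence, set $G_n:=\sup_{m\s n}|H_m-H|$. Each $G_n$ is predictable, being a countable supremum of predictable processes; the sequence $(G_n)_{n\s1}$ is non-increasing; and $G_n\downarrow 0$ pointwise on $\tilde\Omega$ because $H_m\to H$ pointwise. Moreover $G_1\I 2H_0$, so $\|G_1\|_{M,p}\I\|2H_0\|_{M,p}<+\infty$ since $2H_0\in L^{1,p}(M)$. Daniell continuity then gives $\|G_n\|_{M,p}\to 0$, and since $|H-H_n|\I G_n$ pointwise, monotonicity yields $\|H-H_n\|_{M,p}\I\|G_n\|_{M,p}\to 0$, which is the assertion. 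The only genuinely non-formal ingredient is the Daniell continuity of the mean itself; that it holds for the Daniell mean of a random measure is the substance of the Bichteler--Jacod integration theory, and the theorem above is merely its transcription to the space $L^{1,p}(M)$. Accordingly, the main obstacle is not in the assembly sketched here but lies upstream, in that property — which, fortunately, is available to us as a cited result.
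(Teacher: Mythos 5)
The paper does not actually prove this statement: it is quoted verbatim from the integration theory of Bichteler and Jacod, with the proof delegated entirely to the citation \cite[(2.6)]{bichteler_jacod}. Your proposal therefore does strictly more than the paper. Your assembly is correct: the truncation argument showing $H_n,H\in L^{1,p}(M)$ works (note only that $R_{k,m}\I 2H_0$, not $\I H_0$, which is harmless since all that matters is $\|2H_0\|_{M,p}<+\infty$), and the reduction of the convergence claim to the monotone sequence $G_n=\sup_{m\s n}|H_m-H|$ together with the solidity of the mean is the standard Daniell-scheme derivation. The one place to be careful is the ingredient you isolate as ``Daniell continuity'': the functional in \eqref{Daniell} is the supremum over dominated simple integrands, not the usual up--down (starred) Daniell mean, and its continuity along \emph{arbitrary} predictable sequences decreasing to zero with finite initial mean is not an axiom of a mean but a theorem — it is exactly where the $\sigma$-additivity of $M$ in $L^p$ enters, and in the Bichteler--Jacod development it is established essentially alongside, and at the same depth as, the dominated convergence theorem itself. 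So your reduction is honest (and you say as much), but the load-bearing step is the same one the paper outsources; the net effect is that your proof and the paper's citation rest on the same external fact, with yours making the formal bookkeeping around it explicit.
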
 

 In this paper, we are particularly interested in the case where $M$ is a linear combination of random measures of one of the following forms:
 \begin{enumerate}
 	\item[(a)] $M$ is a \emph{predictable strict random measure}, that is, almost every realization of $M$ is a measure on $[0,T]\times E$ and $t\mapsto \int_0^T \int_E \mathds 1_A(s,y) \mathds 1_{[0,t]}(s)\,M(\dd s,\dd y)$ is a predictable process for all $A\in\calp_M$.
 	\item[(b)] $M(\dd t,\dd x) = \int_{z\in \R} W(t,x,z)\,\tilde J(\dd t,\dd x,\dd z)$, where $J$ is an $(\F_t)_{t\in[0,T]}$-Poisson random measure with intensity measure $\nu(\dd t,\dd x,\dd z)$, $\tilde J = J-\nu$, and $W\mathds 1_{\tilde\Omega_k}$ is $1$-integrable with respect to $\tilde J$ (a random measure on $E\times\R$) in the sense above for every $k\s 1$.
 	\item[(c)] $M$ is a strict random measure of the form $M(\dd t,\dd x) = \int_{z\in\R} W(t,x,z)\,J(\dd t,\dd x,\dd z)$ where $W\mathds 1_{\tilde\Omega_k}$ is integrable with respect to $J$ for every $k\s 1$.
 \end{enumerate}
 
In these cases, the Daniell mean can be computed (or estimated) explicitly.
 \begin{lem}\label{Daniell-explicit} Let $\|X\|_{L^p} = \E[|X|^p]$ for $0<p<1$ and $\|X\|_{L^p} = (\E[|X|^p])^{\frac 1 p}$ be the usual $L^p$-norm for $p\s 1$.
 	\begin{enumerate}
 			\item In the case (a) above, we have  for every $0<p<+\infty$ and $H\in\calp$,
 		\begin{equation}\label{finitevarcase} \|H\|_{M,p} = \left\| \int_0^T \int_E |H(t,x)|\,|M|(\dd t,\dd x) \right\|_{L^p}\, , \end{equation}
 		where $|M|$ is the total variation measure of $M$.
 		\item  		 In the case (b) above, there exist for every $p\s 1$ constants $c_p, C_p>0$ such that for all $H\in\calp$,
 		\[ c_p \left\| \left( \int_0^T \int_E H^2(t,x)\,[M](\dd t,\dd x) \right)^{\frac 1 2} \right\|_{L^p} \I \|H\|_{M,p}\I C_p  \left\| \left( \int_0^T \int_E H^2(t,x)\,[M](\dd t,\dd x) \right)^{\frac 1 2} \right\|_{L^p}\,, \]
 		where $[M](\dd t,\dd x) = \int_{z\in \R} W^2(t,x,z)\,J(\dd t,\dd x,\dd z)$ is the quadratic variation measure of $M$. In particular,
 		\begin{equation}\label{p-est} \|H\|_{M,p}\I C_p \lp\int_0^T\int_E\int_\R \| H(t,x) W(t,x,z)\|_{L^p}^p\,\nu(\dd t,\dd x,\dd z)\rp^{\frac 1 p}\,. \end{equation}
 		\item In the case (c) above, we have for $0<p\I 1$ and $H\in\calp$,
 		\[ \|H\|_{M,p} \I \int_0^T \int_E \int_\R \| H(t,x)W(t,x,z)\|_{L^p}\, \nu(\dd t,\dd x,\dd z)\,. \]
 	\end{enumerate}
 \end{lem}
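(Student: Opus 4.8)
\textbf{Proof plan for Lemma~\ref{Daniell-explicit}.}

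The plan is to prove the three items by reducing everything to known facts from the integration theory in \cite{bichteler_jacod} about the Daniell mean of the three canonical building blocks: a predictable strict random measure, a compensated integral-form random measure, and an uncompensated one. First, for part (1), I would start with a nonnegative simple integrand $S=\sum_{i=1}^r a_i\mathds 1_{A_i}$ with pairwise disjoint $A_i\in\calp_M$ and $a_i\s 0$, for which $\int_0^T\int_E S\,\dd M = \sum a_i M(A_i) = \int_0^T\int_E S\,\dd|M|$ holds pathwise by the very definition of the total variation measure once $M$ is a strict random measure with a.e.-sign-definite behavior on a generating family; more carefully, one truncates to $\tilde\Omega_k$ and uses that $M|_{\tilde\Omega_k}$ is of finite variation. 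Taking the supremum over $0\I S\I |H|$ and using that $\int|S|\,\dd|M|\uparrow\int|H|\,\dd|M|$ by monotone convergence on the pathwise level, then applying the $L^p$-norm (which is monotone), gives ``$\I$'' in \eqref{finitevarcase}; the reverse inequality follows by approximating $|H|$ from below by simple functions and using Theorem~\ref{domcon} together with continuity of $\|\cdot\|_{L^p}$ along the approximating sequence. This is essentially \cite[Theorem~3.2 or the remarks around (3.x)]{bichteler_jacod}, and I would cite it rather than reprove the measure-theoretic details.

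For part (2), the two-sided bound $c_p\|(\int H^2\,\dd[M])^{1/2}\|_{L^p}\I\|H\|_{M,p}\I C_p\|(\int H^2\,\dd[M])^{1/2}\|_{L^p}$ is precisely the Bichteler--Jacod / Burkholder--Davis--Gundy-type inequality for stochastic integrals against the compensated random measure $\tilde J$, valid for $p\s 1$; here $[M](\dd t,\dd x)=\int_\R W^2(t,x,z)\,J(\dd t,\dd x,\dd z)$ is the quadratic variation, computed from the jumps of $M$. I would invoke \cite[Theorem~1]{marinelli} (or the corresponding statement in \cite{bichteler_jacod}) for this. To get \eqref{p-est} from the upper bound, I would bound $\E[(\int_0^T\int_E H^2\,\dd[M])^{p/2}]$: write $\int H^2\,\dd[M] = \int_0^T\int_E\int_\R H^2(t,x)W^2(t,x,z)\,J(\dd t,\dd x,\dd z)$, and then for $p\s 2$ use the elementary inequality $(\sum a_i)^{p/2}\I \sum a_i^{p/2}\cdot(\#)^{p/2-1}$ carefully — actually cleaner is to use $\E[(\int f\,\dd J)^{p/2}]\I \E[\int f^{p/2}\,\dd J] + (\E[\int f\,\dd\nu])^{p/2}$-type estimates; but the slick route is: for $1\I p\I 2$, by the subadditivity $(\sum a_i)^{p/2}\I\sum a_i^{p/2}$ applied pathwise to the (a.s.\ finitely supported on $\tilde\Omega_k$, then limit) jumps, one gets $\E[(\int H^2\,\dd[M])^{p/2}]\I\E[\int |HW|^p\,\dd J] = \int\int\int \E[|H(t,x)W(t,x,z)|^p]\,\nu(\dd t,\dd x,\dd z)$, and for general $p$ one combines this with the compensator estimate. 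I would simply quote the explicit $L^p$-estimate from \cite[Theorem~1]{marinelli} or \cite{chong_integrability} which gives \eqref{p-est} directly, since this estimate is already used repeatedly in the main text (e.g.\ in \eqref{firstcalc}).

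For part (3), with $M(\dd t,\dd x)=\int_\R W(t,x,z)\,J(\dd t,\dd x,\dd z)$ a strict (uncompensated) random measure, $|M|(\dd t,\dd x)\I\int_\R |W(t,x,z)|\,J(\dd t,\dd x,\dd z)$, so by part (1), $\|H\|_{M,p}\I\|\int_0^T\int_E\int_\R|H(t,x)W(t,x,z)|\,J(\dd t,\dd x,\dd z)\|_{L^p}$. For $0<p\I 1$ the map $x\mapsto x^p$ is subadditive, hence $\|\cdot\|_{L^p}$ (in the sense $\|X\|_{L^p}=\E[|X|^p]$ for $p<1$, which is a metric-subadditive functional) satisfies $\|\sum X_i\|_{L^p}\I\sum\|X_i\|_{L^p}$; approximating the $J$-integral by sums over the (finitely many, on $\tilde\Omega_k$) atoms and using the Campbell/Mecke formula $\E[\sum_{\text{atoms}}|HW|^p]=\int\int\int\E[|HW|^p]\,\nu$ gives $\|H\|_{M,p}\I\int_0^T\int_E\int_\R\|H(t,x)W(t,x,z)\|_{L^p}\,\nu(\dd t,\dd x,\dd z)$ after passing to the limit $k\to\infty$ by monotone convergence. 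The main obstacle I anticipate is the bookkeeping in passing from simple integrands on the truncating sets $\tilde\Omega_k$ to general predictable $H$ and then removing the truncation — i.e.\ making the monotone/dominated convergence arguments rigorous at the level of the Daniell mean rather than just pathwise — but since Theorem~\ref{domcon} and the approximation scheme defining $L^{1,p}(M)$ are already available, this is routine and I would compress it. The only genuinely substantive input is the BDG-type inequality of part (2), which I take as a black box from the cited literature.
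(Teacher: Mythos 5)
Your parts (2) and (3) are essentially the paper's own arguments: for (2) the paper observes that $t\mapsto\int_0^T\int_E S(s,y)\mathds 1_{[0,t]}(s)\,M(\dd s,\dd y)$ is a local martingale for simple $S$, applies the Burkholder--Davis--Gundy inequalities, and extends by dominated convergence, with \eqref{p-est} obtained by the same subadditivity-of-$x\mapsto x^{p/2}$ computation on the atoms of $[M]$ that you sketch; and (3) is exactly the estimate $\E[|\sum_i S(T_i,X_i)W(T_i,X_i,Z_i)|^p]\I\E[\sum_i|S(T_i,X_i)W(T_i,X_i,Z_i)|^p]$ via subadditivity for $p\I1$ followed by the Campbell formula. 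Deferring the BDG input to the literature is consistent with the paper's level of detail.

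The genuine gap is in part (1), in the direction ``$\s$'' of \eqref{finitevarcase}. Your starting claim, that $\int_0^T\int_E S\,\dd M=\int_0^T\int_E S\,\dd|M|$ holds pathwise for nonnegative simple $S$, is false for a signed strict random measure: $\sum_i a_iM(A_i)$ and $\sum_i a_i|M|(A_i)$ differ as soon as $M$ charges some $A_i$ negatively or with both signs, and no ``sign-definite behavior on a generating family'' is part of the hypotheses. Consequently your route to the lower bound --- approximating $|H|$ from below by simple functions and passing to the limit --- only produces $\int|H|\,\dd M$, not $\int|H|\,\dd|M|$, so the inequality $\|H\|_{M,p}\s\big\| \int_0^T\int_E|H|\,\dd|M|\big\|_{L^p}$ does not follow from your argument. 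The paper closes this with a device you are missing: it forms the measure $\mu(\dd\omega,\dd t,\dd x)=M(\omega,\dd t,\dd x)\,\bbp(\dd\omega)$ on $\calp$ and takes the Radon--Nikodym derivative $D$ of $\mu$ with respect to $|\mu|$, which is a \emph{predictable} process with $|D|\equiv1$ and $|M|(\omega,\dd t,\dd x)=D(\omega,t,x)\,M(\omega,\dd t,\dd x)$; then for $0\I S\I|H|$ one has $|SD|\I|H|$, hence $\big\|\int_0^T\int_E S\,\dd|M|\big\|_{L^p}=\big\|\int_0^T\int_E SD\,\dd M\big\|_{L^p}\I\|H\|_{M,p}$, and taking the supremum over such $S$ (the right-hand side of \eqref{finitevarcase} equals $\|H\|_{|M|,p}$ by dominated convergence) yields the missing direction. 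The essential point is that the sign correction must be predictable so that $SD$ is an admissible competitor in the Daniell mean, which is exactly what the Radon--Nikodym argument on the predictable $\sigma$-field provides. The ``$\I$'' direction of \eqref{finitevarcase}, and with it your part (3) which uses only that direction, is unaffected.
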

\begin{proof} For the first statement, the ``$\I$''-part follows from $$\left|\int_0^T\int_E S(t,x)\,M(\dd t,\dd x)\right| \I \int_0^T\int_E |S(t,x)|\,|M|(\dd t,\dd x) \I \int_0^T\int_E |H(t,x)|\,|M|(\dd t,\dd x)$$ for all $S$ with $|S|\I |H|$. For the ``$\s$''-part, observe that the right-hand side of \eqref{finitevarcase} equals $\|H\|_{|M|,p}$ by dominated convergence. Next, consider the measure $\mu(\dd \omega,\dd t,\dd x) = M(\omega,\dd t,\dd x)\,\bbp(\dd \omega)$ on $\calp$ and let $D(\omega,t,x)$ be its Radon--Nikodym derivative with respect to $|\mu|(\dd \omega,\dd t,\dd x)$. Then $D$ is predictable, $|D|\equiv 1$ and $|M|(\omega,\dd t,\dd x)=D(\omega,t,x)\,M(\omega,\dd t,\dd x)$. Hence,
	\begin{align*} & \sup_{S\in\cals_M, |S|\I |H|} \left\| \int_0^T\int_E S(t,x)\,|M|(\dd t,\dd x) \right\|_{L^p}= \sup_{S\in\cals_M, 0\I S\I |H|} \left\| \int_0^T\int_E S(t,x)\,|M|(\dd t,\dd x) \right\|_{L^p} \\ &\qquad= \sup_{S\in\cals_M, 0\I S\I |H|} \left\| \int_0^T\int_E S(t,x)D(t,x)\,M(\dd t,\dd x) \right\|_{L^p}\\ &\qquad \I \sup_{S\in\cals_M, 0\I S\I |H|} \left\| \int_0^T\int_E S(t,x)\,M(\dd t,\dd x) \right\|_{L^p} \I \|H\|_{M,p}\, , \end{align*}
	and \eqref{finitevarcase} is proved 
%	 follows from the dominated convergence theorem if we decompose $H=H^+-H^-$ into its positive and negative part and take a sequence $S_n\in\cals_M$ such that  $S_n^\pm \uparrow H$ pointwise.
	
	For the second statement, we observe that $t\mapsto \int_0^T\int_E S(s,y)\mathds 1_{[0,t]}(s)\,M(\dd s,\dd y)$ is a local martingale for all $S\in\cals_M$, see \cite[Proposition~4.9(b)]{bichteler_jacod}. So the statement for $S\in\cals_M$ follows from the Burkholder--Davis--Gundy inequalities. The general case is again a consequence of the dominated convergence theorem. Inequality \eqref{p-est} can be proved along the lines of the third statement, which we now establish.
	
	Let $(T_i,X_i,Z_i)_{i\s 1}$ be the points of $J$ in $[0,T]\times E\times \R$. Then, using $(x+y)^p \I x^p+y^p$ for $x,y>0$ and $0<p\I 1$, we get
	\begin{align*} &\E\left[ \left| \int_0^T \int_E S(t,x)\,M(\dd t,\dd x) \right|^p \right] = \E\left[ \left| \sum_{i\s 1} S(T_i,X_i)W(T_i,X_i,Z_i) \right|^p \right]\\
		&\qquad \I \E\left[  \sum_{i\s 1} | S(T_i,X_i)W(T_i,X_i,Z_i) |^p \right] \I \E\left[  \int_0^T \int_E \int_\R |H(t,x)W(t,x,z)|^p\,J(\dd t,\dd x,\dd z) \right] \\
	&\qquad =\E\left[  \int_0^T \int_E \int_\R |H(t,x)W(t,x,z)|^p\,\nu(\dd t,\dd x,\dd z) \right]  \,, \end{align*}
and the proof is complete.
\end{proof}
 
With the help of the Daniell mean, one can obtain the following stochastic Fubini theorem.
 \begin{theo}\label{fubini} Let $(A,\mathcal A,\mu)$ be a $\sigma$-finite measure space, $M$ be an $L^p$-random measure for some $p>0$, and $H$ be a $\calp\otimes \mathcal A$-measurable function.
 	\begin{enumerate} \item If $p\s 1$ and $\int_A \| H(\cdot,\cdot,a)\|_{M,p}\,\mu(\dd a)<+\infty$, then 
 	\begin{equation*} \int_A \lp \int_0^T \int_E H(t,x,a)\, M(\dd t,\dd x)\rp \,\mu(\dd a) \quad \text{and}\quad \int_0^T \int_E \lp \int_A H(t,x,a)\,\mu(\dd a) \rp \,M(\dd t,\dd x) \end{equation*}
 	are equal almost surely, and all integrals involved are well defined.
 	\item If $0<p\I 1$ and $M$ is a random measure as in (c) above, the conclusion of the first part continues to hold if 
 	\begin{equation}\label{p-est-2} \int_0^T\int_E \int_\R \left\| \int_A |H(t,x,a)|\,\mu(\dd a) |W(t,x,z)| \right\|_{L^p} \,\nu(\dd t,\dd x,\dd z)<+\infty\,. \end{equation}
 	\end{enumerate}
 \end{theo}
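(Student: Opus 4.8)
The plan is to treat the two parts by different means: for part~(1), with $M$ a general $L^p$-random measure and $p\s1$, I would rely on the Daniell-mean machinery of the appendix; for part~(2), with $M$ a strict random measure of type~(c), I would argue pathwise over the atoms of $J$. For part~(1) I would follow the classical two-step scheme. First, verify the identity for ``elementary'' integrands $H(\omega,t,x,a)=\mathds 1_{B}(\omega,t,x)\mathds 1_{A'}(a)$ with $B\in\calp_M$ and $A'\in\mathcal A$ of finite $\mu$-measure: by the definition of the integral of a simple integrand both iterated integrals collapse to $\mu(A')M(B)$, and the identity then extends to finite sums $\sum_j S_j(\omega,t,x)g_j(a)$, $S_j\in\cals_M$, by linearity. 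Second, extend to general $H$ by approximation using two estimates: (i) the ordinary Minkowski integral inequality in $L^p(\Omega,\bbp)$ (valid since $p\s1$), applied to the genuine $L^p(\Omega)$-valued map $a\mapsto\int_0^T\int_E H(t,x,a)\,M(\dd t,\dd x)$, which bounds the integral-outside iterated integral by $\int_A\|H(\cdot,\cdot,a)\|_{M,p}\,\mu(\dd a)$; and (ii) the ``continuous'' Minkowski bound
\[ \Big\| \int_A H(\cdot,\cdot,a)\,\mu(\dd a) \Big\|_{M,p} \I \int_A \| H(\cdot,\cdot,a)\|_{M,p}\,\mu(\dd a)\,, \]
which I would derive from the subadditivity, monotonicity and order-continuity of $\|\cdot\|_{M,p}$ together with Theorem~\ref{domcon}, and which guarantees that $\int_A H(\cdot,\cdot,a)\,\mu(\dd a)$ is a legitimate ($p$-integrable) integrand.

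With (i) and (ii) in hand, a functional monotone class argument extends the identity to all bounded $\calp\otimes\mathcal A$-measurable $H$ supported in $\tilde\Omega_k\times A_0$ with $\mu(A_0)<+\infty$: for $H_n\uparrow H$ of this form, the integral-outside sides converge in $L^p$ by (i), Theorem~\ref{domcon} applied $\mu$-a.e.\ in $a$, and dominated convergence over $(A_0,\mu)$, while the integral-inside sides converge by (ii) and Theorem~\ref{domcon}. A final truncation $H^{(n)}=H\mathds 1_{\{|H|\I n\}}\mathds 1_{\tilde\Omega_n\times A_n}$ with $A_n\uparrow A$ removes the boundedness and support restrictions, the passage to the limit being controlled once more by (i), (ii) and dominated convergence over $A$, which is legitimate precisely because $a\mapsto\|H(\cdot,\cdot,a)\|_{M,p}$ is $\mu$-integrable by hypothesis. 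Throughout, one works with jointly measurable versions of $(\omega,a)\mapsto\int_0^T\int_E H(t,x,a)\,M(\dd t,\dd x)$, which exist for elementary $H$ and persist under the $L^p$-limits taken.

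For part~(2) the idea is to bypass the Daniell mean. Since $M(\dd t,\dd x)=\int_\R W(t,x,z)\,J(\dd t,\dd x,\dd z)$ is strict, for any $G\in L^{1,0}(M)$ one has $\int_0^T\int_E G(t,x)\,M(\dd t,\dd x)=\sum_{i\s1}G(T_i,X_i)W(T_i,X_i,Z_i)$ over the atoms $(T_i,X_i,Z_i)$ of $J$, an absolutely convergent series. Writing $\bar H(t,x)=\int_A H(t,x,a)\,\mu(\dd a)$ and $\bar h(t,x)=\int_A|H(t,x,a)|\,\mu(\dd a)$ (both $\calp$-measurable by ordinary Fubini), the hypothesis, the estimate proved for Lemma~\ref{Daniell-explicit}(3), and the subadditivity of $X\mapsto\E[|X|^p]$ for $0<p\I1$ give
\[ \E\Big[\Big(\sum_{i\s1}\bar h(T_i,X_i)|W(T_i,X_i,Z_i)|\Big)^p\Big]\I\int_0^T\int_E\int_\R\big\|\bar h(t,x)W(t,x,z)\big\|_{L^p}\,\nu(\dd t,\dd x,\dd z)<+\infty\,, \]
so that $\sum_{i}\bar h(T_i,X_i)|W_i|=\int_A\big(\sum_i|H(T_i,X_i,a)W_i|\big)\,\mu(\dd a)<+\infty$ almost surely. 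In particular $\bar h\in L^{1,p}(M)$, hence $\bar H\in L^{1,0}(M)$, and, by a Fubini argument on $\Omega\times A$, for $\mu$-a.e.\ $a$ the series $\sum_i H(T_i,X_i,a)W_i$ is absolutely convergent and represents $\int_0^T\int_E H(\cdot,\cdot,a)\,M(\dd t,\dd x)$. Ordinary Fubini--Tonelli for the absolutely convergent sum/integral over $(i,a)$ then yields $\int_A\big(\sum_i H(T_i,X_i,a)W_i\big)\,\mu(\dd a)=\sum_i\bar H(T_i,X_i)W_i=\int_0^T\int_E\bar H(t,x)\,M(\dd t,\dd x)$, which is the claim together with the well-definedness of all integrals involved.

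The hard part is the extension step in part~(1), and specifically the continuous Minkowski inequality~(ii). Because $\|\cdot\|_{M,p}$ is defined only as a supremum over simple integrands, the usual duality proof is unavailable; instead I would reduce it, using the countable subadditivity and order-continuity of the Daniell mean together with the $\sigma$-finiteness of $\mu$ to approximate the parameter integral by countable sums of available discretizations, to the monotone situation covered by Theorem~\ref{domcon}. The accompanying joint-measurability bookkeeping in $(\omega,a)$ needed to make the outer $\mu$-integral meaningful is a further, though routine, technical point.
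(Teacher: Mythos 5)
Your proposal is correct in substance, but it is worth comparing it with what the paper actually does, because the two differ in an instructive way. For part (2) your argument coincides with the paper's: the paper disposes of this case in one sentence by invoking the ordinary Fubini theorem for the pathwise (strict) measure $M$ together with the Campbell-type computation from the proof of Lemma~\ref{Daniell-explicit}(3), which is exactly the chain of estimates you write down over the atoms $(T_i,X_i,Z_i)$. For part (1), however, the paper gives no proof at all: it cites \cite[Theorem~2]{lebedev} for $p=1$ and observes that the case $p>1$ follows because $\|\cdot\|_{M,1}\I\|\cdot\|_{M,p}$ (monotonicity of $L^p$-norms), so an $L^p$-random measure is in particular an $L^1$-random measure and the hypothesis for $p$ implies the hypothesis for $1$. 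You instead attempt a self-contained reconstruction (elementary product integrands, then extension via a continuous Minkowski inequality for the Daniell mean, a monotone class argument, and truncation), working with general $p\s 1$ directly rather than reducing to $p=1$. What your route buys is independence from the reference; what the paper's route buys is that the genuinely delicate point is delegated to Lebedev.

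That delicate point is precisely your inequality (ii), $\|\int_A H(\cdot,\cdot,a)\,\mu(\dd a)\|_{M,p}\I\int_A\|H(\cdot,\cdot,a)\|_{M,p}\,\mu(\dd a)$, together with the joint measurability of $(\omega,a)\mapsto\int_0^T\int_E H(t,x,a)\,M(\dd t,\dd x)$; these constitute essentially the entire content of the cited theorem, and your sketch of (ii) is the one place where the argument as written is not yet a proof. The inequality is immediate for integrands of product-simple form $\sum_j G_j(\omega,t,x)\mathds 1_{A_j}(a)$ by subadditivity and homogeneity of the Daniell mean, and it passes to increasing limits by order-continuity; but the class of $H$ for which it holds is \emph{not} obviously stable under the proper differences required by a Dynkin/monotone-class argument, since subadditivity gives the bound in the wrong direction for $\mathds 1_{C_1}-\mathds 1_{C_2}$. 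One needs a genuinely different mechanism here (e.g.\ strong measurability of $a\mapsto H(\cdot,\cdot,a)$ as a map into the seminormed space $(L^{1,p}(M),\|\cdot\|_{M,p})$, so that the parameter integral is a bona fide Bochner-type integral), which is what Lebedev's proof supplies. Since you flag this step yourself as the hard part, I would either fill it in along those lines or simply cite the reference as the paper does; everything else in your write-up is sound.
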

\noindent The first part has been proved in \cite[Theorem~2]{lebedev} for $p=1$ (see also \cite{Bichteler95} for processes indexed only by time), but is obviously also valid for $p>1$ by the monotonicity of $L^p$-norms. In particular, Lemma~\ref{Daniell-explicit} can be used to verify the integrability assumption. The second part follows from the ordinary Fubini theorem ($M$ is a strict random measure here) together with an argument as in the proof of third part of Lemma~\ref{Daniell-explicit}.

A last result that we need relates to the possibility of recovering $L^2$-integrability from $L^p$-integrability, $0\I p<2$, upon an equivalent change of probability measure. For semimartingales, this result is well known, see \cite[Chapter~IV, Theorem~34]{protter}, for example. For random measures, it is proved in \cite[Corollary of Theorem~2]{lebedev2}.
\begin{theo}\label{change}
	If $M$ is an $L^p$-random measure and $H\in L^{1,p}(M)$ for some $0\I p<2$, then there exists a probability measure $\Q$ that is equivalent to $\bbp$ on $\mathcal F$ such that $\frac{\dd \Q}{\dd \bbp}$ is bounded, $\frac{\dd \bbp}{\dd \Q} \in L^{\frac{p}{2-p}}(\bbp)$, $M$ is an $L^2$-random measure under $\Q$, and $H\in L^{1,2}(M,\Q)$.
\end{theo}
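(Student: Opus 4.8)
The plan is to imitate the classical semimartingale result \cite[Chapter~IV, Theorem~34]{protter}, whose essence is the construction of a single nonnegative $\F_T$-measurable ``size function'' $R$ that captures the Daniell mean of $H$. First I would reduce $M$, via the Bichteler--Jacod structure theory, to a sum $M=M^{(1)}+M^{(2)}$ of a predictable strict (hence finite-variation) random measure and a local-martingale-type random measure, and set
\[ R:=\int_0^T\int_E |H(t,x)|\,|M^{(1)}|(\dd t,\dd x)+\lp \int_0^T\int_E H^2(t,x)\,[M^{(2)}](\dd t,\dd x)\rp^{\frac12}\, . \]
By Lemma~\ref{Daniell-explicit} (parts (1) and (2)), the fact that $H\in L^{1,p}(M,\bbp)$ forces $\|H\|_{M^{(1)},p},\|H\|_{M^{(2)},p}<+\infty$, hence $\|R\|_{L^p(\bbp)}<+\infty$; in particular $R<+\infty$ almost surely. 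After enlarging $R$ slightly so that it also dominates the size functions of the generators $\mathds 1_{\tilde\Omega_k}$, $k\s1$ (which lie in $L^p(\bbp)$ because $M$ is an $L^p$-random measure), one still has $R\in L^p(\bbp)$.

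Next I would take
\[ \frac{\dd\Q}{\dd\bbp}:=c\,(1+R)^{-(2-p)}\, ,\qquad c:=\lp \E_\bbp\lc (1+R)^{-(2-p)}\rc \rp^{-1}\in(0,+\infty)\, , \]
which is well defined since $0<(1+R)^{-(2-p)}\I 1$. Then $\frac{\dd\Q}{\dd\bbp}\I c$ is bounded; $\frac{\dd\bbp}{\dd\Q}=c^{-1}(1+R)^{2-p}$ obeys $\E_\bbp\big[(\tfrac{\dd\bbp}{\dd\Q})^{\frac{p}{2-p}}\big]=c^{-\frac{p}{2-p}}\E_\bbp[(1+R)^p]<+\infty$, so $\frac{\dd\bbp}{\dd\Q}\in L^{\frac{p}{2-p}}(\bbp)$; and, using $R^2\I(1+R)^2$,
\[ \E_\Q\lc R^2\rc=c\,\E_\bbp\lc R^2(1+R)^{-(2-p)}\rc\I c\,\E_\bbp\lc (1+R)^p\rc<+\infty\, , \]
i.e.\ $R\in L^2(\Q)$; by construction the auxiliary size functions of the $\mathds 1_{\tilde\Omega_k}$ are in $L^2(\Q)$ as well.

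It then remains to establish that $R\in L^2(\Q)$ implies $\|H\|_{M,2,\Q}<+\infty$ (so $H\in L^{1,2}(M,\Q)$, since finiteness of the Daniell mean yields $L^{1,2}$-membership on a $\sigma$-finite setup), and, applied to the bounded integrands $\mathds 1_{\tilde\Omega_k}$, that $M$ is an $L^2$-random measure under $\Q$. The favourable feature is that $|M^{(1)}|$ and $[M^{(2)}]$ are \emph{pathwise} functionals, hence invariant under the equivalent change of measure, so the martingale part of the size function of $H$ relative to $M$ is still exactly the $[M^{(2)}]$-term of $R$, and one can try to bound $\|H\|_{M,2,\Q}$ by subadditivity of the Daniell mean together with the Burkholder--Davis--Gundy inequalities under $\Q$ for the martingale piece and Lemma~\ref{Daniell-explicit}(1) under $\Q$ for the strict piece.

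The genuinely delicate step — and the one I expect to be the main obstacle — is that $\frac{\dd\Q}{\dd\bbp}$ is only $\F_T$-measurable, so passing to $\Q$ changes the \emph{predictable characteristics} of $M$: by Girsanov's theorem the compensator of the jump part acquires a density $Y$, and the resulting finite-variation correction $\int W(Y-1)\,\nu$ is not a priori controlled by $R$. Reconciling this is precisely what the semimartingale argument of \cite{protter}, and its random-measure counterpart \cite[Corollary of Theorem~2]{lebedev2}, accomplishes: either by building $\Q$ iteratively, changing the measure in steps governed by stopped versions of $R$ and passing to the limit, or by invoking the fact — the random-measure analogue of \cite[Theorem~4.14]{bichteler_jacod}, already used in the proof of Theorem~\ref{cadlag_bdd_domain} — that once $\|H\|_{M,2,\Q}<+\infty$ holds, $H$ is automatically $L^2$-integrable against each of the $\Q$-Doob--Meyer parts of $M$, so that it is enough to bound the $\Q$-Daniell mean of $H$ against the fixed measures $M^{(1)},M^{(2)}$ directly, using the identity $\E_\Q[\int\!\int H^2W^2\,J]=\E_\Q[\int\!\int H^2W^2Y\,\nu]$ to absorb the compensator correction.
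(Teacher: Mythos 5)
The first thing to note is that the paper does not actually prove this statement: Theorem~\ref{change} is quoted from the literature, with \cite[Chapter~IV, Theorem~34]{protter} cited for the semimartingale case and \cite[Corollary of Theorem~2]{lebedev2} for the random-measure case. So there is no internal proof to match your argument against; the only fair comparison is whether your sketch would stand on its own.

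The elementary parts of your construction are fine: with $R\in L^p(\bbp)$ and $\frac{\dd\Q}{\dd\bbp}=c(1+R)^{-(2-p)}$ you correctly get boundedness of the density, $\frac{\dd\bbp}{\dd\Q}\in L^{\frac{p}{2-p}}(\bbp)$, and $\E_\Q[R^2]\I c\,\E_\bbp[(1+R)^p]<+\infty$. But the step from $R\in L^2(\Q)$ to $\|H\|_{M,2,\Q}<+\infty$ is not closed, and you say so yourself. The obstruction is real: the Daniell mean under $\Q$ is a supremum of $L^2(\Q)$-norms of $\int S\,\dd M$ over simple $|S|\I|H|$, and while $[M^{(2)}]$ is a pathwise object, the Burkholder--Davis--Gundy bound $\|\int S\,\dd M^{(2)}\|_{L^2(\Q)}\I C\|(\int S^2\,\dd[M^{(2)}])^{1/2}\|_{L^2(\Q)}$ is only valid for the \emph{$\Q$-martingale part} of $\int S\,\dd M^{(2)}$, not for $\int S\,\dd M^{(2)}$ itself; the difference is the Girsanov drift $\int SW(Y-1)\,\dd\nu$, whose $L^2(\Q)$-norm is not dominated by $R$ for any obvious reason. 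Your final paragraph identifies this precisely and then defers its resolution to \cite{protter} and \cite[Corollary of Theorem~2]{lebedev2} --- which is exactly the content of the theorem being proved, so as a self-contained argument the proposal has a genuine gap at its central step. (A secondary, repairable point: finiteness of $\|H\|_{M,2,\Q}$ gives $H\in L^{1,2}(M,\Q)$ only after an approximation argument showing simple integrands are dense for the mean; this is standard in the Bichteler--Jacod framework but should be cited rather than asserted.) In short, your proposal is a reasonable reconstruction of the \emph{strategy} behind Lebedev's theorem, but it does not replace the citation the paper relies on.
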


\subsection*{Acknowledgements}
We thank two anonymous referees for their careful reading of our manuscript.
 
\bibliography{biblio}
\bibliographystyle{myplain}
 
\end{document}